\renewcommand{\geq}{\geqslant}
\renewcommand{\leq}{\leqslant}
\newcommand{\<}{\langle}
\renewcommand{\>}{\rangle}
\newcommand{\norm}[1]{\left\Vert#1\right\Vert}
\newcommand{\J}{\mathcal J}
\newcommand{\co}{\operatorname{co}}
\newcommand{\cconv}{\overline{\co}}
\newcommand{\dent}{\operatorname{dent}}
\newcommand{\ext}{\operatorname{ext}}
\newcommand{\strexp}{\operatorname{str-exp}}
\newcommand{\strext}{\operatorname{str-ext}}
\newcommand{\diam}{\operatorname{diam}}
\newtheorem{thm}{Theorem}[section]
\newtheorem{theo}[thm]{Theorem}
\newtheorem{prop}[thm]{Proposition}
\newtheorem{coro}[thm]{Corollary}
\newtheorem{lema}[thm]{Lemma}
\theoremstyle{definition}
\newtheorem{defi}[thm]{Definition}
\newtheorem{ejem}[thm]{Example}
\newtheorem{rema}[thm]{Remark}
\numberwithin{equation}{section}
\def\fnote#1{\footnote}
\def\ignora#1{}
\def\n3#1{\left\vert  \! \left\vert \! \left\vert \, #1 \, \right\vert \!
  \right\vert \! \right\vert }
\begin{document}

\title[]{ Extremal structure in ultrapowers of Banach spaces}

\author[L. Garc\'ia-Lirola]{Luis C. Garc\'ia-Lirola }
\address[L. García-Lirola]{Departamento de Matemáticas, Universidad de Zaragoza, 50009, Zaragoza, Spain} \email{\texttt{luiscarlos@unizar.es}}

\author[G. Grelier]{ Guillaume Grelier }
\address[G. Grelier]{Universidad de Murcia, Departamento de Matem\'aticas, Campus de Espinardo 30100 Murcia, Spain} \email{g.grelier@um.es}

\author [A. Rueda Zoca]{ Abraham Rueda Zoca }\address[A. Rueda Zoca]{Universidad de Murcia, Departamento de Matem\'aticas, Campus de Espinardo 30100 Murcia, Spain} \email{ abraham.rueda@um.es}
\urladdr{\url{https://arzenglish.wordpress.com}}

\keywords{Ultraproduct; Extreme point; Denting point; Strongly exposed point; Uniform convexity;
Super weakly compact set}

\subjclass[2020]{Primary 46B08, 46B20; Secondary 46A55, 46B22}

\maketitle

\begin{abstract}
Given a bounded convex subset $C$ of a Banach space $X$ and a free ultrafilter $\mathcal U$, we study which points $(x_i)_\mathcal U$ are extreme points of the ultrapower $C_\mathcal U$ in $X_\mathcal U$. In general, we obtain that when $\{x_i\}$ is made of extreme points (respectively denting points, strongly exposed points) and they satisfy some kind of uniformity, then $(x_i)_\mathcal U$ is an extreme point (respectively denting point, strongly exposed point) of $C_\mathcal U$. We also show that every extreme point of $C_{\mathcal U}$ is strongly extreme, and that every point exposed by a functional in $(X^*)_{\mathcal U}$ is strongly exposed, provided that $\mathcal U$ is a countably incomplete ultrafilter.
Finally, we analyse the extremal structure of $C_\mathcal U$ in the case that $C$ is a super weakly compact or uniformly convex set.
\end{abstract}

\section{Introduction}

The ultraproduct of Banach spaces has shown to be a very useful tool in the study of local properties of Banach spaces. For instance, in \cite[Theorem 11.1.4]{AlKa} ultrapowers are used in order to prove that a Banach space $X$ fails to have type $p>1$ if, and only if, $\ell_1$ is finitely representable in $X$. This link between the local structure of a Banach space $X$ and the global one of its ultrapowers $X_\mathcal U$ has allowed us to obtain structural results in Banach spaces (we refer the interested reader to \cite{heinrichsurvey}).

More recent studies about  the geometry of ultraproduct Banach spaces can be found in \cite{hardtke} for octahedral and almost square Banach spaces or in \cite{bksw, kw} for the Daugavet property. Actually, the example of the Daugavet property is paradigmatic of two basic facts that, more often than not, appear when dealing with a geometric property in Banach spaces. The first one is that, when requiring an ultrapower $X_\mathcal U$ to enjoy a geometric property, one has to look for a ``uniform version'' of this geometric property in $X$ (this happens for instance with the Daugavet property and the uniform Daugavet property \cite[Theorem~6.4]{bksw}, for the strict convexity and uniform convexity or for the reflexivity and superreflexivity \cite[Proposition~6.4]{heinrichsurvey}). The second one is that one should
avoid as much as possible to deal with the dual of an ultrapower space (this is done in \cite{bksw}  by using their Theorem 6.2). The reason is that, in most of the cases (i.e. out of superreflexive Banach spaces \cite[Corollary~7.2]{heinrichsurvey}), there is not good access to the dual of $X_\mathcal U$.

Taking the above two facts in mind, the aim of this paper is to study the extremal structure of subsets of an ultrapower. This structure codifies much information of bounded convex sets (we can think for instance in Krein-Milman theorems) and it is extremely useful in other areas of the Functional Analysis like the norm-attainment (see \cite{ccgmr,linds1963}). In the particular case of the extremal structure of ultrapowers, it has been previously considered by J.~Talponen in \cite{Talponen}, where the author studied the properties that link a point $x$ of the unit sphere of a Banach space and its image $\J(x)$ in the ultrapower through the canonical isometry. Some of his results will be generalised in this document since we deal with more general sets (not only with the unit ball) and more general ultrafilters (not only on $\mathbb N$). We also establish how the properties on the $x_i$'s are transferred to $(x_i)_\mathcal U$ and reciprocally. Then the results linking $x$ and $\J(x)$ are obtained as a particular case.

Let us now describe the content of the paper. In Section \ref{seccion:notacion} we include necessary terminology as well as a number of auxiliary results. These results are probably well known by specialists, but we include some proofs in order to be self-contained.

In Section \ref{seccion:principal} we provide our main results in complete generality. After providing a number of examples that suggests which properties we need to look for on $X$, we establish several stability results concerning the extremal structure. For instance, we extend Talponen's result in Theorem~\ref{th:jstrext} showing that $x$ is a strongly extreme point of a bounded convex set $C$ if and only if $\J(x)$ is a (strongly) extreme point of its ultrapower $C_{\mathcal U}$. Moreover, we show that extreme and strongly extreme points of $C_{\mathcal U}$ coincide under mild assumptions on $\mathcal U$, see Theorem~\ref{theo:extequistronglyCI}. We also characterise in Theorem~\ref{th:eqforallU} elements $(x_i)_{i\in I}\in C^I$ giving that $(x_i)_{\mathcal U}$ is an extreme point  of  $C_{\mathcal U}$ for every free ultrafilter $\mathcal U$ on $I$. In the context of denting points (respectively strongly exposed points) we prove that $(x_i)_\mathcal U\in C_\mathcal U$ is a denting point (respectively strongly exposed point) if $\{x_i\}$ satisfy a ``uniform denting condition'' (respectively a uniform condition of strong exposition), see Theorems~\ref{theo:condisufidenting} and~\ref{theo:carauniexposed}. Finally, we prove that every element of $C_{\mathcal U}$ which is exposed by a functional in $(X^*)_{\mathcal U}$ is in fact strongly exposed under mild assumptions on $\mathcal U$, see Theorem~\ref{theo:expequistronglyCI}.

In Section \ref{seccion:particulares} we take a closer look at
two particular kinds of weakly compact convex sets where we expect a nice behaviour of the extremal structure. On the one hand, we consider the notion of \textit{super weakly compact sets} $C$, those in which $C_\mathcal U$ is relatively weakly compact. The second, which is a subclass of super weakly compact sets, is the one of
the \textit{uniformly convex sets}. The main tool in this study is that if $C$ is uniformly convex then $C_\mathcal U$ too (see Proposition~\ref{unifconvex}). The aim of this section is to recover as much as possible the extremal properties of the unit ball of a uniformly convex Banach space. The biggest difficulty is that a uniformly convex set can have empty interior. However, we prove that any extreme point of such a set is denting and any exposed point is strongly exposed (Proposition~\ref{ext_unifconvex}). We also characterise the extreme points of its
ultraproduct set (Theorem~\ref{ext_ultrapower_unifconv}).

\section{Notation and auxiliary results}\label{seccion:notacion}

\subsection{Filters}\label{subsec:filters}

Given a set $I$ and $\mathcal F\subseteq \mathcal P(I)$, we say that $\mathcal F$ is a \textit{filter} if
\begin{enumerate}
    \item $\emptyset\notin\mathcal F$.
    \item $A,B\in\mathcal F$ implies $A\cap B\in \mathcal F$.
    \item $A\in \mathcal F$ and $A\subseteq B$ implies $B\in\mathcal F$.
\end{enumerate}
If $\mathcal U$ is a filter that is not contained in any other proper filter, we say that $\mathcal U$ is an \textit{ultrafilter}. 

Examples of ultrafilters are the \textit{principal ultrafilters}, which are those for which there exists $i\in I$ so that $A\in\mathcal U$ if, and only if, $i\in A$. We will be, however, interested in non-principal ultrafilters (or \textit{free ultrafilters}). These ultrafilters always exist by Zorn lemma, and they are characterised by the property of containing all the subsets of $I$ whose complement is finite.

We will use repeatedly the fact that every infinite subset $J\subset I$ belongs to a free ultrafilter over $I$. Indeed, consider $\mathcal B=\{J\cap L: I\setminus L \text{ is finite}\}$ and the collection $\mathcal F$ of subsets of $I$ containing an element of $\mathcal B$. It's easy to check that $\mathcal F$ is a free filter. By Zorn's lemma, $\mathcal F$  is contained in some ultrafilter (that is also free since it contains $\mathcal F$).

Given an ultrafilter $\mathcal U$ over $I$, a compact Hausdorff space $K$ and a function $f\colon I\longrightarrow K$, it is defined the \textit{limit of $f$ through $\mathcal U$}, and denoted by $\lim_\mathcal U f(i)$, as the unique $x\in K$ with the following property: for every neighbourhood $V$ of $x$,  the set $\{i\in I : f(i)\in V\}$ belongs to $\mathcal U$. In the case of $I=\mathbb N$, it can be proved that an ultrafilter $\mathcal U$ is free if, and only if, $\lim_\mathcal U x_n=\lim x_n$ for every convergent sequence $\{x_n\}$. The following result is a variation in the general context. Recall that $c_0(I)$ is the set of those bounded functions $f\colon I\longrightarrow \mathbb R$ for which $\{i\in I: \vert f(i)\vert\geq \varepsilon\}$ is finite for every $\varepsilon>0$.

\begin{lema}\label{lema:caraultralibreconvergencia}
Let $I$ be an infinite set. Let $f:I\to\mathbb R$ be a bounded function. The following assertions are equivalent:
\begin{enumerate}
    \item[(i)] $f\in c_0(I)$;
    \item[(ii)] $\lim_\mathcal Uf(i)=0$ for all free ultrafilter $\mathcal U$ on $I$.
\end{enumerate}
\end{lema}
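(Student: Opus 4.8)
The plan is to prove the two implications separately, handling (ii)$\Rightarrow$(i) by contraposition and invoking the fact, recalled just above the statement, that every infinite subset of $I$ belongs to some free ultrafilter over $I$. One preliminary observation makes the bookkeeping clean: since $f$ is bounded, its range lies in a compact interval $[-M,M]$ with $M=\sup_{i\in I}|f(i)|$, so $\lim_\mathcal U f(i)$ is well defined for every ultrafilter $\mathcal U$, and in each direction it suffices to identify or locate its value.

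For (i)$\Rightarrow$(ii), I would fix a free ultrafilter $\mathcal U$ and $\varepsilon>0$. Since $f\in c_0(I)$, the set $A_\varepsilon=\{i\in I:|f(i)|\geq\varepsilon\}$ is finite, so its complement is cofinite and hence belongs to $\mathcal U$ (freeness is exactly the property of containing every set with finite complement). Thus $\{i\in I: f(i)\in(-\varepsilon,\varepsilon)\}\in\mathcal U$, which is precisely the statement that $0$ is the limit of $f$ along $\mathcal U$ relative to the neighbourhood $(-\varepsilon,\varepsilon)$. As $\varepsilon>0$ was arbitrary, $\lim_\mathcal U f(i)=0$.

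For (ii)$\Rightarrow$(i), I would argue the contrapositive. Assume $f\notin c_0(I)$; then there is $\varepsilon>0$ for which $J=\{i\in I:|f(i)|\geq\varepsilon\}$ is infinite. By the quoted selection fact, choose a free ultrafilter $\mathcal U$ with $J\in\mathcal U$, and put $L=\lim_\mathcal U f(i)$. The key step is to show $|L|\geq\varepsilon$, so that (ii) fails. Indeed, if $|L|<\varepsilon$, set $\delta=\varepsilon-|L|>0$; then $(L-\delta,L+\delta)\subseteq(-\varepsilon,\varepsilon)$, so by the definition of the ultralimit the set $\{i\in I: f(i)\in(L-\delta,L+\delta)\}\in\mathcal U$ is contained in $\{i\in I:|f(i)|<\varepsilon\}=I\setminus J$. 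This would force both $J$ and $I\setminus J$ to lie in $\mathcal U$, but their intersection is empty, contradicting $\emptyset\notin\mathcal U$. Hence $|L|\geq\varepsilon>0$, so $\lim_\mathcal U f(i)\neq 0$.

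The argument is essentially elementary, and the only point I would regard as the crux is the use of a free ultrafilter \emph{containing the prescribed infinite set} $J$ in the reverse direction, which is exactly the existence fact established immediately before the statement. Everything else reduces to the defining properties of filters and to the compactness argument guaranteeing that the ultralimit exists.
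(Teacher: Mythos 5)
Your proof is correct and follows essentially the same route as the paper's: the forward direction uses that a free ultrafilter contains every cofinite set, and the reverse direction argues by contraposition, placing the infinite set $J$ in a free ultrafilter via the existence fact recalled before the statement. The only difference is that you carefully justify why the ultralimit has modulus at least $\varepsilon$ (via the filter-intersection contradiction), a step the paper dismisses as ``clear''.
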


\begin{proof}
Suppose that $(i)$ holds  and let $\varepsilon>0$. Let $\mathcal U$ be any free ultrafilter on $I$, thus it contains all the sets with finite complement. It follows that $\{i\in I :  |f(i)|\leq\varepsilon\}\in\mathcal U$, which proves that $\lim_\mathcal Uf(i)=0$.

Now suppose that $f\notin c_0(I)$. Then there exists $\varepsilon>0$ such  that $J:=\{i\in I:  |f(i)|>\varepsilon\}$ is infinite. Let $\mathcal U$ be a free ultrafilter on $I$ with $J\in \mathcal U$.
Since $J\in\mathcal U$, it is clear that $\lim_\mathcal Uf(i)\geq\varepsilon>0$.
\end{proof}

Since we are not restricting our attention to filters over $\mathbb N$, the following definition is important.

\begin{defi}
A free ultrafilter $\mathcal U$ on an arbitrary set $I$ is said to be countably incomplete (CI in short) if there exists a sequence $(I_n)_{n\in \mathbb N}\subset \mathcal U$ with $\bigcap_n I_n=\emptyset$.
\end{defi}
    
Note that every CI ultrafilter is free (otherwise, the intersection of elements in the ultrafilter would be non-empty). It is worth mentioning that, given an infinite set $I$, the fact that every free ultrafilter over $I$ is CI is a frequent phenomenon. For instance, it is known \cite[Theorem 2.5]{hj} that if $\kappa$ is a cardinal with a free ultrafilter which is not CI, then $\kappa$ is a strongly inaccessible cardinal (see \cite{hj} for background).

We set a (well-known) characterization of CI ultrafilters. We include a short proof for completeness.

\begin{prop}\label{CI}
Let $\mathcal U$ be ultrafilter on a set $I$. The following assertions are equivalent:
\begin{enumerate}
    \item[(i)] $\mathcal U$ is CI;
    \item[(ii)] there exists $(a_i)_{i\in I}\subset\mathbb R$ such that $\lim_\mathcal Ua_i=0$ and $a_i>0$ for all $i\in I$.
\end{enumerate}
\end{prop}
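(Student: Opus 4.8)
The plan is to prove both implications directly, passing freely between the defining sequence $(I_n)$ and the real-valued function $(a_i)$, since these two objects are essentially interchangeable once one builds the right dictionary.

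For (ii) $\Rightarrow$ (i), which I expect to be the easier direction, I would take the level sets of $(a_i)$. Given $(a_i)_{i\in I}$ with $a_i>0$ and $\lim_\mathcal{U}a_i=0$, set $I_n:=\{i\in I: a_i<1/n\}$. Each neighbourhood $(-1/n,1/n)$ of $0$ forces $I_n\in\mathcal U$ directly from the definition of the limit through $\mathcal U$. On the other hand, if some $i$ belonged to every $I_n$, then $a_i<1/n$ for all $n$ would give $a_i\leq 0$, contradicting $a_i>0$; hence $\bigcap_n I_n=\emptyset$, and $(I_n)_{n\in\mathbb N}$ witnesses that $\mathcal U$ is CI.

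For (i) $\Rightarrow$ (ii) I would first normalise the witnessing sequence. Starting from $(I_n)\subset\mathcal U$ with $\bigcap_n I_n=\emptyset$, I replace $I_n$ by $I_1\cap\cdots\cap I_n$ (still in $\mathcal U$ by the finite-intersection axiom, and still with empty total intersection) to make it decreasing, and I adjoin $I$ as a first term so that $I_1=I$. Now for each $i\in I$ the set of indices $n$ with $i\in I_n$ is a nonempty initial segment $\{1,\dots,n(i)\}$ that is finite because $\bigcap_n I_n=\emptyset$; I then define $a_i:=1/n(i)>0$. The point is that for fixed $\varepsilon>0$, choosing $N$ with $1/N<\varepsilon$ gives the inclusion $I_{N+1}\subseteq\{i\in I: a_i<\varepsilon\}$, since $i\in I_{N+1}$ means $n(i)\geq N+1$ and hence $a_i\leq 1/(N+1)<\varepsilon$. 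As $I_{N+1}\in\mathcal U$, the superset $\{i\in I:a_i<\varepsilon\}$ lies in $\mathcal U$ as well, so $\lim_\mathcal U a_i=0$.

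The only delicate point, and the one I would be most careful about, is this ``exit index'' construction in the forward direction: one must guarantee that $n(i)$ is well defined and finite for \emph{every} $i\in I$ so that $a_i>0$ holds without exception. This is exactly what the preliminary reductions (making the sequence decreasing, starting from $I_1=I$, and using $\bigcap_n I_n=\emptyset$) are designed to secure, after which the verification of $\lim_\mathcal U a_i=0$ is a routine monotonicity argument.
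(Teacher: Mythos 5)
Your proof is correct and follows essentially the same route as the paper: for (ii)$\Rightarrow$(i) you take the level sets $I_n=\{i\in I: a_i<1/n\}$, and for (i)$\Rightarrow$(ii) your ``exit index'' function $a_i=1/n(i)$ coincides with the paper's definition $a_i=1/n$ on $I_n\setminus I_{n+1}$ after the same normalisation to a decreasing sequence with $I_1=I$. You are merely more explicit than the paper about why that normalisation is harmless and why $n(i)$ is finite, which is fine.
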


\begin{proof}
Suppose first that $\mathcal U$ is CI and let $(I_n)_{n\geq 1}\subset \mathcal U$ with $\bigcap_{n\geq 1} I_n=\emptyset$. We can suppose that $I_1=I$ and $I_{n+1}\subsetneq I_n$ for all $n\in\mathbb N$. Define $(a_i)_{i\in I}$ by $a_i=\frac{1}{n}$ if $i\in I_n\setminus I_{n+1}$. Given 
$\varepsilon>0$, take $n_0$ such that $\frac{1}{n_0}<\varepsilon$. It is easily seen that $$I_{n_0}\subset\{i\in I:  a_i<\varepsilon\}.$$ It follows that the last set belongs to $\mathcal U$. Thus, $\lim_\mathcal Ua_i=0$.

Now suppose that $(ii)$ holds. For $n>0$, define $$I_n:=\left\{i\in I:  a_i<\frac{1}{n}\right\}\in\mathcal U.$$ Since $a_i\neq 0$ for all $i\in I$, it is clear that $\bigcap_n I_n=\emptyset$, i.e. $\mathcal U$ is CI.
\end{proof}

We need to recall the notion of the product ultrafilter. If $\mathcal U$ is an ultrafilter on $I$ and $\mathcal V$ is an ultrafilter on $J$ then $\mathcal U\times \mathcal V$ is the ultrafilter on $I\times J$ defined by $$L\in\mathcal U\times\mathcal V\iff\left\{j\in J\ | \left\{i\in I\ |\ (i,j)\in L\right\}\in\mathcal U\right\}\in\mathcal V.$$ 

The following lemma will be useful.

\begin{lema}\label{limit}
Let $(X,d)$ be a metric space, $\mathcal U$ an ultrafilter on a set $I$ and $\mathcal V$ an ultrafilter on a set $J$. Let $(x_{i,j})_{i,j}\in X^{I\times J}$. Then $\lim_{\mathcal U\times\mathcal V}x_{i,j}=\lim_{\mathcal V,j}\lim_{\mathcal U,i}x_{i,j}$ whenever all of these limits exist.
\end{lema}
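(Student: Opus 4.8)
The plan is to show directly that the iterated limit $z := \lim_{\mathcal V,j}\lim_{\mathcal U,i}x_{i,j}$ satisfies the defining property of $\lim_{\mathcal U\times\mathcal V}x_{i,j}$; since $X$ is metric, hence Hausdorff, ultrafilter limits are unique, so this forces the two quantities to coincide. Write $y_j := \lim_{\mathcal U,i}x_{i,j}$, which exists by hypothesis for each $j$, and $z := \lim_{\mathcal V,j}y_j$. It suffices to check that for every $\varepsilon>0$ the set
$$L_\varepsilon := \{(i,j)\in I\times J : d(x_{i,j},z)<\varepsilon\}$$
belongs to $\mathcal U\times\mathcal V$, because the balls $B(z,\varepsilon)$ form a neighbourhood basis of $z$ in $X$ and the preimage of any neighbourhood contains some such $L_\varepsilon$.

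By the definition of the product ultrafilter, the membership $L_\varepsilon\in\mathcal U\times\mathcal V$ means precisely that
$$\{j\in J : \{i\in I : d(x_{i,j},z)<\varepsilon\}\in\mathcal U\}\in\mathcal V,$$
so the second step is to exhibit a $\mathcal V$-large set of indices $j$ for which the inner set lies in $\mathcal U$. To this end I would split $\varepsilon$ in half and use the triangle inequality. Since $z=\lim_{\mathcal V,j}y_j$, the set $B:=\{j : d(y_j,z)<\varepsilon/2\}$ belongs to $\mathcal V$. Fix any $j\in B$. Since $y_j=\lim_{\mathcal U,i}x_{i,j}$, the set $C_j:=\{i : d(x_{i,j},y_j)<\varepsilon/2\}$ belongs to $\mathcal U$, and for $i\in C_j$ the triangle inequality yields $d(x_{i,j},z)\leq d(x_{i,j},y_j)+d(y_j,z)<\varepsilon$. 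Thus $C_j\subseteq\{i : d(x_{i,j},z)<\varepsilon\}$, and upward closure of the filter $\mathcal U$ puts the latter set in $\mathcal U$.

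Consequently $B$ is contained in $\{j : \{i : d(x_{i,j},z)<\varepsilon\}\in\mathcal U\}$, and since $B\in\mathcal V$, upward closure of $\mathcal V$ shows this set lies in $\mathcal V$. This is exactly the statement $L_\varepsilon\in\mathcal U\times\mathcal V$. As $\varepsilon>0$ was arbitrary, $z$ is the $\mathcal U\times\mathcal V$-limit of $(x_{i,j})$, so by uniqueness of limits in the Hausdorff space $X$ we conclude $\lim_{\mathcal U\times\mathcal V}x_{i,j}=z$, as claimed.

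I do not expect a genuine obstacle here: this is a Fubini-type interchange for ultrafilter limits, and the only point demanding care is keeping the nested quantifiers in the definition of $\mathcal U\times\mathcal V$ straight and applying the two single-index hypotheses in the right order, namely first fixing $j$ in a $\mathcal V$-large set and then running the $\mathcal U$-limit in $i$.
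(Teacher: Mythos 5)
Your proof is correct, but it runs in the opposite direction from the paper's. You start from the iterated limit $z=\lim_{\mathcal V,j}\lim_{\mathcal U,i}x_{i,j}$ and verify, via the $\varepsilon/2$ triangle-inequality split, that $z$ satisfies the defining property of the $\mathcal U\times\mathcal V$-limit, then conclude by uniqueness of limits in a Hausdorff space. The paper instead names both limits $x=\lim_{\mathcal U\times\mathcal V}x_{i,j}$ and $y=\lim_{\mathcal V,j}y_j$ at the outset, unwinds the product-ultrafilter membership $\{(i,j): d(x_{i,j},x)<\varepsilon\}\in\mathcal U\times\mathcal V$ into the same nested-quantifier set you use, and deduces $d(y_j,x)\le\varepsilon$ for $\mathcal V$-many $j$, hence $d(x,y)\le\varepsilon$ for every $\varepsilon$, hence $x=y$; no appeal to uniqueness of ultrafilter limits is needed since the metric does that work directly. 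The combinatorial core (translating membership in $\mathcal U\times\mathcal V$ into the statement that the outer index set is $\mathcal V$-large and each inner one is $\mathcal U$-large) is identical in both arguments. What your direction buys is a marginally stronger statement: you only need the inner limits $y_j$ and the outer limit $z$ to exist, and the existence of $\lim_{\mathcal U\times\mathcal V}x_{i,j}$ then comes for free, whereas the paper's argument assumes all limits exist, exactly as the lemma is phrased. What the paper's direction buys is that it never has to invoke uniqueness of limits as a separate principle, and it sidesteps the (minor) point that open balls form a neighbourhood basis, working instead with a single $\varepsilon$ throughout.
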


\begin{proof}
Let $x=\lim_{\mathcal U\times\mathcal V}x_{i,j}$ and $y=\lim_{\mathcal V,j}\lim_{\mathcal U,i}x_{i,j}$. For $j\in J$, define also $y_j=\lim_{\mathcal U,i}x_{i,j}$. Fix $\varepsilon>0$ and note that by definition of the limit we have $$\{(i,j)\in I\times J\ |\ d(x_{i,j},x)<\varepsilon\}\in\mathcal U\times\mathcal V,$$ that is $$J_\varepsilon:=\{j\in J\ |\ \{i\in I\ |\ d(x_{i,j},x)<\varepsilon\}\in\mathcal U\}\in\mathcal V.$$ Then, for all $j\in J_\varepsilon$, we have that $\{i\in I\ |\ d(x_{i,j},x)<\varepsilon\}\in\mathcal U$ which implies that $d(y_j,x)\leq\varepsilon$. Since $J_\varepsilon\in\mathcal V$, we obtain that $d(x,y)\leq\varepsilon$. The arbitrariness of $\varepsilon$ allows us to conclude that $x=y$.
\end{proof}

\subsection{Banach spaces and ultraproducts}\label{subsecnotultra}

Given a Banach space $X$, we denote by $B_X$ (respectively $S_X$) its closed unit ball (respectively unit sphere). Also, by $X^*$ we denote the topological dual of $X$. For simplicity we will deal with real Banach spaces.

Given a Banach space $X$ and an infinite set $I$, we denote  $\ell_\infty(I,X):=\{f\colon I\longrightarrow X: \sup_{i\in I}\Vert f(i)\Vert<\infty\}$. Given a free ultrafilter $\mathcal U$ over $I$, consider $N_\mathcal U:=\{f\in \ell_\infty(I,X): \lim_\mathcal U \Vert f(i)\Vert=0\}$. The \textit{ultrapower of $X$ with respect to $\mathcal U$} is
the Banach space
$$X_\mathcal U:=\ell_\infty(I,X)/N_\mathcal U.$$
We will naturally identify a bounded function $f\colon I\longrightarrow X$ with the element $(f(i))_{i\in I}$. In this way, we denote by $(x_i)_{\mathcal U,i}$ or simply by $(x_i)_\mathcal U$, if no confusion is possible, the coset in $X_\mathcal U$ given by $(x_i)_{i\in I}+N_\mathcal U$.

From the definition of the quotient norm, it is not difficult to prove that $\Vert (x_i)_\mathcal U\Vert=\lim_\mathcal U \Vert x_i\Vert$ for every $(x_i)_\mathcal U\in X_\mathcal U$. This implies that the canonical inclusion $j:X\longrightarrow X_\mathcal U$ given by the equation
$$\J(x):=(x)_\mathcal U$$
is an into linear isometry.

Note that, if $X$ is finite dimensional, then the previous mapping $j$ has an inverse $j^{-1}$ given by $j^{-1}(x_i):=\lim_\mathcal U x_i$. Consequently, $X_\mathcal U=X$ isometrically.

Note also that there is another inclusion $(X^*)_\mathcal U\longrightarrow (X_\mathcal U)^*$ given by the action
$$\langle (x_i^*)_\mathcal U, (x_i)_\mathcal U\rangle=\lim_\mathcal U x_i^*(x_i).$$
Since the previous map can be (easily) proved to be a linear isometry, we will consider $(X^*)_\mathcal U$ as a subspace of  $(X_\mathcal U)^*$. Recall that $(X^*)_\mathcal U=(X_\mathcal U)^*$ if, and only if, $X$ is superreflexive \cite[Corollary~7.2]{heinrichsurvey}. However, in general, $(X^*)_\mathcal U$ is norming for $X_{\mathcal U}$. In fact, given $(x_i)_\mathcal U\in X_\mathcal U$, if we pick for all $i\in I$ an element $x^*_i\in S_{X^*}$ such that $x^*_i(x_i)=\|x_i\|$, we get $$\langle(x^*_i)_\mathcal U,(x_i)_\mathcal U\rangle=\lim_\mathcal Ux^*_i(x_i)=\lim_\mathcal U\|x_i\|=\|(x_i)_\mathcal U\|.$$
Given a bounded subset $A\subseteq X$, denote $A_\mathcal U:=\{(x_i)_\mathcal U: x_i\in A\ \forall i\in I\}$. It is obvious that $A_\mathcal U$ is also bounded. Moreover $A$ is convex if and only if $A_\mathcal U$ is convex. Concerning the question of when $A_\mathcal U$ is closed we have the following result. The first part can be found in \cite[ Proposition~3.2]{SWC1}.

\begin{prop}\label{fact:closed} Let $X$ be a Banach space and let $A\subset X$ be a bounded set. Let $\mathcal U$ be a CI ultrafilter on a infinite set $I$. Then $A_{\mathcal U}$ is closed and $(\overline{A})_\mathcal U=A_\mathcal U=\overline{A_\mathcal U}$.
\end{prop}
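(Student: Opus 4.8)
The plan is to split the statement into the genuinely non-trivial inclusions and dispose of the rest by general nonsense. The inclusions $A_{\mathcal{U}}\subseteq(\overline{A})_{\mathcal{U}}$ (because $A\subseteq\overline{A}$) and $A_{\mathcal{U}}\subseteq\overline{A_{\mathcal{U}}}$ (a set sits inside its closure) are immediate, so everything reduces to proving $(\overline{A})_{\mathcal{U}}\subseteq A_{\mathcal{U}}$ and $\overline{A_{\mathcal{U}}}\subseteq A_{\mathcal{U}}$. Note moreover that once these are in hand the closedness of $A_{\mathcal{U}}$ is literally the equality $A_{\mathcal{U}}=\overline{A_{\mathcal{U}}}$, so the whole proposition collapses to establishing the chain $(\overline{A})_{\mathcal{U}}=A_{\mathcal{U}}=\overline{A_{\mathcal{U}}}$.

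For $(\overline{A})_{\mathcal{U}}=A_{\mathcal{U}}$ I would exploit countable incompleteness in its cleanest form: by Proposition~\ref{CI} fix $(a_i)_{i\in I}\subset\mathbb R$ with $a_i>0$ for all $i$ and $\lim_{\mathcal{U}}a_i=0$. Given $(y_i)_{\mathcal{U}}\in(\overline{A})_{\mathcal{U}}$ with every $y_i\in\overline{A}$, the strict positivity of $a_i$ lets me pick, for each $i$, a point $x_i\in A$ with $\Vert x_i-y_i\Vert<a_i$ (the ball of radius $a_i$ around $y_i$ meets $A$). The family $(x_i)$ is bounded since $\Vert x_i\Vert\leq\Vert y_i\Vert+a_i$ and $\overline{A}$ is bounded, and $\lim_{\mathcal{U}}\Vert x_i-y_i\Vert\leq\lim_{\mathcal{U}}a_i=0$, so $(x_i-y_i)\in N_{\mathcal{U}}$ and $(y_i)_{\mathcal{U}}=(x_i)_{\mathcal{U}}\in A_{\mathcal{U}}$. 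This tolerance trick is exactly the step where the CI hypothesis is indispensable, as it supplies the strictly positive null family $(a_i)$.

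For the closedness part, i.e.\ $\overline{A_{\mathcal{U}}}\subseteq A_{\mathcal{U}}$, I would quote \cite[Proposition~3.2]{SWC1}, but for self-containedness it can be reproved by a diagonal argument driven by CI. Take a decreasing sequence $(I_n)\subset\mathcal{U}$ with $I_1=I$ and $\bigcap_n I_n=\emptyset$. Given $\xi=(\xi_i)_{\mathcal{U}}\in\overline{A_{\mathcal{U}}}$, choose for each $n$ an element $w^{(n)}=(w^{(n)}_i)_{\mathcal{U}}\in A_{\mathcal{U}}$ with $\Vert\xi-w^{(n)}\Vert<1/n$, set $B_n:=\{i\in I:\Vert\xi_i-w^{(n)}_i\Vert<1/n\}\in\mathcal{U}$, and put $D_n:=I_n\cap\bigcap_{k\leq n}B_k\in\mathcal{U}$; the $D_n$ are decreasing with empty intersection. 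Define $n(i):=\max\{n:i\in D_n\}$ for $i\in D_1$ (finite and well defined since $\bigcap_n D_n=\emptyset$), and $x_i:=w^{(n(i))}_i\in A$, making an arbitrary choice in $A$ on the $\mathcal{U}$-null set $I\setminus D_1$. For any $N$ and any $i\in D_N$ one has $n(i)\geq N$, hence $\Vert\xi_i-x_i\Vert<1/n(i)\leq 1/N$; since $D_N\in\mathcal{U}$ this yields $\lim_{\mathcal{U}}\Vert\xi_i-x_i\Vert=0$, so $\xi=(x_i)_{\mathcal{U}}\in A_{\mathcal{U}}$.

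I expect the diagonalisation in the last paragraph to be the only delicate point: collapsing a countable approximating sequence into a single admissible representative requires precisely the decreasing $(I_n)$ furnished by CI to index the coordinatewise choices. Everything else is routine, and since the closedness is already available in \cite{SWC1}, the substantive new content is the short approximation argument giving $(\overline{A})_{\mathcal{U}}=A_{\mathcal{U}}$.
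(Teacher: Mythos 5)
Your proof is correct and takes essentially the same route as the paper: the tolerance argument via Proposition~\ref{CI} for $(\overline{A})_{\mathcal U}=A_{\mathcal U}$ is identical, and your diagonalisation for $\overline{A_{\mathcal U}}\subseteq A_{\mathcal U}$ is the paper's argument. If anything, your version is slightly more careful, since the paper's sets $I'_n=I_n\cap U_n$ need not be nested (making its case distinction $i\in I'_m\setminus I'_{m+1}$ ambiguous), whereas your $D_n=I_n\cap\bigcap_{k\leq n}B_k$ are decreasing by construction, which makes $n(i)$ well defined and the final estimate clean.
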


\begin{proof}
Let $(I_n)_{n\in\mathbb N}$ be a sequence of sets as in the definition of CI ultrafilter, we may assume that $I_{n+1}\subsetneq I_n$ for every $n$. Let $x\in\overline{A_{\mathcal U}}$ and let $(x^n)_{n\in\mathbb N}$ be a sequence of $A_{\mathcal U}$ such that $\|x-x^n\|<\frac{1}{n}$. Consider $U_n=\{i\in I\ :\ \|x_i-x^n_i\|<\frac{1}{n}\}$ for all $n$ and note that $U_n\in \mathcal U$. Then define $I'_n=I_n\cap U_n\in \mathcal U$. Define $y\in A_{\mathcal U}$ by $y_i=x^m_i$ if $i\in I'_m\setminus I'_{m+1}$ for some $m$, and $y_i=x_0$ in the other case, where $x_0$ is a arbitrary element of $A$. One can check that $x=y\in A_{\mathcal U}$. 

Now we need to show that $(\overline{A})_\mathcal U=A_\mathcal U$. Let $(x_i)_\mathcal U\in (\overline{A})_\mathcal U$ with $x_i\in\overline{A}$ for all $i\in I$. By Proposition~\ref{CI}, there exists $(a_i)_{i\in I}\subset\mathbb R$ such that $\lim_\mathcal Ua_i=0$ and $a_i>0$ for all $i\in I$. For $i\in I$, choose $y_i\in A$ such that $\|x_i-y_i\|<a_i$. We deduce that $\lim_\mathcal U\|x_i-y_i\|=0$, that is, $(x_i)_\mathcal U=(y_i)_\mathcal U\in A_\mathcal U$. The other inclusion is obvious.
\end{proof}

We end the subsection with another technical result which will be useful in order to deal with slices in an ultrapower. 

\begin{lema}\label{sup}
Let $A$ be a bounded subset of a Banach space $X$ and $\mathcal U$ be a free ultrafilter on an infinite set $I$. If $(x_i^*)_\mathcal U\in (X^*)_\mathcal U$, then $\sup_{A_\mathcal U} (x_i^*)_\mathcal U=\lim_\mathcal U\sup_Ax_i^*$.
\end{lema}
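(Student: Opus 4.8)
The plan is to establish the two inequalities separately, each following from the monotonicity, linearity, and order-preserving character of the ultrafilter limit, combined with the description of the action of $(x_i^*)_\mathcal U$ recalled above. First I would set $s_i := \sup_A x_i^*$ for each $i\in I$; since $A$ is bounded and $\sup_{i\in I}\|x_i^*\|<\infty$, the family $(s_i)_{i\in I}$ is a bounded real-valued function on $I$, so $\lim_\mathcal U s_i$ exists.

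For the inequality $\sup_{A_\mathcal U}(x_i^*)_\mathcal U \leq \lim_\mathcal U s_i$, I would take an arbitrary $(x_i)_\mathcal U\in A_\mathcal U$ (so that $x_i\in A$ for every $i$) and observe that $x_i^*(x_i)\leq s_i$ holds pointwise in $i$. Passing to the limit through $\mathcal U$ and using that ultrafilter limits respect the order yields $\langle (x_i^*)_\mathcal U,(x_i)_\mathcal U\rangle = \lim_\mathcal U x_i^*(x_i)\leq \lim_\mathcal U s_i$; taking the supremum over $A_\mathcal U$ gives the desired bound.

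For the reverse inequality I would fix $\varepsilon>0$ and, for each $i$, use the definition of the supremum to pick $a_i\in A$ with $x_i^*(a_i)>s_i-\varepsilon$. Then $(a_i)_\mathcal U\in A_\mathcal U$, and by monotonicity and linearity of the limit one gets $\langle (x_i^*)_\mathcal U,(a_i)_\mathcal U\rangle = \lim_\mathcal U x_i^*(a_i)\geq \lim_\mathcal U s_i-\varepsilon$, whence $\sup_{A_\mathcal U}(x_i^*)_\mathcal U\geq \lim_\mathcal U s_i-\varepsilon$. Letting $\varepsilon\to 0$ and combining with the first inequality finishes the proof.

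The argument is essentially routine and I do not anticipate a genuine obstacle: crucially, no uniform control over $i$ is needed, since the approximants $a_i$ are chosen independently for each index, and the only tools invoked are that ultrafilter limits of bounded real functions exist, preserve order, and are additive. The single point to verify carefully is the existence of all the limits involved, which is guaranteed by the uniform boundedness of $A$ together with that of $(\|x_i^*\|)_{i\in I}$.
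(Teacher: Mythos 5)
Your proof is correct and follows essentially the same route as the paper's: both argue via the two inequalities, using that ultrafilter limits of bounded real functions exist and preserve order, and obtaining the lower bound by evaluating $(x_i^*)_\mathcal U$ at an element of $A_\mathcal U$ built from near-maximizers in $A$. The only cosmetic difference is that you pick $a_i\in A$ with $x_i^*(a_i)>\sup_A x_i^*-\varepsilon$ at \emph{every} index, whereas the paper does so only on a set $J\in\mathcal U$ where $\sup_A x_i^*$ is close to its $\mathcal U$-limit and fills in with an arbitrary point of $A$ elsewhere; both constructions accomplish the same thing.
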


\begin{proof}
Let $a=\sup_{A_\mathcal U} (x_i^*)_\mathcal U$ and $b=\lim_\mathcal U\sup_Ax_i^*$. Let $\varepsilon>0$. By definition of $a$, there exists $(x_i)_\mathcal U\in A_\mathcal U$ such that $\langle(x_i^*)_\mathcal U,(x_i)_\mathcal U\rangle>a-\varepsilon$. Then there exists $J\in\mathcal U$ such that $x^*_i(x_i)>a-\varepsilon$ for all $i\in J$. It follows that $$\sup_Ax_i^*\geq x^*_i(x_i)>a-\varepsilon,$$ for all $i\in J$ and taking limit on $\mathcal U$, we conclude that $b\geq a-\varepsilon$. 

Now, by definition of $b$, the set $J:=\{i\in I: \sup_A x_i^*>b-\varepsilon\}$ belongs to $\mathcal U$. For all $i\in J$ there exists $x_i\in A$ such that $x_i^*(x_i)>b-\varepsilon
$. Define $y_i=x_i$ if $i\in J$ and $y_i=x_0$ if not, where $x_0$ is an arbitrary element of $A$. It is clear that $$\langle(x_i^*)_\mathcal U,(y_i)_\mathcal U\rangle\geq  b-\varepsilon
.$$ It follows that $a\geq b-\varepsilon$.
\end{proof}

\subsection{Extremal structure in Banach spaces}
    
Let $C$ be a bounded convex subset of a Banach space $X$. The set of extreme points of $C$ is denoted by $\ext(C)$. Recall that a point $x\in C$ is strongly extreme if for all sequences $(y_n)_n,(z_n)_n\subset C$ such that $$\left\|x-\frac{y_n+z_n}{2}\right\|\xrightarrow[n]{}0,$$ one has that $\|y_n-z_n\|\xrightarrow[n]{}0$. The set of strongly extreme points of $C$ is denoted by $\strext(C)$. 

A slice of $C$ is a subset of $C$ defined by $$S(C,x^*,\alpha)=\{x\in C\ |\ x^*(x)>\sup_Cx^*-\alpha\}$$ where $x^*\in X^*$ and $\alpha>0$. 

Let $Z$ be a subspace of $X^*$. A point $x\in C$ is a $Z$-denting point if for all $\varepsilon>0$, there exist $x^*\in Z$ and $\alpha>0$ such that $x\in S(C,x^*,\alpha)$ and $\diam(S(C,x^*,\alpha))<\varepsilon$. We denote it by $x\in\dent_Z(C)$. A $X^*$-denting point is simply called denting and we write $\dent(C)=\dent_{X^*}(C)$.

A point $x\in C$ is a $Z$-exposed point if there exists $x^*\in Z$ such that $x^*(x)>x^*(y)$ for all $y\in C\setminus\{x\}$. We also said that $x^*$ exposes $x$ in $C$. The set of $Z$-exposed point of $C$ is denoted by $\exp_Z(C)$. A point $x\in C$ is said $Z$-strongly exposed if there exists $x^*\in Z$ exposing $x$ and such that for all sequences $(x_n)_n\subset C$ such that $x^*(x_n)\xrightarrow[n]{}x^*(x)$, it follows that $x_n\xrightarrow[n]{}x$. In this case, we write $x\in\strexp_Z(C)$. It is easy to show that $x\in C$ is $Z$-strongly exposed if there exists $x^*\in Z$ such that $x\in S(C,x^*,\alpha)$ for all $\alpha>0$ and $\lim_{\alpha\to 0^+}\diam(S(C,x^*,\alpha))=0$. As before, an $X^*$-(strongly) exposed point is said (strongly) exposed and we write $\exp(C)=\exp_{X^*}(C)$ and $\strexp(C)=\strexp_{X^*}(C)$. Obviously, \[\strexp(C)\subset \dent(C)\subset\strext(C)\subset\ext(C).\]

The following lemma will be applied with $Z=(X^*)_\mathcal U$ in Section~\ref{seccion:particulares}.

\begin{lema}\label{lemma:dentingsubnormante}
Let $C$ be a weakly compact convex subset of a Banach space $X$. Let $Z$ be a subspace of $X^*$ such that $(X,Z)$ is a dual pair.  Let $x$ be a point of a slice $S$ of $C$. Then there exists a slice $S'$ of $C$ defined by an element of $Z$ such that $x\in S'\subset S$. In particular, if $x\in C$ is denting then $x$ is $Z$-denting.
\end{lema}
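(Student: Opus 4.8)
The plan is to separate the point $x$ from the ``complementary'' part of $C$ by a functional from $Z$, exploiting weak compactness. Write $S=S(C,x^*,\alpha)$ with $x^*\in X^*$, $\alpha>0$, and set $M=\sup_C x^*$, so that $x^*(x)>M-\alpha$. Consider
$$B:=C\setminus S=\{y\in C\ :\ x^*(y)\leq M-\alpha\}.$$
If $B=\emptyset$ then $S=C$ and the conclusion is immediate: any slice of $C$ defined by a nonzero element of $Z$ is contained in $S$, and by enlarging its width we may force it to contain $x$. So I may assume $B\neq\emptyset$. Since $C$ is weakly compact and $B$ is a weakly closed subset of $C$, the set $B$ is weakly compact and convex, and $x\notin B$.

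The key step is to pass to the topology $\sigma(X,Z)$. As $(X,Z)$ is a dual pair, $(X,\sigma(X,Z))$ is a Hausdorff locally convex space whose topological dual is exactly $Z$. Moreover $\sigma(X,Z)$ is coarser than the weak topology $\sigma(X,X^*)$, so the weakly compact set $B$ is again compact for $\sigma(X,Z)$; being compact in a Hausdorff space, it is also $\sigma(X,Z)$-closed. Thus $B$ and the singleton $\{x\}$ are disjoint, convex, $\sigma(X,Z)$-compact sets, and the Hahn--Banach separation theorem yields some $z^*\in Z$ with
$$\sup_B z^* < z^*(x).$$

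It remains to build the slice. Put $N=\sup_C z^*$ and $\delta=N-\sup_B z^*$. Since $N\geq z^*(x)>\sup_B z^*$, we have $\delta>0$, and I claim $S':=S(C,z^*,\delta)$ works. Indeed, $z^*(x)>\sup_B z^*=N-\delta$ gives $x\in S'$, while every $y\in B$ satisfies $z^*(y)\leq\sup_B z^*=N-\delta$, so $y\notin S'$; hence $S'\cap B=\emptyset$, that is, $S'\subseteq C\setminus B=S$. This proves the first assertion. For the ``in particular'' part, if $x$ is denting then for each $\varepsilon>0$ there is a slice $S$ with $x\in S$ and $\diam(S)<\varepsilon$; the slice $S'\subseteq S$ just produced is defined by an element of $Z$, contains $x$, and satisfies $\diam(S')\leq\diam(S)<\varepsilon$, so $x\in\dent_Z(C)$.

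The main obstacle is the correct use of the duality: one must observe that weak compactness of $B$ transfers to the coarser topology $\sigma(X,Z)$, so that strict separation of a point from a compact convex set applies, and that the separating functional can be taken in $Z$ precisely because $Z$ is the dual of $(X,\sigma(X,Z))$. The remaining bookkeeping with the widths of the slices is routine.
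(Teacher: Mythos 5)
Your proof is correct and follows essentially the same route as the paper: observe that $C\setminus S$ is convex and weakly compact, hence $\sigma(X,Z)$-compact, use that $(X,\sigma(X,Z))$ is Hausdorff with dual $Z$ because $(X,Z)$ is a dual pair, and apply Hahn--Banach separation to produce a slice defined by an element of $Z$ containing $x$ and disjoint from $C\setminus S$. Your write-up merely makes explicit what the paper compresses (the convexity of $C\setminus S$, the choice of the slice width $\delta=\sup_C z^*-\sup_B z^*$, and the degenerate case $S=C$).
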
 


\begin{proof}
Note that $C \setminus S$ is a weakly compact set which does not contain $x$. In particular, $C\setminus S$ is $\sigma(X,Z)$-compact. Since $Z$ separates points of $X$, the topology $\sigma(X,Z)$ is Hausdorff. By the Hahn-Banach theorem, there exists $x^*\in (X,\sigma(X,Z))^*=Z$ and a slice $S'$ of $C$ defined by $x^*$ such that $(C\setminus S)\cap S'=\emptyset$ and $x\in S'$. It follows that $x\in S'\subset S$.
\end{proof}

\section{Main results}\label{seccion:principal}

In this section we will exhibit general results about extremality in ultrapowers. Let us begin with some illustrative examples which reveal how restrictive the structure of $X_\mathcal U$ is for extremal notions. Let $C$ be a bounded, closed and convex subset of $X$. The following example shows that $(x_i)_\mathcal U$ needs not to be an extreme point of $C_{\mathcal U}$  
even if $x_i$ is strongly exposed in $C$ for every $i$.

\begin{ejem}\label{ejemplo:compactofexpuestolimnoextre} Let $X=\mathbb R^3$ and $$C:=\cconv(\{(x,y,z)\in\mathbb R^3: (x-1)^2+y^2=1, z=0\}\cup\{(0,0,1),(0,0,-1)\}).$$ Take a sequence $u_n\in\{(x,y,z)\in\mathbb R^3: (x-1)^2+y^2=1, z=0\}\setminus\{(0,0,0)\}$ that converges to $u=(0,0,0)$ . It is not difficult to prove that $u_n\in \strexp(C)$ for every $n\in\mathbb N$ and $u\notin\ext(C)$. Let $\mathcal U$ be a free ultrafilter over $\mathbb N$ and notice that $(u_n)_\mathcal U=\J(u)$ (see Subsection \ref{subsecnotultra}). Since $u$ is not an extreme, $j$ is an onto linear isometry and $C_\mathcal U=\J(C)$, we get that $\J(u)=(x_n)_\mathcal U$ is not an extreme point of $C_{\mathcal U}$.
\end{ejem}

In view of the previous example, we might wonder whether $(x_i)_\mathcal U$ is exposed in $C_{\mathcal U}$ if we require that $\lim_\mathcal U x_i$ is extreme in $C$. However, this is not the case.

\begin{ejem}
Consider in $X=\mathbb R^2$ the sets  $K_1:=\{(x,y)\in\mathbb R^2: x^2+y^2\leq 1, x\leq 0\}$,  $K_2:=\{(x,y)\in\mathbb R^2: (x-1)^2+y^2\leq 1, x\geq 1\}$, and $C:=\co(K_1\cup K_2)$, which is a compact convex set. Consider $(x_n,y_n):=\left(-\sqrt{\frac{2}{n}-\frac{1}{n^2}},1-\frac{1}{n}\right)$ for every $n\in\mathbb N$. It can be proved that every $(x_n,y_n)$ is strongly exposed in $C$ by $(x_n^*,y_n^*)=(x_n,y_n)$. However, the point $(0,1)$, which is limit of $\{(x_n,y_n)\}$, is not exposed in $C$. With a similar argument to that of Example~\ref{ejemplo:compactofexpuestolimnoextre} we conclude that, given any free ultrafilter $\mathcal U$ over $\mathbb N$, $(x_n,y_n)_\mathcal U$ is not a exposed point of $C_{\mathcal U}$.
\end{ejem}

As we pointed out in the introduction, a Banach space $X$ is uniformly convex if, and only if, every ultraproduct of $X$ is strictly convex. In that sense, one might expect that, if $X$ is strictly convex, then one can find an ultraproduct $X_\mathcal U$ so that the unit ball contains at least an extreme point. Our last example reveals that this is also false. 

\begin{ejem}
There is a strictly convex Banach space $X$ so that $B_{X_\mathcal U}$ does not have any extreme point for every infinite set $I$ and every free ultrafilter $\mathcal U$ over $I$. Indeed, let $X$ be a strictly convex Banach space which $M$-embedded in its bidual (see e.g. \cite[P. 168]{HWW1993}). In particular, $X$ is \textit{almost square} \cite[Corollary~4.3]{All2016}. However, \cite[Proposition~2.12]{hardtke} (it is established for $I=\mathbb N$, but it can be settled in general) shows that, given $I$ and $\mathcal U$, then for every $(x_i)_\mathcal U\in S_{X_\mathcal U}$ there exists $(y_i)_\mathcal U\in S_{X_\mathcal U}$ so that $\Vert (x_i)_\mathcal U \pm (y_i)_\mathcal U\Vert=1$. From here the absence of extreme points of $B_{X_\mathcal U}$ is clear.
\end{ejem}

The previous examples reveal that the presence of extremal structure in an ultrapower is very restrictive. Because of this reason, we will look for uniform conditions on the $x_i$'s.

\subsection{Extreme and strongly extreme points}

We will begin by exploring the extreme points of a set $C_\mathcal U$ for a given bounded closed and convex subset $C$ of $X$. Let us start with the following characterization of extreme points of $C_{\mathcal U}$.

\begin{theo}\label{prop:caraextrefiltrofijo}
Let $C$ be a bounded closed convex subset of a Banach space $X$ and $\mathcal U$ be a free ultrafilter on an infinite set $I$. Let $(x_i)_\mathcal U\in C_\mathcal U$. The following assertions are equivalent:
\begin{enumerate}
    \item[(i)] $(x_i)_\mathcal U\in\ext(C_\mathcal U)$;
    \item[(ii)] for any $(y_i)_\mathcal U,(z_i)_\mathcal U\in C_\mathcal U$ so that $\lim_\mathcal U\Vert x_i-\frac{y_i+z_i}{2}\Vert=0$, it follows that $\lim_\mathcal U \Vert x_i-y_i\Vert=0$ and $\lim_\mathcal U\Vert x_i-z_i\Vert=0$.
\end{enumerate}
\end{theo}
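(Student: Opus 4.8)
The plan is to recognize that condition (ii) is nothing more than the definition of extremality for $(x_i)_\mathcal U$ in $C_\mathcal U$, once everything is rewritten in terms of representatives and $\mathcal U$-limits. The only facts I would need are that $C_\mathcal U$ is convex (so that the notion of extreme point makes sense there), that $\Vert (a_i)_\mathcal U\Vert=\lim_\mathcal U\Vert a_i\Vert$ for every $(a_i)_\mathcal U\in X_\mathcal U$, and that by the very definition of $C_\mathcal U$ every one of its elements admits a representative all of whose coordinates lie in $C$.

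First I would spell out the dictionary between equalities in $X_\mathcal U$ and $\mathcal U$-limit conditions. For $u=(y_i)_\mathcal U$ and $v=(z_i)_\mathcal U$ in $C_\mathcal U$, the identity $(x_i)_\mathcal U=\frac{1}{2}(u+v)$ becomes, after applying the norm formula to the difference, exactly $\lim_\mathcal U\Vert x_i-\frac{y_i+z_i}{2}\Vert=0$; likewise $u=(x_i)_\mathcal U$ and $v=(x_i)_\mathcal U$ read $\lim_\mathcal U\Vert x_i-y_i\Vert=0$ and $\lim_\mathcal U\Vert x_i-z_i\Vert=0$. I would also note, using $\Vert\frac{y_i+z_i}{2}-\frac{y_i'+z_i}{2}\Vert=\frac{1}{2}\Vert y_i-y_i'\Vert$, that these conditions are independent of the chosen representatives, so there is no ambiguity in passing between points of $C_\mathcal U$ and coordinatewise data.

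For (i)$\Rightarrow$(ii) I would take $(y_i)_\mathcal U,(z_i)_\mathcal U\in C_\mathcal U$ with $\lim_\mathcal U\Vert x_i-\frac{y_i+z_i}{2}\Vert=0$; by the dictionary this says $(x_i)_\mathcal U=\frac{1}{2}((y_i)_\mathcal U+(z_i)_\mathcal U)$, so extremality of $(x_i)_\mathcal U$ forces $(y_i)_\mathcal U=(z_i)_\mathcal U=(x_i)_\mathcal U$, which is the desired pair of $\mathcal U$-limit equalities. Conversely, for (ii)$\Rightarrow$(i) I would take any $u,v\in C_\mathcal U$ with $(x_i)_\mathcal U=\frac{1}{2}(u+v)$, choose representatives $u=(y_i)_\mathcal U$, $v=(z_i)_\mathcal U$ with $y_i,z_i\in C$ (possible by the definition of $C_\mathcal U$), translate the midpoint identity into $\lim_\mathcal U\Vert x_i-\frac{y_i+z_i}{2}\Vert=0$, and apply (ii) to conclude $u=v=(x_i)_\mathcal U$; hence $(x_i)_\mathcal U\in\ext(C_\mathcal U)$.

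Since the argument is purely a translation of definitions, I do not expect a genuine obstacle; the only point requiring a little care is the bookkeeping with representatives — checking that membership in $C_\mathcal U$ provides coordinates in $C$, and that the limit conditions are representative-independent — together with the systematic use of the norm identity $\Vert(a_i)_\mathcal U\Vert=\lim_\mathcal U\Vert a_i\Vert$ to convert each equality in the quotient space $X_\mathcal U$ into its coordinatewise $\mathcal U$-limit form. The value of the statement is therefore not its difficulty but the concrete $\mathcal U$-limit reformulation it provides, which is what makes the later comparisons with strongly extreme points tractable.
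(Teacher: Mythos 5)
Your proposal is correct and follows essentially the same route as the paper: both proofs consist of the observation that, via the norm identity $\Vert(a_i)_\mathcal U\Vert=\lim_\mathcal U\Vert a_i\Vert$, the midpoint equality $(x_i)_\mathcal U=\frac{1}{2}((y_i)_\mathcal U+(z_i)_\mathcal U)$ and the equalities $(x_i)_\mathcal U=(y_i)_\mathcal U$, $(x_i)_\mathcal U=(z_i)_\mathcal U$ translate exactly into the three $\mathcal U$-limit conditions of (ii), so the statement is a direct rewriting of the definition of extreme point. Your extra remarks on representative-independence and on choosing representatives with coordinates in $C$ are just the bookkeeping the paper leaves implicit.
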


\begin{proof}
Given $(y_i)_\mathcal U,(z_i)_\mathcal U\in C_\mathcal U$, notice that $\lim_\mathcal U\Vert x_i-\frac{y_i+z_i}{2}\Vert=0$ if, and only if,  $(x_i)_\mathcal U=\frac{1}{2}( (y_i)_\mathcal U+(z_i)_\mathcal U)$ in $C_\mathcal U$. On the other hand,  
\[(x_i)_\mathcal U=(y_i)_\mathcal U\Leftrightarrow (x_i-y_i)_\mathcal U=(0)_\mathcal U\Leftrightarrow \lim_\mathcal U\Vert x_i-y_i\Vert=0.\]
This gives the characterisation.
\end{proof}

Given $(x_i)_{i\in I}\in C^I$, it is not difficult to realise that being a (strongly) extreme point of $C_{\mathcal U}$ depends on the considered ultrafilter $\mathcal U$ on $I$. For instance, just take $C=[-1,1]^2\subset \mathbb R^2$ and $x_n=(1,0)$ if $n$ is odd and $x_n=(1,1)$ if $n$ is even. It is easy to find  free ultrafilters $\mathcal U$ and $\mathcal V$ on $\mathbb N$ such that $(x_n)_{\mathcal U}\in \ext(C_{\mathcal U})$ but $(x_n)_{\mathcal V}\notin \ext(C_{\mathcal V})$. Our next goal is to characterise when $(x_i)_{\mathcal U}$ is a strongly extreme point \textit{for every} free ultrafilter $\mathcal U$ in terms of the space $c_0(I)$. Note that this result will be improved in Theorem~\ref{th:eqforallU} below.

\begin{theo}\label{theo:caraextretodoultra}
Let $C$ be a bounded convex subset of a Banach space $X$. Let $I$ be a infinite set and $(x_i)_{i\in I}\in C^I$. The following assertions are equivalent:
\begin{enumerate}
    \item[(i)] $(x_i)_\mathcal U\in\ext(C_\mathcal U)$ for every free ultrafilter $\mathcal U$ on $I$;
    \item[(ii)]  for any $(y_i)_{i\in I},(z_i)_{i\in I}\in C^I$ so that $\left(\left\|x_i-\frac{y_i+z_i}{2}\right\|\right)_{i\in I}\in c_0(I)$, it follows that $(\|x_i-y_i\|)_{i\in I}\in c_0(I)$ and $(\Vert x_i-z_i\Vert)_{i\in I}\in c_0(I)$.
\end{enumerate}
\end{theo}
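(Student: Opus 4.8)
The plan is to derive both implications from the fixed-ultrafilter characterisation of extreme points in Theorem~\ref{prop:caraextrefiltrofijo} together with Lemma~\ref{lema:caraultralibreconvergencia}, which converts membership in $c_0(I)$ into the vanishing of the limit along \emph{every} free ultrafilter. (Although Theorem~\ref{prop:caraextrefiltrofijo} is stated for closed sets, its proof uses only the midpoint characterisation of extreme points, so it applies verbatim to the bounded convex set $C$ considered here.) Throughout I abbreviate $g_i=\|x_i-\frac{y_i+z_i}{2}\|$.

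For the implication (i)$\Rightarrow$(ii) I argue by contraposition. If (ii) fails there are $(y_i)_{i\in I},(z_i)_{i\in I}\in C^I$ with $(g_i)_{i\in I}\in c_0(I)$ but, say, $(\|x_i-y_i\|)_{i\in I}\notin c_0(I)$. The nontrivial direction of Lemma~\ref{lema:caraultralibreconvergencia} provides a free ultrafilter $\mathcal U$ on $I$ with $\lim_\mathcal U\|x_i-y_i\|>0$, while the easy direction applied to $(g_i)\in c_0(I)$ gives $\lim_\mathcal U g_i=0$. Since $(y_i)_\mathcal U,(z_i)_\mathcal U\in C_\mathcal U$, Theorem~\ref{prop:caraextrefiltrofijo} then forces $(x_i)_\mathcal U\notin\ext(C_\mathcal U)$, so (i) fails.

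The reverse implication is the delicate one, and I again argue contrapositively: assume (i) fails, so there is a free ultrafilter $\mathcal U$ with $(x_i)_\mathcal U\notin\ext(C_\mathcal U)$. By Theorem~\ref{prop:caraextrefiltrofijo} we obtain $(y_i)_\mathcal U,(z_i)_\mathcal U\in C_\mathcal U$ with $\lim_\mathcal U g_i=0$ while, after relabelling, $c:=\lim_\mathcal U\|x_i-y_i\|>0$. From these I manufacture a \emph{global} pair violating (ii). Put $A_n=\{i\in I:g_i<1/n\}\in\mathcal U$ and $B=\{i\in I:\|x_i-y_i\|>c/2\}\in\mathcal U$. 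Each $A_n\cap B$ belongs to $\mathcal U$, hence is infinite, so I may choose distinct indices $i_n\in A_n\cap B$ and set $M=\{i_n:n\in\mathbb N\}$. Define $(y_i'),(z_i')\in C^I$ by $y_i'=y_i,\ z_i'=z_i$ for $i\in M$, and $y_i'=z_i'=x_i$ for $i\notin M$. Then $\|x_i-\frac{y_i'+z_i'}{2}\|$ equals $g_{i_n}<1/n$ at $i=i_n\in M$ and equals $0$ off $M$; as only finitely many $i_n$ satisfy $1/n>\varepsilon$, this sequence lies in $c_0(I)$. On the other hand $\|x_i-y_i'\|=\|x_i-y_i\|>c/2$ for every $i\in M$, and $M$ is infinite, so $(\|x_i-y_i'\|)\notin c_0(I)$, contradicting (ii).

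The crux is exactly this final construction, and I expect it to be the main obstacle. The tempting shortcut—finding a single set $K\in\mathcal U$ on which $(g_i)$ restricts to an element of $c_0(K)$ and then altering $(y_i),(z_i)$ only outside $K$—would require, for every $g$ with $\lim_\mathcal U g=0$, a set $K\in\mathcal U$ with $g|_K\in c_0(K)$; this is essentially a P-point-type property of $\mathcal U$ and may fail. The resolution is that condition (ii) is tested against \emph{all} of $I$ rather than against $\mathcal U$, so it suffices to extract a merely \emph{countable} diagonal set $M$ (which need not lie in $\mathcal U$) along which $g_i\to 0$ while $\|x_i-y_i\|$ stays bounded away from $0$. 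The fact that every member of a free ultrafilter is infinite is precisely what makes this diagonalisation available.
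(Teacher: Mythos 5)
Your proof is correct and takes essentially the same route as the paper's: both implications are handled by contraposition through Theorem~\ref{prop:caraextrefiltrofijo} and Lemma~\ref{lema:caraultralibreconvergencia}, and your diagonal choice of distinct indices $i_n\in A_n\cap B$ followed by setting $y_i'=z_i'=x_i$ off $M=\{i_n:n\in\mathbb N\}$ is exactly the paper's construction. Your parenthetical observation that the closedness hypothesis in Theorem~\ref{prop:caraextrefiltrofijo} is never used is also accurate and justifies the citation in this slightly more general setting.
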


\begin{proof} 
For (i)$\Rightarrow$(ii) assume that (ii) does not hold. Then there exist $\varepsilon_0>0$ and elements $(y_i)_{i\in I},(z_i)_{i\in I}\in C^I$ so that $(\|x_i-\frac{y_i+z_i}{2}\|)_{i\in I}\in c_0(I)$ and the set $J:=\{i\in I: \Vert x_i-y_i\Vert\geq \varepsilon_0\}$ is infinite. Let $\mathcal U$ be a free ultrafilter on $I$ with $J\in\mathcal U$.  Note that $\lim_\mathcal U \Vert x_i-\frac{y_i+z_i}{2}\Vert=0$ by Lemma~\ref{lema:caraultralibreconvergencia}, that is, $(x_i)_{\mathcal U}=\frac{(y_i)_{\mathcal U}+(z_i)_{\mathcal U}}{2}$. Moreover, it is not difficult to prove that $\lim_\mathcal U \Vert x_i-y_i\Vert\geq \varepsilon_0$, so by Theorem~\ref{prop:caraextrefiltrofijo} we have that $(x_i)_\mathcal U$ is not an extreme point of $C_\mathcal U$.

For (ii)$\implies$(i) assume that (i) does not hold, that is, there exists a free ultrafilter $\mathcal U$ over $I$ so that $(x_i)_\mathcal U$ is not an extreme point of $C_\mathcal U$. By Theorem~\ref{prop:caraextrefiltrofijo} there are $(y_i)_{i\in I},(z_i)_{i\in I}\in C^I$ so that $\lim_\mathcal U \Vert x_i-\frac{y_i+z_i}{2}\Vert=0$ but $\lim_\mathcal U \Vert x_i-y_i\Vert> \varepsilon_0$ for certain $\varepsilon_0>0$. This implies that the set $B:=\{i\in I: \Vert x_i-y_i\Vert>\varepsilon_0\}$ belongs to $\mathcal U$. Now, construct inductively a sequence $$i_n\in \left\{i\in I: \left\| x_i-\frac{y_i+z_i}{2}\right\|<\frac{1}{n}\right\}\cap B \setminus\{i_1,\ldots, i_{n-1}\}$$
(the intersection is non-emtpy because the previous set actually belongs to $\mathcal U$). Define $(y'_i)_{i\in I}$ and $(z'_i)_{i\in I}$ by $y_i'=z_i'=x_i$ if $i\notin \{i_n: n\in\mathbb N\}$ and $y_{i_n}'=y_{i_n}$ and $z_{i_n}'=z_{i_n}$ for every $n\in\mathbb N$. We have that $(\Vert x_i-\frac{y_i'+z_i'}{2}\Vert)_{i\in I}\in c_0(I)$ because clearly
$$\left\{i\in I: \left\| x_i-\frac{y_i'+z_i'}{2}\right\|\geq \frac{1}{j}\right\}\subseteq \{i_1,\ldots, i_{j-1}\} \quad \forall j\in \mathbb N.$$
On the other hand, we clearly have $\{i_n: n\in\mathbb N\}\subseteq \{i\in I: \Vert x'_i-y'_i\Vert\geq \varepsilon_0\}$, which implies that $(\Vert x_i'-y_i'\Vert)_{i\in I}\notin c_0(I)$, and then $(ii)$ does not hold. This completes the proof of $(ii)\implies(i)$. 
\end{proof}

Our aim is now to study when $(x_i)_\mathcal U$ is a strongly extreme point of $C_\mathcal U$. Let us start with the following result.

\begin{prop}\label{prop:extsubseq}
Let $C$ be a bounded convex set of a Banach space $X$ and $\mathcal U$ be a free ultrafilter on an infinite set $I$. Let $(x_i)_{i\in I}\in C^I$. 
\begin{enumerate}
    \item[(a)] Assume that for every countable subset $J=\{i_n:n\in \mathbb N\}\subset I$ there is a free ultrafilter $\mathcal V$ on $J$ such that $(x_{i_n})_{\mathcal V}\in \ext(C_{\mathcal V})$. Then $(x_i)_{\mathcal U}\in \strext(C_{\mathcal U})$.
    \item[(b)] Assume that $I=\mathbb N$ and $(x_n)_n\subset C$ is such that for all subsequences $(x'_n)_n$ of $(x_n)_n$ there is a free ultrafilter $\mathcal V$ on $\mathbb N$ such that $(x'_n)_{\mathcal V}\in \ext(C_{\mathcal V})$. Then $(x_n)_\mathcal U\in\strext(C_\mathcal U)$.
\end{enumerate}
\end{prop}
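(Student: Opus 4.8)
The plan is to argue by contradiction, reducing the strong extremality of $(x_i)_\mathcal U$ to the extremality hypothesis through a diagonal extraction. Suppose $(x_i)_\mathcal U\notin\strext(C_\mathcal U)$. Then there are sequences $((y_i^n)_\mathcal U)_n,((z_i^n)_\mathcal U)_n\subset C_\mathcal U$ with $\lim_n\Vert (x_i)_\mathcal U-\frac12((y_i^n)_\mathcal U+(z_i^n)_\mathcal U)\Vert=0$ but, after passing to a subsequence in $n$, some $\varepsilon_0>0$ with $\Vert (y_i^n)_\mathcal U-(z_i^n)_\mathcal U\Vert\geq\varepsilon_0$ for all $n$; passing once more to a subsequence I may also assume $\Vert (x_i)_\mathcal U-\frac12((y_i^n)_\mathcal U+(z_i^n)_\mathcal U)\Vert<\frac1n$. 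Recalling that the norm in $X_\mathcal U$ is computed as $\lim_\mathcal U$ of the coordinate norms, these two facts mean that the sets $A_n:=\{i\in I: \Vert x_i-\frac12(y_i^n+z_i^n)\Vert<\frac1n\}$ and $B_n:=\{i\in I: \Vert y_i^n-z_i^n\Vert>\frac{\varepsilon_0}2\}$ both belong to $\mathcal U$ for every $n$.

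Since $\mathcal U$ is free, each $A_n\cap B_n\in\mathcal U$ is infinite, so I can pick inductively distinct indices $i_n\in (A_n\cap B_n)\setminus\{i_1,\dots,i_{n-1}\}$ and set $J:=\{i_n:n\in\mathbb N\}$, a countably infinite subset of $I$. On $J$ define the families $u_{i_n}:=y_{i_n}^n$ and $w_{i_n}:=z_{i_n}^n$, which lie in $C$. By the choice $i_n\in A_n$, the scalars $\Vert x_{i_n}-\frac12(u_{i_n}+w_{i_n})\Vert<\frac1n$ form a null sequence, so $\lim_{\mathcal V}\Vert x_{i_n}-\frac12(u_{i_n}+w_{i_n})\Vert=0$ for \emph{every} free ultrafilter $\mathcal V$ on $J$ (Lemma~\ref{lema:caraultralibreconvergencia} on $J\cong\mathbb N$); by the choice $i_n\in B_n$ one has $\Vert u_{i_n}-w_{i_n}\Vert>\frac{\varepsilon_0}2$ for all $n$, hence $\lim_{\mathcal V}\Vert u_{i_n}-w_{i_n}\Vert\geq\frac{\varepsilon_0}2>0$ for every such $\mathcal V$. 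Thus $(x_{i_n})_\mathcal V=\frac12((u_{i_n})_\mathcal V+(w_{i_n})_\mathcal V)$ while $(u_{i_n})_\mathcal V\neq(w_{i_n})_\mathcal V$, so Theorem~\ref{prop:caraextrefiltrofijo} gives $(x_{i_n})_\mathcal V\notin\ext(C_\mathcal V)$ for \emph{every} free ultrafilter $\mathcal V$ on $J$. This contradicts the assumption of (a), proving $(x_i)_\mathcal U\in\strext(C_\mathcal U)$.

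For (b), I would observe that it is exactly the case $I=\mathbb N$ of (a): an infinite subsequence $(x'_n)_n=(x_{i_n})_n$ of $(x_n)_n$ corresponds to the countable set $J=\{i_n\}$, and under the index bijection $n\mapsto i_n$ a free ultrafilter on $\mathbb N$ pushes forward to a free ultrafilter on $J$, identifying $(x'_n)_\mathcal V$ isometrically with the ultrapower of $(x_j)_{j\in J}$. Hence the two hypotheses coincide and (b) follows from (a); alternatively one repeats the diagonal argument verbatim. I expect the main obstacle to be the bookkeeping of the diagonal extraction: the witnessing pairs $(y^n),(z^n)$ depend on $n$, so the decomposition on $J$ must splice a single coordinate from each level, and one must verify that the per-coordinate midpoint estimates $<\frac1n$ genuinely force the $\mathcal V$-limit to vanish while the uniform separation $\frac{\varepsilon_0}2$ survives the passage to $\mathcal V$ (which is precisely why distinctness of the $i_n$ and the null-sequence property are needed).
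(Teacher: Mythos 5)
Your proof is correct and follows essentially the same argument as the paper's: a contradiction set up via a diagonal extraction of distinct indices $i_n$ from the $\mathcal U$-large sets where the midpoint estimate $<\frac{1}{n}$ and the separation $>\frac{\varepsilon_0}{2}$ hold simultaneously, followed by Lemma~\ref{lema:caraultralibreconvergencia} to rule out extremality for \emph{every} free ultrafilter on the resulting countable set. Your reduction of (b) to (a) via the index bijection is just a cleaner packaging of the paper's remark that the $i_k$ can be chosen strictly increasing; the substance is identical.
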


\begin{proof}
$(a)$ Suppose that $(x_i)_\mathcal U\notin\strext(C_\mathcal U)$. Then there exist two sequences $((y_i^n)_{\mathcal U,i})_n,((z_i^n)_{\mathcal U,i})_n\subset C_\mathcal U$ such that $$\frac{(y_i^n)_{\mathcal U,i}+(z_i^n)_{\mathcal U,i}}{2}\xrightarrow[n]{} (x_i)_\mathcal U$$ but $\|(y_i^n)_{\mathcal U,n}-(z_i^n)_{\mathcal U,i}\|>\varepsilon$ for all $n\in\mathbb N$ and for some $\varepsilon>0$. Without loss of generality, we can suppose that $\|y_i^n-z_i^n\|>\varepsilon$ for all $i\in I$ and $n\in\mathbb N$.

This means that for every $k\in \mathbb N$ there exists $n_k$ such that for all $n\geq n_k$ we have 
\[ \norm{ (x_i)_{\mathcal U}- \frac{(y_i^n)_{\mathcal U}+(z_i^n)_{\mathcal U}}{2}}<\frac{1}{k},\]
that is, the set
\[ A_{n,k}:=\left\{i\in I: \norm{ x_i-\frac{y_i^n+z_i^n}{2}}<\frac{1}{k}\right\}\]
belongs to $\mathcal U$ (in particular, it is infinite) for every $k$ and every $n\geq n_k$. 

Thus, we may choose $i_k\in A_{n_k, k}\setminus\{i_1, \ldots, i_{k-1}\}$ inductively. Then $J=\{i_k:k\in \mathbb N\}$ is countable and \[ \norm{ x_{i_k}-\frac{y_{i_k}^{n_k}+z_{i_k}^{n_k}}{2}} < \frac{1}{k} \qquad \forall k\in \mathbb N.\]

This implies that $\left(\norm{ x_{i_k}-\frac{y_{i_k}^{n_k}+z_{i_k}^{n_k}}{2}}\right)_{n\in \mathbb N}\in c_0(\mathbb N)$. So, given any free ultrafilter $\mathcal V$ on $\mathbb N$, we have  
\[ (x_{i_k})_{\mathcal V}= \frac{(y_{i_k}^{n_k})_{\mathcal V}+(z_{i_k}^{n_k})_{\mathcal V}}{2}\] by Lemma~\ref{lema:caraultralibreconvergencia}. Since $\norm{y_{i_k}^{n_k}-z_{i_k}^{n_k}}\geq \varepsilon$ for all $k$, it follows that $(x_{i_k})_{\mathcal V}\notin \ext(C_{\mathcal V})$, which is a  contradiction.

$(b)$ Just mimic the proof of $(a)$ noting that the sequence $(i_k)_k$ can be chosen to be strictly increasing.
\end{proof}

In \cite[Theorem~2.1]{Talponen} it was proved that, given $x\in S_X$ and a free ultrafilter $\mathcal U$ on $\mathbb N$, $\J(x)$ is an extreme point of $B_{X_\mathcal U}$ if, and only if, $x$ is strongly extreme. We can now improve  that result. 

\begin{thm}\label{th:jstrext}
Let $C$ be a bounded convex subset of a Banach space $X$, $\mathcal U$ be a CI ultrafilter on an infinite set $I$ and let $x\in C$. The following assertions are equivalent:
\begin{enumerate}
    \item[(i)] $\J(x)\in\strext(C_\mathcal U)$;  
    \item[(ii)] $\J(x)\in\ext(C_\mathcal U)$;
    \item[(iii)] $x\in\strext(C)$.
\end{enumerate}
\end{thm}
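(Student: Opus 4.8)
The plan is to prove the cycle $(i)\Rightarrow(ii)\Rightarrow(iii)\Rightarrow(i)$. The first implication $(i)\Rightarrow(ii)$ is immediate from the general inclusion $\strext(C_\mathcal U)\subset\ext(C_\mathcal U)$ recorded in the preliminaries, and it uses nothing about $\mathcal U$.

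For $(ii)\Rightarrow(iii)$ I argue by contraposition, and this is the step where countable incompleteness of $\mathcal U$ is used. Assume $x\notin\strext(C)$. Then there are sequences $(y_n)_n,(z_n)_n\subset C$ and $\varepsilon>0$ with $\norm{x-\frac{y_n+z_n}{2}}\to 0$ while $\norm{y_n-z_n}\geq\varepsilon$ for every $n$ (passing to a subsequence if necessary). Since $\mathcal U$ is CI, I fix a decreasing sequence $I=I_1\supsetneq I_2\supsetneq\cdots$ in $\mathcal U$ with $\bigcap_n I_n=\emptyset$, as in Proposition~\ref{CI}. Every $i\in I$ then lies in exactly one $I_n\setminus I_{n+1}$, so I may define $\widetilde y_i=y_n$ and $\widetilde z_i=z_n$ for $i\in I_n\setminus I_{n+1}$, producing elements $(\widetilde y_i)_\mathcal U,(\widetilde z_i)_\mathcal U\in C_\mathcal U$. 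The key computation is $\lim_\mathcal U\norm{x-\frac{\widetilde y_i+\widetilde z_i}{2}}=0$: given $\delta>0$, choosing $n_0$ with $\norm{x-\frac{y_n+z_n}{2}}<\delta$ for all $n\geq n_0$ gives $I_{n_0}\subset\{i:\norm{x-\frac{\widetilde y_i+\widetilde z_i}{2}}<\delta\}\in\mathcal U$. On the other hand $\norm{\widetilde y_i-\widetilde z_i}\geq\varepsilon$ for every $i$, so $\lim_\mathcal U\norm{\widetilde y_i-\widetilde z_i}\geq\varepsilon>0$; by the triangle inequality $\lim_\mathcal U\norm{x-\widetilde y_i}$ and $\lim_\mathcal U\norm{x-\widetilde z_i}$ cannot both vanish. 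Theorem~\ref{prop:caraextrefiltrofijo} then yields $\J(x)\notin\ext(C_\mathcal U)$, which is exactly the contrapositive.

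For $(iii)\Rightarrow(i)$ I use the standard uniform reformulation of strong extremality: $x\in\strext(C)$ if and only if for every $\varepsilon>0$ there is $\delta>0$ such that $y,z\in C$ with $\norm{x-\frac{y+z}{2}}<\delta$ force $\norm{y-z}<\varepsilon$ (equivalence with the sequential definition is routine). Note this implication will hold for any free ultrafilter, not merely CI ones. Let $((y^n_i)_\mathcal U)_n,((z^n_i)_\mathcal U)_n\subset C_\mathcal U$ satisfy $\frac{(y^n_i)_\mathcal U+(z^n_i)_\mathcal U}{2}\to\J(x)$, with representatives chosen in $C$. Fix $\varepsilon>0$ and take the associated $\delta>0$. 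For all large $n$ one has $\lim_\mathcal U\norm{x-\frac{y^n_i+z^n_i}{2}}<\delta$, so $\{i:\norm{x-\frac{y^n_i+z^n_i}{2}}<\delta\}\in\mathcal U$; on this set $\norm{y^n_i-z^n_i}<\varepsilon$ by the uniform condition, whence $\norm{(y^n_i)_\mathcal U-(z^n_i)_\mathcal U}=\lim_\mathcal U\norm{y^n_i-z^n_i}\leq\varepsilon$. Since $\varepsilon$ is arbitrary, $\norm{(y^n_i)_\mathcal U-(z^n_i)_\mathcal U}\to 0$, i.e. $\J(x)\in\strext(C_\mathcal U)$.

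I expect the main obstacle to be the construction in $(ii)\Rightarrow(iii)$: one must manufacture a genuine nontrivial convex splitting of $\J(x)$ inside $C_\mathcal U$ out of the merely asymptotic splitting of $x$ in $C$. Countable incompleteness is precisely what powers this step, since the decreasing exhausting sequence $(I_n)$ converts the statement ``$\norm{x-\frac{y_n+z_n}{2}}\to 0$ as $n\to\infty$'' into ``$\lim_\mathcal U=0$'' while keeping $\norm{\widetilde y_i-\widetilde z_i}$ bounded below by $\varepsilon$. This is consistent with the observation, made before the statement, that extreme and strongly extreme points of $C_\mathcal U$ need not coincide for arbitrary free ultrafilters.
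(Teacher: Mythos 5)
Your proof is correct, and two of its three implications match the paper's own argument. For (i)$\Rightarrow$(ii) there is nothing to compare, and your (ii)$\Rightarrow$(iii) is essentially the paper's: the paper also fixes a decreasing sequence $(I_n)_n\subset\mathcal U$ with empty intersection and spreads the failing sequences $(y_n)_n,(z_n)_n$ over the blocks $I_n\setminus I_{n+1}$ to manufacture a genuine nontrivial splitting of $\J(x)$ in $C_\mathcal U$; the only cosmetic differences are that the paper records the failure of strong extremality as $\norm{x-y_n}\geq\varepsilon$ rather than $\norm{y_n-z_n}\geq\varepsilon$ (interchangeable here), and that it argues directly from the definition of extreme point rather than citing Theorem~\ref{prop:caraextrefiltrofijo} (as stated, that theorem assumes $C$ closed, a hypothesis absent from Theorem~\ref{th:jstrext}; this is harmless since its proof never uses closedness, but worth flagging if you quote it). Where you genuinely diverge is (iii)$\Rightarrow$(i). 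The paper deduces it by combining Talponen's theorem \cite[Theorem~2.1]{Talponen} (for $x\in\strext(C)$, any countable $J\subset I$ and any free ultrafilter $\mathcal V$ on $\mathbb N$ one has $j_{\mathcal V}(x)\in\ext(C_{\mathcal V})$) with the countable-subset criterion of Proposition~\ref{prop:extsubseq}(a). You instead give a direct, self-contained argument: recast strong extremality of $x$ as the uniform $\varepsilon$--$\delta$ condition (that equivalence is indeed routine) and push it through ultrafilter limits. Your route is more elementary; it makes the theorem logically independent of Talponen's result (which this theorem is meant to generalize, so that is a plus) and of Proposition~\ref{prop:extsubseq}, and it isolates cleanly that this implication needs only freeness of $\mathcal U$, not countable incompleteness. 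What the paper's route buys is economy within its own architecture, since Proposition~\ref{prop:extsubseq} is needed anyway for Theorem~\ref{th:eqforallU} and reduces this implication to two citations. One peripheral inaccuracy in your closing remark: the paper never asserts that $\ext(C_\mathcal U)$ and $\strext(C_\mathcal U)$ can differ for non-CI free ultrafilters (Theorem~\ref{theo:extequistronglyCI} gives equality in the CI case, and the general case is left open); the observation made before Theorem~\ref{theo:caraextretodoultra} is only that extremality of $(x_i)_\mathcal U$ depends on the choice of $\mathcal U$ — but this plays no role in your proof.
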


\begin{proof}
Obviously, we have that $(i)\implies (ii)$. Assume now that $x\in \strext( C)$. Given a countable subset $J=\{i_n:n\in \mathbb N\}\subset I$ and a free ultrafilter $\mathcal V$ on $\mathbb N$, Theorem~2.1 in \cite{Talponen} shows that $j_{\mathcal V}(x)\in \ext(C_{\mathcal V})$ (here $j_{\mathcal V}$ denotes the canonical embedding of $C$ into $C_{\mathcal V}$). Thus, the hypotheses of Proposition~\ref{prop:extsubseq} are satisfied, so we get that $\J(x)\in \strext(C_{\mathcal U})$. That is, (iii)$\Rightarrow$(i).

Finally, assume that $\J(x)\in\ext(C_{\mathcal U})$. If $x\notin\strext(C)$, then there sequences $(y_n)_n$, $(z_n)_n\subset C$ and a number $\varepsilon>0$ such that 
\[\lim_{n\to\infty} \norm{x-\frac{y_n+z_n}{2}}=0\]
and $\norm{x-y_n}\geq \varepsilon$ for all $n\in \mathbb N$. By extracting a subsequence, we may assume that $\norm{x-\frac{y_n+z_n}{2}}<\frac{1}{n}$ for all $n\in \mathbb N$. Now, let $(I_n)_n\subset \mathcal U$ be a sequence of sets with $I_1=I$,  $I_{n+1}\subsetneq I_n$ for all $n$ and $\bigcap_{n\in\mathbb N} I_n=\emptyset$, and define $(y_i')_{i\in I}$, $(z_i')_{i\in I}$ by $y_i'= y_n$ and $z_i'= z_n$ if $i\in I_n\setminus I_{n+1}$. Note that 
\[ I_n\subset \left\{i\in I: \norm{x-\frac{y_i'+z_i'}{2}}\leq \frac{1}{n}\right\}\qquad \forall n\in\mathbb N.\]
Since $I_n\in \mathcal U$, we get 
\[ \J(x)=\frac{(y_i')_{\mathcal U}+(z_i')_{\mathcal U}}{2}.\]
As $\J(x)$ is extreme, this implies $\J(x)=(y_i')_{\mathcal U}$. Since $\norm{x-y_i'}\geq\varepsilon$ for all $i\in I$, this is a contradiction. Therefore, $x\in \strext(C)$, as desired. This shows that (ii)$\Rightarrow$(iii). 
\end{proof}

Our aim will be to determine when $(x_i)_\mathcal U$ is a strongly extreme point of $C_\mathcal U$ which, thanks to the previous theorem, is equivalent to the fact that $j_{\mathcal V}((x_i)_{\mathcal U})$ is an extreme point of $(C_\mathcal U)_\mathcal V$ in the space $(X_\mathcal U)_\mathcal V$. Note that $(X_{\mathcal U})_{\mathcal V}$ is isometric to $X_{\mathcal U\times \mathcal V}$ (where $\mathcal U\times \mathcal V$ is the product ultrafilter defined in Subsection \ref{subsec:filters}). The isometry is $T\colon (X_\mathcal U)_\mathcal V\to X_{\mathcal U\times\mathcal V}$ defined by $T((x_{i,j})_{\mathcal U,i})_{\mathcal V,j})=(x_{i,j})_{\mathcal U\times\mathcal V}$ (see \cite[Proposition 2.1]{stern}). Moreover, it is clear that $T((A_\mathcal U)_\mathcal V)=A_{\mathcal U\times\mathcal V}$ for all bounded sets $A\subset X$.

\begin{thm}\label{theo:extequistronglyCI}
Let $C$ be a bounded convex subset of a Banach space $X$, $\mathcal U$ be a CI ultrafilter on an infinite set $I$. Then $\ext(C_\mathcal U)=\strext(C_\mathcal U)$.
\end{thm}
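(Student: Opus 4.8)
The inclusion $\strext(C_\mathcal U)\subseteq\ext(C_\mathcal U)$ is automatic, so the whole content is the reverse inclusion: every extreme point of $C_\mathcal U$ is strongly extreme. Since $\mathcal U$ is CI we may assume $C$ is closed, because $C_\mathcal U=(\overline C)_\mathcal U$ by Proposition~\ref{fact:closed} and this changes neither the extreme nor the strongly extreme points. I would argue by contradiction. Fix $p=(x_i)_\mathcal U\in\ext(C_\mathcal U)$ and suppose $p\notin\strext(C_\mathcal U)$; then there are sequences $(P^n)_n=((y_i^n)_\mathcal U)_n$ and $(Q^n)_n=((z_i^n)_\mathcal U)_n$ in $C_\mathcal U$ and some $\varepsilon>0$ with $\|p-\tfrac{P^n+Q^n}{2}\|\to 0$ and $\|P^n-Q^n\|>\varepsilon$ for all $n$ (after shrinking $\varepsilon$ slightly and passing to a subsequence). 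Exactly as in the reduction at the start of Proposition~\ref{prop:extsubseq}, I would first normalise the representatives so that $\|y_i^n-z_i^n\|>\varepsilon$ holds for \emph{every} $i\in I$ and every $n$, by modifying $y_i^n,z_i^n$ on the set $\{i:\|y_i^n-z_i^n\|\leq\varepsilon\}\notin\mathcal U$ (for instance, replacing them there by $x_i$), which does not change $P^n,Q^n$ in $C_\mathcal U$.

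The plan is then to compress this whole countable family of approximate midpoint representations of $p$ into a single \emph{exact} midpoint representation, which the extremality of $p$ can rule out. For each $k\in\mathbb N$ pick $N_k$ (strictly increasing in $k$) with $\|p-\tfrac{P^{N_k}+Q^{N_k}}{2}\|<\tfrac1k$; equivalently $A_k:=\{i\in I:\|x_i-\tfrac{y_i^{N_k}+z_i^{N_k}}{2}\|<\tfrac1k\}\in\mathcal U$. Using that $\mathcal U$ is CI, fix a nested sequence $(I_k)_k\subset\mathcal U$ with $I_{k+1}\subsetneq I_k$ and $\bigcap_k I_k=\emptyset$, and set $J_k:=I_k\cap A_1\cap\dots\cap A_k\in\mathcal U$. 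The sets $J_k$ are decreasing, satisfy $J_k\subseteq A_k$, and $\bigcap_k J_k\subseteq\bigcap_k I_k=\emptyset$. Now define $(y_i')_{i\in I},(z_i')_{i\in I}\in C^I$ by $y_i'=y_i^{N_k}$, $z_i'=z_i^{N_k}$ when $i\in J_k\setminus J_{k+1}$, and $y_i'=z_i'=x_i$ when $i\notin J_1$.

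It remains to read off the contradiction. On the one hand, for every $k$ one has $J_k\subseteq\{i\in I:\|x_i-\tfrac{y_i'+z_i'}{2}\|<\tfrac1k\}$, since an $i\in J_k$ lies in some block $J_m\setminus J_{m+1}$ with $m\geq k$ and there $i\in A_m$; hence $\lim_\mathcal U\|x_i-\tfrac{y_i'+z_i'}{2}\|=0$, that is $p=\tfrac12((y_i')_\mathcal U+(z_i')_\mathcal U)$. On the other hand $\|y_i'-z_i'\|>\varepsilon$ on $J_1\in\mathcal U$, so $\lim_\mathcal U\|y_i'-z_i'\|\geq\varepsilon$ and $(y_i')_\mathcal U\neq(z_i')_\mathcal U$. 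By the characterisation of extreme points in Theorem~\ref{prop:caraextrefiltrofijo}, the extremality of $p$ forces $(y_i')_\mathcal U=(z_i')_\mathcal U=p$, contradicting $(y_i')_\mathcal U\neq(z_i')_\mathcal U$. Therefore $p\in\strext(C_\mathcal U)$.

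The one genuinely nontrivial step is the compression in the second paragraph: turning a whole sequence of approximate midpoints into a single exact one. This is false for a general free ultrafilter, and countable incompleteness is exactly what makes it possible, since the vanishing nested sets $(I_k)$ let one distribute the $k$-th approximation over the block $J_k\setminus J_{k+1}$ of the index set and so drive the midpoint defect to $0$ along $\mathcal U$ while keeping $\|y_i'-z_i'\|$ bounded below. I would also remark that the argument can be packaged through the canonical isometry $T\colon(X_\mathcal U)_\mathcal V\to X_{\mathcal U\times\mathcal V}$: by Theorem~\ref{th:jstrext} applied to $C_\mathcal U\subseteq X_\mathcal U$ with a CI ultrafilter $\mathcal V$, strong extremality of $p$ is equivalent to $(x_i)_{\mathcal U\times\mathcal V}\in\ext(C_{\mathcal U\times\mathcal V})$, and the construction above is precisely what shows that $p\in\ext(C_\mathcal U)$ forces the latter. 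Since this reformulation does not avoid the same spreading step, I would present the direct argument.
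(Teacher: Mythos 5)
Your argument's combinatorial core --- using the vanishing nested sets $(I_k)$ to compress countably many approximate midpoint decompositions into a single exact one --- is the same engine that drives the paper's proof, but the packaging is genuinely different. The paper first reduces, via Theorem~\ref{th:jstrext} applied to $C_\mathcal U\subset X_\mathcal U$ together with the identification $(X_\mathcal U)_\mathcal U\cong X_{\mathcal U\times\mathcal U}$, to showing $\J((x_i)_\mathcal U)\in\ext(C_{\mathcal U\times\mathcal U})$, and then performs the diagonal compression on a doubly indexed family $(y_{i,j})$. You skip the detour through the product ultrafilter entirely and compress the representatives $(y_i^{N_k})$ directly, invoking only the elementary characterisation in Theorem~\ref{prop:caraextrefiltrofijo}; this is more self-contained, at the cost of not reusing the already-established Theorem~\ref{th:jstrext}. (You correctly observe this trade-off yourself in your final paragraph.) The initial reduction to closed $C$ via Proposition~\ref{fact:closed} is harmless but unnecessary: nothing in your argument uses closedness.

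One concrete slip needs repair: the normalisation recipe. Replacing $y_i^n,z_i^n$ by $x_i$ on the bad set $\{i:\|y_i^n-z_i^n\|\leq\varepsilon\}$ indeed leaves the cosets $P^n,Q^n$ unchanged, but it produces representatives satisfying $\|y_i^n-z_i^n\|=0$ on that set --- precisely the opposite of the property "$\|y_i^n-z_i^n\|>\varepsilon$ for every $i$" you are trying to arrange. The fix is immediate: since $B_n:=\{i\in I:\|y_i^n-z_i^n\|>\varepsilon\}\in\mathcal U$ is nonempty, fix $i^*\in B_n$ and replace $(y_i^n,z_i^n)$ by $(y_{i^*}^n,z_{i^*}^n)$ for $i\notin B_n$. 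Alternatively, drop the normalisation altogether and build it into the diagonal sets: put $J_k:=I_k\cap A_1\cap\dots\cap A_k\cap B_{N_1}\cap\dots\cap B_{N_k}\in\mathcal U$, so that on each block $J_m\setminus J_{m+1}$ you simultaneously have the midpoint estimate (from $A_m$) and the separation $\|y_i^{N_m}-z_i^{N_m}\|>\varepsilon$ (from $B_{N_m}$). With either repair, the rest of your proof goes through verbatim.
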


\begin{proof}
Let $(U_n)_n\subset\mathcal U$ be a strictly decreasing sequence of sets such that $\bigcap_{n>0}U_n=\emptyset$. Let $(x_i)_\mathcal U\in\ext(C_\mathcal U)$. We need to show that $(x_i)_\mathcal U\in\strext(C_\mathcal U)$. By Theorem \ref{th:jstrext}, it is enough to prove that $\J((x_i)_\mathcal U)\in\ext(C_{\mathcal U\times\mathcal U})$. Suppose that it is not true, then there exist $(y_{i,j})_{\mathcal U\times\mathcal U},(z_{i,j})_{\mathcal U\times\mathcal U}$ and $\varepsilon_0>0$ such that $\|(y_{i,j})-(z_{i,j})\|=\lim_{\mathcal U\times\mathcal U}\|y_{i,j}-z_{i,j} \|>\varepsilon_0$ and
$$\J((x_i)_\mathcal U)=\frac{(y_{i,j})_{\mathcal U\times\mathcal U}+(z_{i,j})_{\mathcal U\times\mathcal U}}{2}.$$ 
Up to changing the definition of $(y_{i,j})$ and $(z_{i,j})$ out of the set $\{(i,j)\in I^2\ |\ \| y_{i,j}-z_{i,j}\|>\varepsilon_0 \}\in\mathcal U\times \mathcal U$, we can assume that $\| y_{i,j}-z_{i,j}\|>\varepsilon_0$ holds for every $i,j\in I$.

It follows that $$\left\{(i,j)\in I^2\ |\ \left\|x_i-\frac{y_{i,j}+z_{i,j}}{2}\right\|<\frac{1}{n}\right\}\in\mathcal U\times\mathcal U$$ for all $n>0$, that is $$J_n:=\left\{j\in I\ | \left\{i\in I\ |\ \left\|x_i-\frac{y_{i,j}+z_{i,j}}{2}\right\|<\frac{1}{n}\right\}\in\mathcal U\right\}\in\mathcal U.$$ For $j\in I$ and $n>0$, define $I_{n,j}=\left\{i\in I\ |\ \left\|x_i-\frac{y_{i,j}+z_{i,j}}{2}\right\|<\frac{1}{n}\right\}$ and note that $I_{n,j}\in\mathcal U$ if $j\in J_n$. Since $J_1\in\mathcal U$, we have that $J_1\neq\emptyset$ and then let $j_1\in J_1$. Define $I'_1=I_{1,j_1}\cap U_1\in\mathcal U$. Now choose $j_2\in J_2$ and define $I'_2=I_{2,j_2}\cap I'_1\cap U_2\in\mathcal U$. Following by induction, we define $I'_n=I_{n,j_n}\cap I'_{n-1}\cap U_n\in\mathcal U$ where $j_n\in J_n$. For $i\in I$, define $y_i=y_{i,j_n}$ if $i\in I'_n\setminus I'_{n+1}$ for some $n>0$ and $y_i=x$ otherwise where $x$ is an arbitrary point of $C$. We define $z_i$ in the same way. Note that $\|y_i-z_i\|>\varepsilon_0$ for all $i\in I'_1$. In fact, if $i\in I'_1$, then there exists $n\geq 0$ such that $i\in I'_n\setminus I'_{n+1}$ (since $\bigcap_{n>0} I'_n=\emptyset$) and $\|y_i-z_i\|=\|y_{i,j_n}-z_{i,j_n}\|>\varepsilon_0$. We deduce that $\|(y_i)_\mathcal U-(z_i)_\mathcal U\|\geq\varepsilon_0$. Let's show that $(x_i)_\mathcal U=\frac{(y_i)_\mathcal U+(z_i)_\mathcal U}{2}$, which will contradict the extremality of $(x_i)_\mathcal U$ and will conclude the proof. Let $\varepsilon>0$ and take $n_0$ such that $\frac{1}{n_0}<\varepsilon$. We are going to show that $I'_{n_0}\subset\left\{i\in I\ |\ \left\|x_i-\frac{y_i+z_i}{2}\right\|<\varepsilon\right\}$, which implies that the last set belongs to $\mathcal U$. So let $i\in I'_{n_0}$. There exists $n\geq n_0$ such that $i\in I'_n\setminus I'_{n+1}$. Then we have that $$\left\|x_i-\frac{y_i+z_i}{2}\right\|=\left\|x_i-\frac{y_{i,j_n}+z_{i,j_n}}{2}\right\|<\frac{1}{n}\leq\frac{1}{n_0}<\varepsilon$$ and the proof is complete.
\end{proof}

Now, we are able to extend Theorem~\ref{theo:caraextretodoultra} including strongly extreme points. 

\begin{thm}\label{th:eqforallU}
Let $C$ be a bounded convex set of a Banach space $X$ and $I$ be an infinite set. Let $(x_i)_{i\in I}\in C^I$. The following assertions are equivalent:
\begin{enumerate}
    \item[(i)] $(x_i)_\mathcal U\in\strext(C_\mathcal U)$ for every free ultrafilter $\mathcal U$ on $I$;
    \item[(ii)] $(x_i)_\mathcal U\in\ext(C_\mathcal U)$ for every free ultrafilter $\mathcal U$ on $I$;
    \item [(iii)] for any $(y_i)_{i\in I},(z_i)_{i\in I}\in C^I$ so that $\left(\left\|x_i-\frac{y_i+z_i}{2}\right\|\right)_{i\in I}\in c_0(I)$, it follows that $(\|x_i-y_i\|)_{i\in I}\in c_0(I)$ and $(\Vert x_i-z_i\Vert)_{i\in I}\in c_0(I)$;
    \item[(iv)] $(x_j)_\mathcal V\in\ext(C_\mathcal V)$ holds for every countable subset $J\subseteq I$ and every free ultrafilter $\mathcal V$ over $J$.
\end{enumerate}
\end{thm}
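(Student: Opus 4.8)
The plan is to prove the cyclic chain $(i)\Rightarrow(ii)\Leftrightarrow(iii)\Rightarrow(iv)\Rightarrow(i)$, reusing the machinery already developed. The implication $(i)\Rightarrow(ii)$ is immediate from the inclusion $\strext(C_\mathcal U)\subseteq\ext(C_\mathcal U)$, and the equivalence $(ii)\Leftrightarrow(iii)$ is precisely Theorem~\ref{theo:caraextretodoultra}. Thus the only genuinely new work lies in the two implications $(iii)\Rightarrow(iv)$ and $(iv)\Rightarrow(i)$.

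For $(iv)\Rightarrow(i)$, I would simply observe that condition $(iv)$ is formally stronger than the hypothesis of Proposition~\ref{prop:extsubseq}(a): for each countable $J\subseteq I$, condition $(iv)$ supplies an extreme point along \emph{every} free ultrafilter on $J$, in particular along \emph{some} such ultrafilter. Since this hypothesis holds irrespective of the target ultrafilter $\mathcal U$, Proposition~\ref{prop:extsubseq}(a) yields $(x_i)_\mathcal U\in\strext(C_\mathcal U)$ for every free ultrafilter $\mathcal U$ on $I$, which is exactly $(i)$.

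The implication $(iii)\Rightarrow(iv)$ is where I expect the only real care to be needed, and the idea is to localize $(iii)$ to countable subsets and then invoke Theorem~\ref{theo:caraextretodoultra} on $J$ in place of $I$. Fix an infinite countable $J\subseteq I$ (for finite $J$ there are no free ultrafilters and $(iv)$ is vacuous) and pick $(y_j)_{j\in J},(z_j)_{j\in J}\in C^J$ with $(\|x_j-\frac{y_j+z_j}{2}\|)_{j\in J}\in c_0(J)$. Extend these to $I$ by setting $y_i'=z_i'=x_i$ for $i\in I\setminus J$ and $y_j'=y_j$, $z_j'=z_j$ for $j\in J$. The key elementary observation is that extending a $c_0(J)$ function by zero produces a $c_0(I)$ function (the super-level sets are unchanged), so $(\|x_i-\frac{y_i'+z_i'}{2}\|)_{i\in I}\in c_0(I)$. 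Applying $(iii)$ gives $(\|x_i-y_i'\|)_{i\in I}\in c_0(I)$; restricting back to $J$ (the super-level sets only shrink, so membership in $c_0$ is preserved) and using $y_j'=y_j$ yields $(\|x_j-y_j\|)_{j\in J}\in c_0(J)$, and likewise for $z$. Since $J$ is itself an infinite set, Theorem~\ref{theo:caraextretodoultra} applied with $J$ in the role of $I$ now gives $(x_j)_\mathcal V\in\ext(C_\mathcal V)$ for every free ultrafilter $\mathcal V$ on $J$, establishing $(iv)$.

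The main (and essentially only) obstacle is the interplay between the sequence space $c_0(I)$ and its counterparts over countable subsets: one must check that extension-by-zero and restriction behave correctly with respect to the $c_0$ condition, and that Theorem~\ref{theo:caraextretodoultra} is genuinely available for an arbitrary infinite index set $J$, not merely for $I$. Once these bookkeeping points are settled, the four conditions collapse together with no further analysis.
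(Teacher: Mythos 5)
Your proposal is correct and takes essentially the same route as the paper: the same cycle of implications, with (iv)$\Rightarrow$(i) via Proposition~\ref{prop:extsubseq}(a), (ii)$\Leftrightarrow$(iii) via Theorem~\ref{theo:caraextretodoultra}, and (iii)$\Rightarrow$(iv) by the identical extend-by-$x_i$-outside-$J$ and restrict-back argument followed by Theorem~\ref{theo:caraextretodoultra} applied with $J$ in place of $I$. The bookkeeping points you flag (extension/restriction preserving the $c_0$ condition, and the theorem holding for arbitrary infinite index sets) are exactly the ones the paper treats as obvious, and your verification of them is sound.
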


\begin{proof}
(iv)$\implies$ (i) follows from (a) of Proposition~\ref{prop:extsubseq}. Moreover, (i)$\implies$(ii) is obvious, whereas (ii)$\Leftrightarrow$(iii) is Theorem \ref{theo:caraextretodoultra}.

Finally, let us prove (iii)$\Rightarrow$(iv), for which  we take a countable subset $J\subseteq I$ and, in order to prove (iv), by Theorem \ref{theo:caraextretodoultra}, take $(y_j),(z_j)\in C^J$ so that $(\Vert x_j-\frac{y_j+z_j}{2}\Vert)\in c_0(J)$, and let us prove that $(\Vert x_j-y_j\Vert)\in c_0(J)$. Define $y_i:=x_i$ and $z_i:=x_i$ if $i\in I\setminus J$, and it is obvious that $(\Vert x_i-\frac{y_i+z_i}{2}\Vert)\in c_0(I)$ since $(\Vert x_j-\frac{y_j+z_j}{2}\Vert)\in c_0(J)$.  Using (iii) we get that $(\Vert x_i-y_i\Vert)\in c_0(I)$ (analogously $(\Vert x_i-z_i\Vert)\in c_0(I)$). From here it is obvious that $(\Vert x_j-y_j\Vert)\in c_0(J)$ as desired. 
\end{proof}

\begin{rema}
Note that we have proved in Theorem \ref{theo:extequistronglyCI} that $\ext(C_\mathcal U)=\strext(C_\mathcal U)$ holds when $\mathcal U$ is a CI ultrafilter. The previous theorem shows that the hypothesis of countably incompleteness can be removed if we require $(x_i)_\mathcal V$ being extreme for every free ultrafilter $\mathcal V$.
\end{rema}

If $I=\mathbb N$, the previous theorem can be stated in terms of convergent sequences:

\begin{coro}
Let $C$ be a bounded convex subset of a Banach space $X$. Let $(x_n)_n\subset C$. The following assertions are equivalent:
\begin{enumerate}
    \item[(i)] $(x_n)_\mathcal U\in\strext(C_\mathcal U)$ for every free ultrafilter $\mathcal U$ on $\mathbb N$;
    \item [(ii)] $(x_n)_\mathcal U\in\ext(C_\mathcal U)$ for every free ultrafilter $\mathcal U$ on $\mathbb N$;
    \item[(iii)] For every pair of sequences $(y_n)_n, (z_n)_n$ in $C$, if $\|x_n-\frac{y_n+z_n}{2} \|\rightarrow 0$ then $\Vert x_n-y_n\Vert\rightarrow 0$ and $\Vert x_n-z_n\Vert\rightarrow 0$;
    \item[(iv)] $(x'_n)_n\in\ext(C_\mathcal U)$ for every subsequence $(x'_n)_n$ of $(x_n)_n$ and every free ultrafilter $\mathcal U$ on $\mathbb N$.
    \end{enumerate}
\end{coro}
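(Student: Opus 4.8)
The plan is to specialise Theorem~\ref{th:eqforallU} to the case $I=\mathbb N$ and to rephrase its three conditions in sequential language. First I would record the elementary observation that a bounded sequence $(a_n)_n\subset\mathbb R$ belongs to $c_0(\mathbb N)$ if and only if $a_n\to 0$, which is immediate from the definition of $c_0(I)$ recalled before Lemma~\ref{lema:caraultralibreconvergencia}. With this translation in hand, conditions (i) and (ii) of the present corollary are verbatim conditions (i) and (ii) of Theorem~\ref{th:eqforallU} (taken with $I=\mathbb N$), while condition (iii) here is precisely condition (iii) there, once the $c_0(\mathbb N)$ requirements are read as convergence of the corresponding sequences to $0$. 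Hence the equivalences (i)$\Leftrightarrow$(ii)$\Leftrightarrow$(iii) are nothing but Theorem~\ref{th:eqforallU} applied with $I=\mathbb N$.

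It then remains to match condition (iv) of the corollary with condition (iv) of Theorem~\ref{th:eqforallU}. For this I would invoke the correspondence between subsequences of $(x_n)_n$ and infinite subsets of $\mathbb N$: a subsequence $(x'_n)_n$ is exactly $(x_{n_k})_k$ for some infinite set $J=\{n_1<n_2<\cdots\}\subseteq\mathbb N$, and conversely every infinite $J\subseteq\mathbb N$ arises in this way. The strictly increasing enumeration $k\mapsto n_k$ is a bijection $\mathbb N\to J$, so it induces a bijection between the free ultrafilters on $\mathbb N$ and the free ultrafilters $\mathcal V$ on $J$, together with a canonical isometry $C_{\mathcal V}\cong C_{\mathcal U}$ carrying $(x_j)_{j\in J,\mathcal V}$ to $(x'_n)_{n,\mathcal U}$; in particular, extremality of the relevant point is preserved. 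Finite subsets $J\subseteq\mathbb N$ admit no free ultrafilter and therefore contribute nothing to condition (iv) of the theorem. Consequently, condition (iv) of the corollary expresses exactly the same requirement as condition (iv) of Theorem~\ref{th:eqforallU}, and the proof is complete by quoting that theorem.

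I expect no serious obstacle here, since the statement is genuinely a reformulation of the already-proven Theorem~\ref{th:eqforallU}. The only point requiring a line of care is the verification in the previous paragraph that the relabelling $k\mapsto n_k$ transports free ultrafilters to free ultrafilters and induces the isometry $C_{\mathcal V}\cong C_{\mathcal U}$ under which the distinguished element corresponds correctly; this is entirely routine, but it is the one place where the passage from "countable subset'' to "subsequence'' must be justified rather than merely asserted.
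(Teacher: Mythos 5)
Your proof is correct, but it takes a different route than the paper for the step involving condition (iv). The equivalences (i)$\Leftrightarrow$(ii)$\Leftrightarrow$(iii) are handled identically in both cases: a direct specialisation of Theorem~\ref{th:eqforallU} to $I=\mathbb N$, reading $c_0(\mathbb N)$ membership as convergence to $0$. For (iv), however, the paper splits the work: it deduces (i)$\Leftrightarrow$(ii)$\Leftrightarrow$(iii)$\Rightarrow$(iv) from the theorem and then obtains (iv)$\Rightarrow$(i) by invoking Proposition~\ref{prop:extsubseq}(b), the sequential extraction result tailored to $I=\mathbb N$ (whose hypothesis is in fact weaker than your (iv): it only asks that each subsequence be extreme for \emph{some} free ultrafilter). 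You instead never leave Theorem~\ref{th:eqforallU}: you identify condition (iv) of the corollary with condition (iv) of the theorem via the strictly increasing enumeration $k\mapsto n_k$ of an infinite subset $J\subseteq\mathbb N$, which, being a bijection, carries free ultrafilters on $\mathbb N$ to free ultrafilters on $J$ and induces an isometry $C_{\mathcal V}\cong C_{\mathcal U}$ matching $(x_j)_{j\in J,\mathcal V}$ with $(x_{n_k})_{k,\mathcal U}$, while finite $J$ are vacuous since they carry no free ultrafilter. This relabelling is indeed routine and correct, and it makes the corollary a literal restatement of the theorem. What the paper's route buys is brevity (two one-line citations, no relabelling to check); what yours buys is economy of tools --- the corollary then rests on Theorem~\ref{th:eqforallU} alone, without Proposition~\ref{prop:extsubseq}(b) --- and it makes explicit the dictionary between subsequences and countable index sets that the paper leaves implicit.
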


\begin{proof}
$(i)\iff(ii)\iff(iii)\implies(iv)$ follows directly from the previous theorem. $(iv)\implies(i)$ is easily deduced from (b) of Proposition \ref{prop:extsubseq}.
\end{proof}

\subsection{Denting points}

In this subsection we will study the denting points in ultrapowers. To this end, let us consider the following notion.

\begin{defi}
Let $C$ be a bounded convex subset of a Banach space $X$. A subset $\{x_i\}_{i\in I}\subset C$ is said to be a \textit{uniformly denting set} if for every $\varepsilon>0$ there exists $\alpha_\varepsilon>0$ with the following property: for every $i\in I$ there exists $x_i^*\in S_{X^*}$ so that
$$\begin{array}{ccc}
x_i\in S(C,x_i^*,\alpha_\varepsilon) & \mbox{ and } & \diam(S(C,x_i^*,\alpha_\varepsilon))<\varepsilon.
\end{array}$$
\end{defi}

This definition should be compared with that of \cite{Tingley} and \cite[P. 4]{Talponen} of a uniform notion of dentable set.

Now we have the following result.

\begin{theo}\label{theo:condisufidenting}
Let $C$ be a bounded convex subset of a Banach space $X$, $\mathcal U$ be a free ultrafilter on an infinite set $I$ and $\{x_i\}_{i\in I}$ be a uniformly denting set in $C$. Then $(x_i)_\mathcal U\in\dent_{(X^*)_{\mathcal U}}(C_\mathcal U)$.
\end{theo}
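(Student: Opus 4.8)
The plan is to produce, for each $\varepsilon>0$, an explicit functional in $(X^*)_{\mathcal U}$ together with a parameter exhibiting $(x_i)_{\mathcal U}$ as a point of a slice of $C_{\mathcal U}$ of diameter less than $\varepsilon$. First I would fix $\varepsilon>0$ and apply the uniformly denting hypothesis at scale $\varepsilon/2$, obtaining $\alpha:=\alpha_{\varepsilon/2}>0$ and, for each $i$, a functional $x_i^*\in S_{X^*}$ with $x_i\in S(C,x_i^*,\alpha)$ and $\diam(S(C,x_i^*,\alpha))<\varepsilon/2$. The natural candidate is then $x^*:=(x_i^*)_{\mathcal U}\in(X^*)_{\mathcal U}$, whose supremum over $C_{\mathcal U}$ is computed by Lemma~\ref{sup} as $\sup_{C_{\mathcal U}}x^*=\lim_{\mathcal U}\sup_C x_i^*$, while $\langle x^*,(x_i)_{\mathcal U}\rangle=\lim_{\mathcal U}x_i^*(x_i)$ by definition of the duality in the ultrapower.

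For the diameter bound I would transfer slices coordinatewise. Given $(u_i)_{\mathcal U},(v_i)_{\mathcal U}\in S(C_{\mathcal U},x^*,\gamma)$ with $\gamma\le\alpha$, the formula for $\sup_{C_{\mathcal U}}x^*$ together with the definition of the $\mathcal U$-limit produces a set in $\mathcal U$ on which both $u_i$ and $v_i$ lie in $S(C,x_i^*,\gamma')$ for some $\gamma'<\gamma\le\alpha$; since $S(C,x_i^*,\gamma')\subseteq S(C,x_i^*,\alpha)$ has diameter $<\varepsilon/2$, we get $\norm{u_i-v_i}<\varepsilon/2$ on that set, and passing to the limit yields $\diam(S(C_{\mathcal U},x^*,\gamma))\le\varepsilon/2<\varepsilon$. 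This part is routine, using only Lemma~\ref{sup} and the monotonicity of slices in the parameter.

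The delicate point, and the step I expect to require the most care, is the membership of $(x_i)_{\mathcal U}$ in an \emph{open} slice. Writing $\delta_i:=\sup_C x_i^*-x_i^*(x_i)\in[0,\alpha)$, one has $\sup_{C_{\mathcal U}}x^*-\langle x^*,(x_i)_{\mathcal U}\rangle=\lim_{\mathcal U}\delta_i$, and the naive estimate only gives this limit $\le\alpha$: each penetration depth $\delta_i$ is strictly below $\alpha$, yet their $\mathcal U$-limit may equal $\alpha$, so $(x_i)_{\mathcal U}$ could land on the boundary of $S(C_{\mathcal U},x^*,\alpha)$ rather than strictly inside it (a back-edge choice of $x_i^*$ on a uniformly convex ball shows this is a genuine phenomenon). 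To circumvent it I would not use the functionals handed by the definition as they stand, but replace them by front-positioned ones. The inclusion $S(C,x_i^*,\alpha)\subseteq B(x_i,\varepsilon/2)$ forces $C\setminus B(x_i,\varepsilon/2)$ into the half-space $\{x_i^*\le\sup_C x_i^*-\alpha\}$, so each $x_i$ is a denting point separated by Hahn--Banach from $\cconv(C\setminus B(x_i,\varepsilon/2))$; positioning the slice so that $x_i$ sits in its front half yields functionals $h_i^*\in S_{X^*}$ and a parameter $\theta>0$ with $x_i\in S(C,h_i^*,\theta)$, $\diam(S(C,h_i^*,\theta))<\varepsilon$, and penetration $<\theta/2$. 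Then $\lim_{\mathcal U}\bigl(\sup_C h_i^*-h_i^*(x_i)\bigr)\le\theta/2<\theta$, so $(x_i)_{\mathcal U}$ lies in the open slice $S\bigl(C_{\mathcal U},(h_i^*)_{\mathcal U},\theta\bigr)$, whose diameter is $<\varepsilon$ by the argument of the previous paragraph, giving $(x_i)_{\mathcal U}\in\dent_{(X^*)_{\mathcal U}}(C_{\mathcal U})$. Securing a \emph{uniform} lower bound on the parameter $\theta$ (equivalently, a uniform front-positioning of the $x_i$'s) is exactly where the uniformity built into the hypothesis must be exploited, and is the main obstacle to make fully rigorous.
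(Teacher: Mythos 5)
Your overall skeleton (use the functionals $x_i^*$ supplied by the hypothesis, form $(x_i^*)_\mathcal U$, compute $\sup_{C_\mathcal U}(x_i^*)_\mathcal U$ via Lemma~\ref{sup}, and transfer the diameter bound coordinatewise) is exactly the paper's, and your second paragraph is correct. You have also correctly isolated the one delicate point: the penetrations $\delta_i=\sup_C x_i^*-x_i^*(x_i)$ are each $<\alpha$, but their ultralimit may equal $\alpha$, so $(x_i)_\mathcal U$ need not lie in the \emph{open} slice $S(C_\mathcal U,(x_i^*)_\mathcal U,\alpha)$. However, your proposed resolution --- replacing the $x_i^*$ by ``front-positioned'' functionals $h_i^*$ obtained by separating $x_i$ from $\cconv(C\setminus B(x_i,\varepsilon/2))$ --- is where the proof genuinely breaks down, and you concede as much. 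Hahn--Banach gives, for each $i$, \emph{some} positive margin (indeed one computes that the margin it can guarantee is of order $\alpha-\delta_i$, which tends to $0$ precisely in the problematic regime $\delta_i\to\alpha$), and nothing in the hypothesis bounds a repositioned parameter $\theta$ away from $0$ uniformly in $i$, nor forces penetration $<\theta/2$. So the ``main obstacle'' you flag is not a technicality to be smoothed out later; as formulated, the repositioning step cannot be completed from the stated hypothesis.

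The missing idea is much simpler and requires no repositioning at all: keep the original $x_i^*$ and \emph{enlarge the slice parameter}. Your requirement ``penetration $<\theta/2$'' is stronger than what is needed; one only needs the limit penetration to be strictly smaller than the parameter. Since $\delta_i<\alpha$ for every $i$, we get $\lim_\mathcal U\delta_i\leq\alpha<\tfrac{3}{2}\alpha$, hence
$\langle(x_i^*)_\mathcal U,(x_i)_\mathcal U\rangle\geq\sup_{C_\mathcal U}(x_i^*)_\mathcal U-\alpha>\sup_{C_\mathcal U}(x_i^*)_\mathcal U-\tfrac{3}{2}\alpha$,
so $(x_i)_\mathcal U$ lies strictly inside the open slice $S\bigl(C_\mathcal U,(x_i^*)_\mathcal U,\tfrac{3}{2}\alpha\bigr)$. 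The price is that one must control the diameter of this \emph{enlarged} slice, and that is exactly what the paper's Lemma~\ref{lemma:LCsuperl} provides: for any bounded convex set and any $x^*\in S_{X^*}$, $\diam\bigl(S(C,x^*,\tfrac{3}{2}\alpha)\bigr)\leq 2\,\diam\bigl(S(C,x^*,\alpha)\bigr)$ (proved by averaging with a point deep in the slice). Applying this in the ultrapower and combining it with your coordinatewise estimate $\diam\bigl(S(C_\mathcal U,(x_i^*)_\mathcal U,\alpha)\bigr)\leq\varepsilon$ yields a slice of $C_\mathcal U$ containing $(x_i)_\mathcal U$ of diameter at most $2\varepsilon$; since $\varepsilon>0$ was arbitrary, $(x_i)_\mathcal U\in\dent_{(X^*)_\mathcal U}(C_\mathcal U)$.
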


For the proof we need the following lemma.

\begin{lema}\label{lemma:LCsuperl}
Let $C$ be a bounded convex subset of a Banach space $X$. Let $x^*\in S_{X^*}$ and $\alpha>0$. Then
$\diam(S(C,x^*,\frac{3}{2}\alpha))\leq 2 \diam(S(C,x^*,\alpha))$.
\end{lema}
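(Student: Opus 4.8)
The plan is to exploit a single ``deep'' point of the slice $S(C,x^*,\alpha)$ together with a midpoint identity. Write $M=\sup_C x^*$, so that $S(C,x^*,\beta)=\{x\in C: x^*(x)>M-\beta\}$ for every $\beta>0$. Since $M$ is a supremum, I can fix once and for all a point $w\in C$ with $x^*(w)>M-\tfrac{1}{2}\alpha$; in particular $w\in S(C,x^*,\alpha)$, so the narrower slice is nonempty and its diameter $d:=\diam(S(C,x^*,\alpha))$ is the quantity to compare against.

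Next I take arbitrary points $u,v\in S(C,x^*,\tfrac{3}{2}\alpha)$ and form the averages $p:=\tfrac{1}{2}(u+w)$ and $q:=\tfrac{1}{2}(v+w)$, both of which lie in $C$ by convexity. The crucial point is that these averages fall back into the narrower slice: using $x^*(u)>M-\tfrac{3}{2}\alpha$ together with $x^*(w)>M-\tfrac{1}{2}\alpha$ one computes
\[
x^*(p)=\tfrac{1}{2}x^*(u)+\tfrac{1}{2}x^*(w)>\tfrac{1}{2}\Big(M-\tfrac{3}{2}\alpha\Big)+\tfrac{1}{2}\Big(M-\tfrac{1}{2}\alpha\Big)=M-\alpha,
\]
so that $p\in S(C,x^*,\alpha)$, and the identical estimate gives $q\in S(C,x^*,\alpha)$.

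Now observe the algebraic identity $p-q=\tfrac{1}{2}(u-v)$, since the contributions of $w$ cancel. Hence $\|u-v\|=2\|p-q\|\leq 2d$, because $p$ and $q$ both lie in $S(C,x^*,\alpha)$. As $u,v$ were arbitrary in $S(C,x^*,\tfrac{3}{2}\alpha)$, taking the supremum yields $\diam(S(C,x^*,\tfrac{3}{2}\alpha))\leq 2d$, which is the assertion.

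I do not expect any genuine obstacle here; the only thing to get right is the bookkeeping of the widths, and it is precisely the factor $\tfrac{3}{2}$ that forces the midpoint $p$ to land exactly on the boundary level $M-\alpha$ of the narrower slice (a point chosen slightly deeper than the supremum would in fact accommodate any factor strictly below $2$). Note finally that the hypothesis $x^*\in S_{X^*}$ is never used, as the slices are measured relative to $\sup_C x^*$.
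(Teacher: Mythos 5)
Your proof is correct and is essentially the same argument as the paper's: the paper also fixes a deep point $x\in S(C,x^*,\alpha/2)$, averages it with two arbitrary points of the wide slice, checks that both midpoints land in $S(C,x^*,\alpha)$, and uses the cancellation $\frac{x+z}{2}-\frac{x+y}{2}=\frac{z-y}{2}$ to get the factor $2$. Your closing remarks (on the sharpness of the factor $\tfrac{3}{2}$ and on $x^*\in S_{X^*}$ being unnecessary) are accurate but not needed.
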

\begin{proof}
Let $y,z\in S(C,x^*,\frac{3}{2}\alpha)$, and let us estimate $\Vert y-z\Vert$. To this end, pick $x\in S(C,x^*,\alpha/2)$ and notice that, if we call $\lambda:=\sup_Cx^*$, we get
$$x^*\left(\frac{x+y}{2}\right)>\frac{\lambda-\frac{\alpha}{2}+\lambda-\frac{3\alpha}{2}}{2}=\lambda-\alpha,$$
so $\frac{x+y}{2}\in S(C,x^*,\alpha)$. Similarly $\frac{x+z}{2}\in S(C,x^*,\alpha)$. Consequently
$$\diam(S(C,x^*,\alpha))\geq \left\Vert\frac{x+z}{2}-\frac{x+y}{2} \right\Vert=\frac{\Vert y-z\Vert}{2},$$
from where the result follows by the arbitrariness of $y,z$.
\end{proof}

\begin{proof}[Proof of Theorem~\ref{theo:condisufidenting}] Pick $\varepsilon>0$, and let us find a slice $S$ of $C_{\mathcal U}$ containing $(x_i)_\mathcal U$ whose diameter is smaller than or equal to $2\varepsilon$. To this end, since $\{x_i\}_{i\in I}$ is uniformly dentable we can find $\alpha>0$ and $\{x_i^*\}_{i\in I}\subset S_{X^*}$ so that $x_i\in S(C,x_i^*,\alpha)$ and $\diam(S(C,x_i^*,\alpha))<\varepsilon$. Note that
$$\langle(x_i^*)_\mathcal U,(x_i)_\mathcal U\rangle=\lim_\mathcal U x_i^*(x_i)\geq \lim_{\mathcal U} \sup_{C} x_i^*-\alpha,$$
so $(x_i)_{\mathcal U}\in S=S(C_\mathcal U, (x_i^*)_\mathcal U,\frac{3\alpha}{2})$. Now, in view of Lemma~\ref{lemma:LCsuperl}, it is enough to prove that $\diam(S(C_\mathcal U, (x_i^*)_\mathcal U,\alpha))\leq \varepsilon$. In order to do so, pick $(y_i)_\mathcal U,(z_i)_\mathcal U\in S(C_\mathcal U, (x_i)_\mathcal U,\alpha)$. Since $\langle(x_i^*)_\mathcal U,(y_i)_\mathcal U\rangle>\sup_{C_{\mathcal U}}(x_i^*)_{\mathcal U}-\alpha$ and $\langle(x_i^*)_\mathcal U,(z_i)_\mathcal U\rangle>\sup_{C_{\mathcal U}(x_i^*)_{\mathcal U}}-\alpha$, we get that
\[
    \lim_{\mathcal U} (x_i^*(y_i)-\sup_{C} x_i^*) + \alpha>0, \ \text{and} \quad   \lim_{\mathcal U} (x_i^*(z_i)-\sup_{C} x_i^*) + \alpha>0.
\]
Thus,
\[J:=\{i\in I: \min\{x_i^*(y_i),x_i^*(z_i)\}>\sup_{C}x_i^*-\alpha\}\in \mathcal U.\]
Given $i\in J$, we get that $y_i,z_i\in S(C,x_i^*,\alpha)$, and so $\Vert y_i-z_i\Vert<\varepsilon$. Consequently,
$$J\subseteq L:=\{i\in I: \Vert y_i-z_i\Vert<\varepsilon\},$$
so $L\in \mathcal U$. It is immediate to obtain that $\Vert (y_i)_\mathcal U-(z_i)_\mathcal U\Vert=\lim_\mathcal U \Vert y_i-z_i\Vert\leq \varepsilon$, from where $\diam(S(C_\mathcal U, (x_i^*)_\mathcal U,\alpha))\leq \varepsilon$ and the proof is finished.
\end{proof}

In the particular case of points of the form $\J(x)$, we can say more. 

\begin{theo}\label{dent_j(x)}
Let $C$ be a bounded convex subset of a Banach space $X$, $x\in C$ and $\mathcal U$ be a free ultrafilter on an infinite set $I$. Then $x\in\dent(C)$ if and only if $\J(x)\in\dent_{(X^*)_\mathcal U}(C_\mathcal U)$.
\end{theo}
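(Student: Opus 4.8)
The plan is to establish the two implications separately, the forward one being an immediate consequence of Theorem~\ref{theo:condisufidenting} and the converse requiring the bulk of the work.

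For the implication $x\in\dent(C)\Rightarrow\J(x)\in\dent_{(X^*)_\mathcal U}(C_\mathcal U)$ I would simply apply Theorem~\ref{theo:condisufidenting} to the constant family $x_i:=x$. Indeed, given $\varepsilon>0$, denting of $x$ provides $x^*\in X^*$ and $\alpha>0$ with $x\in S(C,x^*,\alpha)$ and $\diam(S(C,x^*,\alpha))<\varepsilon$; after normalising we may take $x^*\in S_{X^*}$, and setting $x_i^*:=x^*$ for all $i$ exhibits $\{x\}_{i\in I}$ as a uniformly denting set. Theorem~\ref{theo:condisufidenting} then gives $\J(x)=(x)_\mathcal U\in\dent_{(X^*)_\mathcal U}(C_\mathcal U)$.

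For the converse, I would fix $\varepsilon>0$ and use the hypothesis to obtain $(x_i^*)_\mathcal U\in(X^*)_\mathcal U$ and $\alpha>0$ with $\J(x)\in S(C_\mathcal U,(x_i^*)_\mathcal U,\alpha)$ and $\diam(S(C_\mathcal U,(x_i^*)_\mathcal U,\alpha))<\varepsilon$. Lemma~\ref{sup} turns $\sup_{C_\mathcal U}(x_i^*)_\mathcal U$ into $\lim_\mathcal U\sup_Cx_i^*$, so the membership of $\J(x)$ in the slice reads $\gamma:=\lim_\mathcal U(\sup_Cx_i^*-x_i^*(x))<\alpha$. The goal is then to locate a single index $i_0$ at which $x_{i_0}^*$ witnesses that $x$ is denting in $C$, and to achieve this I would fix an intermediate value $\beta$ with $\gamma<\beta<\alpha$ and show that two subsets of $I$ lie in $\mathcal U$: the set $A:=\{i:\sup_Cx_i^*-x_i^*(x)<\beta\}$, on which $x\in S(C,x_i^*,\beta)$, belongs to $\mathcal U$ because the limit $\gamma$ is strictly below $\beta$; and the set $B:=\{i:\diam(S(C,x_i^*,\beta))\le\varepsilon\}$ belongs to $\mathcal U$. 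For the latter I would argue by contradiction: if $E:=\{i:\diam(S(C,x_i^*,\beta))>\varepsilon\}\in\mathcal U$, choosing $y_i,z_i\in S(C,x_i^*,\beta)$ with $\|y_i-z_i\|>\varepsilon$ for $i\in E$ (and $y_i=z_i=x$ otherwise) yields, via $\beta<\alpha$ and $x_i^*(y_i)>\sup_Cx_i^*-\beta$, that $\langle(x_i^*)_\mathcal U,(y_i)_\mathcal U\rangle\ge\sup_{C_\mathcal U}(x_i^*)_\mathcal U-\beta>\sup_{C_\mathcal U}(x_i^*)_\mathcal U-\alpha$; hence $(y_i)_\mathcal U,(z_i)_\mathcal U\in S(C_\mathcal U,(x_i^*)_\mathcal U,\alpha)$ while $\|(y_i)_\mathcal U-(z_i)_\mathcal U\|=\lim_\mathcal U\|y_i-z_i\|\ge\varepsilon$, contradicting the diameter bound.

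Once both $A,B\in\mathcal U$, their intersection is non-empty; any $i_0\in A\cap B$ provides $x\in S(C,x_{i_0}^*,\beta)$ with $\diam(S(C,x_{i_0}^*,\beta))\le\varepsilon$, and the arbitrariness of $\varepsilon$ gives $x\in\dent(C)$. I expect the only genuine difficulty to be the mismatch between the strict inequalities defining slices in $X$ and the non-strict inequalities that survive an ultrafilter limit; this is exactly what is resolved by inserting the parameter $\beta$ strictly between $\gamma$ and $\alpha$, which at once keeps $x$ inside the individual slices and pushes the lifted pairs $(y_i)_\mathcal U,(z_i)_\mathcal U$ strictly into the larger slice of $C_\mathcal U$.
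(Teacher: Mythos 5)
Your proof is correct and follows essentially the same route as the paper's: the forward implication by applying Theorem~\ref{theo:condisufidenting} to the constant family, and the converse via Lemma~\ref{sup}, an intermediate slice-width parameter (your $\beta$, the paper's $\alpha-\eta$), and a contradiction argument that lifts separated pairs $y_i,z_i$ from the individual slices $S(C,x_i^*,\beta)$ into the ultrapower slice $S(C_\mathcal U,(x_i^*)_\mathcal U,\alpha)$. The only difference is bookkeeping: you track the single quantity $\sup_C x_i^*-x_i^*(x)$ and show that the whole set $B$ of good indices lies in $\mathcal U$, whereas the paper splits the membership condition into two sets $J_1\cap J_2$ and merely extracts one good index by contradiction.
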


\begin{proof}
If $x\in\dent(A)$, then $\J(x)\in\dent_{(X^*)_\mathcal U}(C_\mathcal U)$ by Theorem~\ref{theo:condisufidenting}. 

Now suppose $\J(x)\in\dent_{(X^*)_\mathcal U}(A)$. Let $\varepsilon>0$. There exist $(x_i^*)_\mathcal U\in (X^*)_{\mathcal U}$ and $\alpha>0$ such that $\diam(S(C_\mathcal U, (x_i^*)_\mathcal U,\alpha))\leq\varepsilon$ and $\J(x)\in S(C_\mathcal U, (x_i^*)_\mathcal U,\alpha)$, i.e. $\lim_\mathcal U x^*_i(x)>\sup_{C_\mathcal U} (x_i^*)_\mathcal U-\alpha$. Let $\eta\in(0,\alpha)$ such that $$\lim_\mathcal U x^*_i(x)>\sup_{C_\mathcal U} (x_i^*)_\mathcal U-\alpha+2\eta.$$ Define $$J_1=\{j\in I\ |\ x^*_j(x)>\sup_{C_\mathcal U} (x_i^*)_\mathcal U-\alpha+2\eta\}\in\mathcal U$$ and $$J_2=\{j\in I\ |\ \sup_{C_\mathcal U} (x_i^*)_\mathcal U>\sup_Cx^*_j-\eta\}\in\mathcal U.$$ Note that $J_2\in\mathcal U$ by Lemma~\ref{sup}. Furthermore, for all $j\in J:=J_1\cap J_2$, we have that $x\in S(C,x_j^*,\alpha-\eta)$. In fact, if $j\in J$, we have that $$x^*_j(x)>\sup_{C_\mathcal U} (x_i^*)_\mathcal U-\alpha+2\eta>\sup_Cx^*_j-\alpha+\eta.$$ Now let us show that there exists $j\in J$ such that $\text{diam}(S(C,x_j^*,\alpha-\eta))\leq 2\varepsilon$. Suppose by contradiction that it is not true. Then for all $j\in J$ there exist $y_j,z_j\in S(C,x_j^*,\alpha-\eta)$ such that $\|y_j-z_j\|>2\varepsilon$. We have that $$x^*_j(y_j)>\sup_Cx^*_j-\alpha+\eta$$ for all $j\in J$. It follows that $$\langle(x_i^*)_\mathcal U,(y_i)_\mathcal U\rangle\geq\lim_\mathcal U\sup_Cx_i^*-\alpha+\eta=\sup_{C_\mathcal U} (x_i^*)_\mathcal U-\alpha+\eta>\sup_{C_\mathcal U} (x_i^*)_\mathcal U-\alpha$$ by Lemma~\ref{sup}. This proves that $(y_i)_\mathcal U\in S(C_\mathcal U, (x_i^*)_\mathcal U,\alpha)$. In a similar way, we have that $(z_i)_\mathcal U\in S(C_\mathcal U, (x_i^*)_\mathcal U,\alpha)$. We deduce that $\|(y_i)_\mathcal U-(z_i)_\mathcal U\|\leq\varepsilon$, which contradicts the choice of $y_j$ and $z_j$.
\end{proof}

    \subsection{Exposed and strongly exposed points}

Let us conclude the section of general results with an analysis of strongly exposed points. As it is done in the previous subsection, we will begin by considering a uniform notion. 

\begin{defi}
Let $C$ be a bounded convex subset of a Banach space $X$. A set $\{x_i\}_{i\in I}\subset C$ is said to be a \textit{uniformly strongly exposed set} if there exists $\{x_i^*\}_{i\in I}\subset S_{X^*}$ such that for every $\varepsilon>0$ there exists $\alpha_\varepsilon>0$ satisfying that $$\begin{array}{ccc}
x_i\in S(C,x_i^*,\alpha_\varepsilon) & \mbox{ and } & \diam(S(C,x_i^*,\alpha_\varepsilon))<\varepsilon\quad \forall i\in I.
\end{array}$$
\end{defi}

This definition was probably introduced in the celebrated paper of J.~Lindenstrauss \cite{linds1963}, where it is proved that if a Banach space $X$ satisfies that its unit ball is the closed convex hull of a strongly exposed set then $X$ has Lindenstrauss property A, i.e. the set of norm-attaining operators $NA(X,Y)$ is dense in $L(X,Y)$ for every Banach space $Y$ \cite[Proposition~1]{linds1963}. See \cite[Section~3]{ccgmr} for a number of examples of Banach spaces where the previous condition holds.

Anyway, our interest in uniformly strongly exposed sets comes from the following result.

\begin{theo}\label{theo:carauniexposed}
Let $C$ be a bounded convex subset of a Banach space $X$ and $I$ be an infinite set. Let $\{x_i\}_{i\in I}$ be a family of points exposed in $C$ by $\{x_i^*\}_{i\in I}\subset B_{X^*}$. The following are equivalent:
\begin{enumerate}
    \item[(i)] $\{x^*_i\}_{i\in I}$ uniformly strongly exposes $\{x_i\}_{i\in I}$;
    \item[(ii)] $(x_i^*)_\mathcal U$ strongly exposes $(x_i)_\mathcal U$ in $C_\mathcal U$ for every free ultrafilter $\mathcal U$ on $I$;
    \item[(iii)] $(x_i^*)_\mathcal U$ exposes $(x_i)_\mathcal U$ in $C_\mathcal U$ for every free ultrafilter $\mathcal U$ on $I$.
\end{enumerate}
\end{theo}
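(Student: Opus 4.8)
The plan is to prove the cycle $(i)\Rightarrow(ii)\Rightarrow(iii)\Rightarrow(i)$, the implication $(ii)\Rightarrow(iii)$ being immediate since a strongly exposing functional is in particular exposing. Throughout I will use the slice description of strong exposition recalled in Subsection~\ref{subsec:filters}'s successor: a point is $(X^*)_\mathcal U$-strongly exposed by $(x_i^*)_\mathcal U$ exactly when it lies in every slice $S(C_\mathcal U,(x_i^*)_\mathcal U,\alpha)$ and $\lim_{\alpha\to0^+}\diam S(C_\mathcal U,(x_i^*)_\mathcal U,\alpha)=0$. The two technical inputs are Lemma~\ref{sup}, giving $\sup_{C_\mathcal U}(x_i^*)_\mathcal U=\lim_\mathcal U\sup_C x_i^*$, and the elementary passage between $\mathcal U$-limits and membership in sets of $\mathcal U$.

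For $(i)\Rightarrow(ii)$ I would fix a free ultrafilter $\mathcal U$ and argue as in the proof of Theorem~\ref{theo:condisufidenting}, with the simplification that here the exposing functional is the single element $(x_i^*)_\mathcal U$. Since each $x_i^*$ exposes $x_i$ we have $x_i^*(x_i)=\sup_C x_i^*$, so by Lemma~\ref{sup}
\[\langle(x_i^*)_\mathcal U,(x_i)_\mathcal U\rangle=\lim_\mathcal U x_i^*(x_i)=\lim_\mathcal U\sup_C x_i^*=\sup_{C_\mathcal U}(x_i^*)_\mathcal U,\]
hence $(x_i)_\mathcal U\in S(C_\mathcal U,(x_i^*)_\mathcal U,\alpha)$ for every $\alpha>0$. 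Next, given $\varepsilon>0$ take the $\alpha_\varepsilon$ provided by uniform strong exposition, so that $\diam S(C,x_i^*,\alpha_\varepsilon)<\varepsilon$ for all $i$. If $(y_i)_\mathcal U,(z_i)_\mathcal U\in S(C_\mathcal U,(x_i^*)_\mathcal U,\alpha_\varepsilon)$, then $\lim_\mathcal U(\sup_C x_i^*-x_i^*(y_i))<\alpha_\varepsilon$ forces $\{i:y_i\in S(C,x_i^*,\alpha_\varepsilon)\}\in\mathcal U$, and likewise for $z_i$; on the intersection, which is in $\mathcal U$, one has $\|y_i-z_i\|\le\diam S(C,x_i^*,\alpha_\varepsilon)<\varepsilon$, whence $\|(y_i)_\mathcal U-(z_i)_\mathcal U\|=\lim_\mathcal U\|y_i-z_i\|\le\varepsilon$. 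Thus $\diam S(C_\mathcal U,(x_i^*)_\mathcal U,\alpha_\varepsilon)\le\varepsilon$, and letting $\varepsilon\to0$ gives the slice characterization of strong exposition. (Any harmless factor arising from strict versus non-strict inequalities can be absorbed through Lemma~\ref{lemma:LCsuperl}, exactly as in Theorem~\ref{theo:condisufidenting}.)

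The crux is $(iii)\Rightarrow(i)$, which I would prove by contraposition. Assuming $\{x_i^*\}$ does not uniformly strongly expose $\{x_i\}$, and using that $x_i^*(x_i)=\sup_C x_i^*$ already guarantees the membership part of the definition, the failure must come from the diameters: there is $\varepsilon_0>0$ such that for every $\alpha>0$ some index $i$ satisfies $\diam S(C,x_i^*,\alpha)\ge\varepsilon_0$. I would then extract depths $\alpha_n\to0^+$ and pairwise distinct indices $i_n$ with $\diam S(C,x_{i_n}^*,\alpha_n)\ge\varepsilon_0$; at each $i_n$ pick two points of the slice at mutual distance $>\varepsilon_0/2$ and let $w_{i_n}$ be the one farther from $x_{i_n}$, so that $x_{i_n}^*(w_{i_n})>\sup_C x_{i_n}^*-\alpha_n$ and $\|w_{i_n}-x_{i_n}\|>\varepsilon_0/4$. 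Setting $J=\{i_n:n\in\mathbb N\}$, choosing a free ultrafilter $\mathcal U$ with $J\in\mathcal U$, and defining $w_i=w_{i_n}$ for $i=i_n$ and $w_i=x_i$ otherwise, one checks that the deficiency $\sup_C x_i^*-x_i^*(w_i)$ falls below any $\delta>0$ off a finite subset of $J$, hence is $\mathcal U$-null, so $(w_i)_\mathcal U$ also attains $\sup_{C_\mathcal U}(x_i^*)_\mathcal U$ by Lemma~\ref{sup}; while $\|w_i-x_i\|>\varepsilon_0/4$ on $J\in\mathcal U$ gives $\lim_\mathcal U\|w_i-x_i\|\ge\varepsilon_0/4$, so $(w_i)_\mathcal U\ne(x_i)_\mathcal U$. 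Two distinct maximizers contradict $(iii)$.

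I expect the main obstacle to be the extraction of \emph{pairwise distinct} indices $i_n$ together with depths $\alpha_n\to0$: one must argue that the failure of the uniform condition is genuinely spread over infinitely many coordinates rather than concentrated on finitely many. This is handled by observing that the sets $A_\alpha=\{i:\diam S(C,x_i^*,\alpha)\ge\varepsilon_0\}$ decrease as $\alpha\downarrow0$ while staying nonempty, so either $\bigcap_\alpha A_\alpha=\emptyset$—in which case each fixed index eventually leaves every $A_{\alpha_n}$ and a greedy choice produces infinitely many distinct $i_n$—or some coordinate survives all depths, the degenerate situation (a single point exposed but not strongly exposed) to be isolated. Once the distinct indices are in hand, the remaining bookkeeping—deficiencies tending to $0$ along $\mathcal U$ while distances stay bounded below—is routine.
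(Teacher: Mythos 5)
Your proofs of (i)$\Rightarrow$(ii) and (ii)$\Rightarrow$(iii) are correct and essentially the paper's own (Lemma~\ref{sup} plus the standard interplay between $\mathcal U$-limits and membership of sets in $\mathcal U$; the paper bounds the distance from $(x_i)_{\mathcal U}$ to an arbitrary point of the slice instead of the diameter, which is the same argument). The genuine gap is the second branch of the dichotomy in your proof of (iii)$\Rightarrow$(i): the case in which some coordinate $i_0$ ``survives all depths'', i.e.\ $x_{i_0}$ is exposed but not strongly exposed by $x_{i_0}^*$, is declared ``to be isolated'' and never treated. This case cannot be repaired, because in it the implication (iii)$\Rightarrow$(i) is actually false. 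Indeed, take $X=\ell_2$ and $C=\cconv\left(\{e_1\}\cup\left\{\left(1-\tfrac1n\right)e_1+e_n : n\geq 2\right\}\right)$. Then $e_1^*$ exposes $e_1$ in $C$ but does not strongly expose it (the points $\left(1-\tfrac1n\right)e_1+e_n$ eventually lie in every slice $S(C,e_1^*,\alpha)$ and stay at distance $\geq 1$ from $e_1$), while $z:=\tfrac12 e_1+e_2$ is strongly exposed by $e_2^*$. For $I=\mathbb N$ put $x_1=e_1$, $x_1^*=e_1^*$ and $x_i=z$, $x_i^*=e_2^*$ for $i\geq 2$. Then (i) fails, since every slice $S(C,e_1^*,\alpha)$ has diameter $\geq 1$; but every free ultrafilter $\mathcal U$ on $\mathbb N$ contains $\{i\geq 2\}$, so $(x_i)_{\mathcal U}=\J(z)$ and $(x_i^*)_{\mathcal U}=\J(e_2^*)$, and $(x_i^*)_{\mathcal U}$ strongly exposes $(x_i)_{\mathcal U}$ in $C_{\mathcal U}$ by your own implication (i)$\Rightarrow$(ii) applied to the constant family. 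Thus (ii) and (iii) hold while (i) fails: a coordinate invisible to all free ultrafilters can destroy (i) without affecting (ii) or (iii).

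You should also know that the paper's proof of (iii)$\Rightarrow$(i) contains the very same gap, concealed in the claim that $L:=\{i_n:n\in\mathbb N\}$ is infinite. There the witnesses are written $y_{i_n}$, as if they depended only on the index and not on $n$; under that tacit assumption a finite $L$ yields a single \emph{fixed} point $y_{i_{n_0}}$ with $x_{i_{n_0}}^*(y_{i_{n_0}})>\sup_C x_{i_{n_0}}^*-\tfrac1n$ for all large $n$, hence a second point attaining the supremum, contradicting the fact that $x_{i_{n_0}}^*$ exposes $x_{i_{n_0}}$. But the negation of (i) only supplies witnesses depending on $n$: in the example above they are forced to be the points $\left(1-\tfrac1m\right)e_1+e_m$ with $m\to\infty$, all attached to the single index $i=1$, and then no contradiction arises. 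So your ``degenerate situation'' is precisely the case the paper's argument silently excludes, and it is a counterexample to the stated equivalence, not an edge case one can isolate. The positive outcome of your proposal is that your first branch becomes a complete and correct proof once the degenerate case is excluded by hypothesis: if one strengthens the assumption to ``each $x_i^*$ \emph{strongly} exposes $x_i$'' (or weakens (i) so that the small-slice condition is required only for all but finitely many $i$), then $\bigcap_{\alpha}A_\alpha=\emptyset$, your greedy extraction of pairwise distinct indices $i_n$ with depths $\alpha_n\to 0$ goes through, and your ultrafilter construction (deficiencies $\mathcal U$-null, distances bounded below on a set of $\mathcal U$) correctly produces a second maximiser of $(x_i^*)_{\mathcal U}$, contradicting (iii).
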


\begin{proof}(i)$\Rightarrow$(ii). Note that,  by Lemma~\ref{sup}, $$\langle(x_i^*)_\mathcal U,(x_i)_\mathcal U\rangle=\lim_\mathcal Ux_i^*(x_i)=\lim_\mathcal U\sup_Cx_i^*=\sup_{C_\mathcal U} (x_i^*)_\mathcal U.$$
Now let $\varepsilon>0$ and take $\alpha>0$ given by the definition of uniformly strongly exposed set. Suppose that $(y_i)_\mathcal U\in S(C_\mathcal U, (x_i^*)_{\mathcal U}, \alpha)$. By the previous equalities, it means that $\lim_\mathcal Ux_i^*(y_i)>\lim_\mathcal Ux_i^*(x_i)-\alpha$. In particular, $$J:=\{i\in I\ |\ x_i^*(y_i)>x_i^*(x_i)-\alpha\}\in\mathcal U.$$ Then $\|y_i-x_i\|<\varepsilon$ for all $i\in J$. We conclude that $\|(x_i)_\mathcal U-(y_i)_\mathcal U\|\leq\varepsilon$, thus $(x_i)_\mathcal U\in\strexp_{(X^*)_\mathcal U}(C_\mathcal U)$.

(ii)$\Rightarrow$(iii) is obvious, so let us prove (iii)$\Rightarrow$(i). Assume that $\{x_i\}_{i\in I}$ is not uniformly strongly exposed by $\{x_i^*\}_{i\in I}$, and let us find a free ultrafilter $\mathcal U$ on $I$ so that $(x_i)_\mathcal U$ is not exposed by $(x_i^*)_\mathcal U$.

Since $\{x_i\}_{i\in I}$ is not uniformly strongly exposed by $\{x_i^*\}_{i\in I}$, there exists $\varepsilon_0>0$ so that, for every $n\in\mathbb N$, there exists $i_n\in I$ and $y_{i_n}\in C$ so that $x_{i_n}^*(y_{i_n})>\sup_C x_i^*-\frac{1}{n}$ but $\Vert x_{i_n}-y_{i_n}\Vert\geq \varepsilon_0$. Define the set $L:=\{i_n: n\in\mathbb N\}$ and note that $L$ is infinite. Otherwise, there is $n_0$ such that $i_n=i_{n_0}$ for $n\geq n_0$. We have that 
$$x^*_{i_{n_0}}(y_{i_{n_0}})=x^*_{i_{n}}(y_{i_{n}})>\sup_C x^*_{i_n}-\frac{1}{n}=\sup_C x^*_{i_{n_0}}-\frac{1}{n} \quad \forall n\geq n_0,$$ so taking limit we deduce that $x^*_{i_{n_0}}(y_{i_{n_0}})=\sup_C x^*_{i_{n_0}}$.
 Since $\|x_{i_{n_0}}-y_{i_{n_0}}\|\geq\varepsilon_0$, we derive a contradiction with the fact that $x_{i_{n_0}}^*$ exposes $x_{i_{n_0}}$. Consequently, $L$ is infinite. 
 
 Now, let $\mathcal U$ be a free ultrafilter on $I$ with $L\in \mathcal U$. Define $y_i:=y_{i_n}$ if $i=i_n$ and $y_i=0$ otherwise. First, note that \[ \Vert (x_i)_\mathcal U-(y_i)_\mathcal U\Vert =\lim_\mathcal U\Vert x_i-y_i\Vert\geq \varepsilon_0\]
 since the set $\{i\in I: \Vert x_i-y_i\Vert\geq \varepsilon_0\}$ belongs to $\mathcal U$. 

On the other hand, we claim that $\<(x_i^*)_{\mathcal U}, (y_i)_{\mathcal U}\>=\sup_{C_\mathcal U} (x_i^*)_{\mathcal U}$. Indeed, Lemma~\ref{sup} implies that
$$\sup_{C_\mathcal U} (x^*_i)_\mathcal U=\lim_\mathcal U \sup_C x_i^*=\lim_\mathcal U x_i^*(x_i),$$
so let us prove that $\langle(x_i^*)_\mathcal U,(y_i)_\mathcal U\rangle\geq \lim_\mathcal U x_i^*(x_i)$. To this end, pick $\varepsilon>0$. The set
$$B:=\{j\in I: \vert x_j^*(y_j)-\lim_\mathcal U x_i^*(y_i)\vert<\varepsilon\}$$
belongs to $\mathcal U$. Now, given $p\in\mathbb N$, find $$j\in B\cap L\cap \{k\in I: \vert x_k^*(x_k)-\lim_\mathcal U x_i^*(x_i)\vert<\varepsilon\}\setminus \{i_1,\ldots, i_p\}$$ (the previous set is non-empty because, actually, it belongs to $\mathcal U$). Now we have
$$\lim_\mathcal U x_i^*(y_i)>x_j^*(y_j)-\varepsilon>x_j^*(x_j)-\frac{1}{p}-\varepsilon>\lim_\mathcal U x_i^*(x_i)-\frac{1}{p}-2\varepsilon.$$
The arbitrariness of $p$ and $\varepsilon$ conclude that $\langle(x_i^*)_\mathcal U,(y_i)_\mathcal U\rangle=\sup_{C_\mathcal U} (x_i)_\mathcal U$. This shows that $(x_i)_{\mathcal U}$ is not exposed by $(x_i^*)_{\mathcal U}$ and finishes the proof.
\end{proof}

Now, we focus on the case of elements of the form $\J(x)$ for $x\in C$.    
    
\begin{coro}\label{improvement}
Let $C$ be a bounded convex subset of a Banach space $X$ and $\mathcal U$ be a free ultrafilter on an infinite set $I$. Let $x\in C$. Then
\begin{enumerate}
    \item[(a)] if $\J(x)\in\exp_{(X^*)_\mathcal U}(C_\mathcal U)$, then $x\in\exp(C)$;
    \item[(b)] $\J(x)\in\strexp_{(X^*)_\mathcal U}(C_\mathcal U)$ if and only if $x\in\strexp(C)$.
\end{enumerate}
\end{coro}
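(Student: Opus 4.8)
The statement to prove is Corollary~\ref{improvement}, which concerns points of the special form $\J(x)$, and the natural strategy is to \emph{reduce it to the already-established uniform characterisation in Theorem~\ref{theo:carauniexposed}}. The key observation is that when all the coordinates are equal to a single point $x$, exposed/uniformly-strongly-exposed by a single functional $x^*$, the uniformity conditions in that theorem become vacuous, so the abstract equivalences collapse into the desired scalar statements about $x$ in $C$ itself. Concretely, for part~(b) I would take the constant families $x_i:=x$ and $x_i^*:=x^*$, where $x^*\in X^*$ is the functional witnessing $x\in\strexp(C)$; then $\{x_i\}$ is trivially uniformly strongly exposed by $\{x_i^*\}$ (the single modulus $\alpha_\varepsilon$ coming from the strong exposition of $x$ works for every $i$ at once), so Theorem~\ref{theo:carauniexposed}~(i)$\Rightarrow$(ii) yields that $(x^*)_\mathcal U=\J(x^*)$ strongly exposes $\J(x)$ in $C_\mathcal U$, giving $\J(x)\in\strexp_{(X^*)_\mathcal U}(C_\mathcal U)$. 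This handles one implication of (b).

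\textbf{The converses.} For the reverse implication in~(b), suppose $\J(x)\in\strexp_{(X^*)_\mathcal U}(C_\mathcal U)$, witnessed by some $(x_i^*)_\mathcal U\in(X^*)_\mathcal U$. I would first show $\J(x)\in\exp_{(X^*)_\mathcal U}(C_\mathcal U)$ forces $x\in\exp(C)$, which is exactly part~(a); then refine to strong exposition. For~(a), the hypothesis gives $\langle(x_i^*)_\mathcal U,\J(x)\rangle>\langle(x_i^*)_\mathcal U,(y_i)_\mathcal U\rangle$ for every $(y_i)_\mathcal U\neq\J(x)$ in $C_\mathcal U$; unwinding via $\langle(x_i^*)_\mathcal U,\J(x)\rangle=\lim_\mathcal U x_i^*(x)$, I want to produce a single functional $u^*\in X^*$ exposing $x$ in $C$. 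The natural candidate is $u^*:=\lim_\mathcal U x_i^*$ in the weak-$*$ topology, i.e.\ defining $u^*(z):=\lim_\mathcal U x_i^*(z)$ for $z\in X$ (this limit exists since $\|x_i^*\|$ is bounded and each coordinate limit exists by compactness of bounded intervals). For any $y\in C\setminus\{x\}$, applying the exposition of $\J(x)$ to the constant family $(y)_\mathcal U=\J(y)$ gives $\lim_\mathcal U x_i^*(x)>\lim_\mathcal U x_i^*(y)$, that is $u^*(x)>u^*(y)$; and $u^*(x)=\sup_C u^*$ follows because $\lim_\mathcal U x_i^*(x)=\sup_{C_\mathcal U}(x_i^*)_\mathcal U\geq u^*(z)$ for every $z\in C$ via Lemma~\ref{sup}. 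Thus $u^*$ exposes $x$ in $C$.

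\textbf{Upgrading to strong exposition.} For the strong version in~(b), I keep the same $u^*=\lim_\mathcal U x_i^*$ and must show that $x^*(x_n)\to u^*(x)$ with $x_n\in C$ forces $x_n\to x$. Suppose not: there is $\varepsilon_0>0$ and a sequence $(x_n)\subset C$ with $u^*(x_n)\to u^*(x)$ but $\|x_n-x\|\geq\varepsilon_0$. I would feed this sequence into $C_\mathcal U$ to contradict the strong exposition of $\J(x)$ by $(x_i^*)_\mathcal U$: using that $x_i^*(x_n)$ is controlled on large sets of $\mathcal U$ near $u^*(x_n)$, construct an element $(w_i)_\mathcal U\in C_\mathcal U$ whose value under $(x_i^*)_\mathcal U$ approaches $\sup_{C_\mathcal U}(x_i^*)_\mathcal U=\lim_\mathcal U x_i^*(x)=u^*(x)$, yet which stays at distance $\geq\varepsilon_0$ from $\J(x)$, contradicting that a strongly exposing functional forces norm convergence along slices. \textbf{The main obstacle} is precisely this last diagonalisation: passing from the scalar convergence $u^*(x_n)\to u^*(x)$ of the \emph{averaged} functional back to convergence of the individual $x_i^*(x_n)$ on ultrafilter-large sets, so as to build a genuine element of $C_\mathcal U$ in the slice. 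The care needed is to interleave the indices $n$ with the coordinates $i$—choosing, for each $n$, an ultrafilter-large set of indices $i$ on which $x_i^*$ nearly attains its supremum at $x_n$—and then to select a single representative $(w_i)$ using a CI-type exhaustion or an inductive choice as in the proof of Theorem~\ref{theo:carauniexposed}. I expect the bookkeeping to mirror the final paragraph of that theorem's proof closely.
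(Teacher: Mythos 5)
Parts (a) and the forward implication of (b) are correct and coincide with the paper's argument: (a) is proved exactly as you do, by passing to the weak$^*$ limit $u^*=w^*\text{-}\lim_{\mathcal U}x_i^*$ and testing against $\J(y)$ for $y\in C\setminus\{x\}$, and the implication $x\in\strexp(C)\Rightarrow\J(x)\in\strexp_{(X^*)_{\mathcal U}}(C_{\mathcal U})$ is obtained, as in the paper, by feeding the constant families $x_i=x$, $x_i^*=x^*$ into Theorem~\ref{theo:carauniexposed}\,(i)$\Rightarrow$(ii).

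The converse of (b), however, has a genuine gap. Your plan is to reach a contradiction by assembling a \emph{single} element $(w_i)_{\mathcal U}\in C_{\mathcal U}$ which attains $\sup_{C_{\mathcal U}}(x_i^*)_{\mathcal U}$ yet satisfies $\|(w_i)_{\mathcal U}-\J(x)\|\geq\varepsilon_0$, via a ``CI-type exhaustion or an inductive choice as in the proof of Theorem~\ref{theo:carauniexposed}''. Neither device is available here: the corollary assumes only that $\mathcal U$ is \emph{free}, not countably incomplete, and in Theorem~\ref{theo:carauniexposed}\,(iii)$\Rightarrow$(i) the diagonal element can be built only because the ultrafilter is \emph{chosen there a posteriori} to contain the countable set of bad indices, whereas in the corollary $\mathcal U$ is given in advance. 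Concretely, if $\mathcal U$ is free but not CI (such ultrafilters are not excluded by the hypotheses; they merely require large cardinals, cf.\ Section~\ref{subsec:filters}), then $\mathcal U$ is closed under countable intersections, so the decreasing sets $B_n:=\bigcap_{m\leq n}\{i\in I:\ x_i^*(x_m)>u^*(x_m)-\tfrac1m\}$ all lie in $\mathcal U$ \emph{and so does} $\bigcap_n B_n$; the $\mathcal U$-limit of $x_i^*(w_i)$ is then decided on this intersection, where the interleaving prescribes no value of $w_i$, and the construction collapses. This is precisely the obstruction that forces the CI hypothesis in Theorem~\ref{th:jstrexp}; note that your contradiction only uses that $(x_i^*)_{\mathcal U}$ \emph{exposes} $\J(x)$, so a successful construction would prove Theorem~\ref{th:jstrexp} for arbitrary free ultrafilters, which the paper obtains only under CI. The repair is short and removes all bookkeeping: strong exposition is a statement about \emph{sequences} in $C_{\mathcal U}$, so apply it to the constant-coordinate sequence $\J(x_n)$. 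Since
\[
\langle (x_i^*)_{\mathcal U},\J(x_n)\rangle=\lim_{\mathcal U}x_i^*(x_n)=u^*(x_n)\longrightarrow u^*(x)=\langle (x_i^*)_{\mathcal U},\J(x)\rangle ,
\]
the assumption that $(x_i^*)_{\mathcal U}$ strongly exposes $\J(x)$ yields $\|x_n-x\|=\|\J(x_n)-\J(x)\|\to 0$, so $u^*$ strongly exposes $x$. This works for every free ultrafilter and needs no diagonalisation at all.
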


\begin{proof}
$(a)$ Suppose that $\J(x)$ is exposed by $(x^*_i)_\mathcal U$. By weak*-compactness of $B_{X^*}$, define $x^*=w^*$-$\lim_\mathcal Ux^*_i$.  Let $y\in A$ such that $y\neq x$. We have  $$x^*(x)=\lim_\mathcal Ux^*_i(x)=\langle (x^*_i)_\mathcal U, \J(x)\rangle>\langle (x^*_i)_\mathcal U, \J(y)\rangle=\lim_\mathcal Ux^*_i(y)=x^*(y),$$ i.e. $x^*$ exposes $x$.

$(b)$ Follows directly from Theorem~\ref{theo:carauniexposed}. 
\end{proof}

If $\mathcal U$ is supposed to be CI, the previous corollary can be improved:

\begin{thm}\label{th:jstrexp}
Let $C$ be a bounded convex subset of a Banach space $X$, $\mathcal U$ be a CI ultrafilter on an infinite set $I$ and let $x\in C$. The following assertions are equivalent:
\begin{enumerate}
    \item[(i)] $\J(x)\in\strexp_{(X^*)_\mathcal U}(C_\mathcal U)$; 
    \item[(ii)] $\J(x)\in\exp_{(X^*)_\mathcal U}(C_\mathcal U)$;
    \item[(iii)] $x\in\strexp(C)$.
\end{enumerate}
\end{thm}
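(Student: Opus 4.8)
The plan is to prove the cycle of implications, using Corollary~\ref{improvement} to reduce most of the work to a single new implication. Clearly $(i)\implies(ii)$ is trivial, and $(iii)\implies(i)$ is exactly part $(b)$ of Corollary~\ref{improvement}, which holds for any free ultrafilter and in particular for a CI one. Thus the only genuinely new content is $(ii)\implies(iii)$: I must show that if $\J(x)$ is merely exposed by some functional $(x_i^*)_\mathcal U\in(X^*)_\mathcal U$, then $x$ is strongly exposed in $C$, and here the hypothesis that $\mathcal U$ is CI should be essential. The intuition is that this mirrors Theorem~\ref{th:jstrext}, where CI let us upgrade ``extreme'' to ``strongly extreme'' for points $\J(x)$; here we want to upgrade ``exposed'' to ``strongly exposed''.

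First I would assume $(ii)$, so there is $(x_i^*)_\mathcal U$ exposing $\J(x)$ in $C_\mathcal U$. By part $(a)$ of Corollary~\ref{improvement}, the weak*-limit $x^*=w^*\text{-}\lim_\mathcal U x_i^*$ already exposes $x$ in $C$, so $x$ is at least exposed; the task is to promote this to strong exposition. Suppose toward a contradiction that $x^*$ does not strongly expose $x$. Then there is $\varepsilon_0>0$ and a sequence $(w_n)_n\subset C$ with $x^*(w_n)\to x^*(x)=\sup_C x^*$ but $\|x-w_n\|\geq\varepsilon_0$ for all $n$; by passing to a subsequence I may arrange $x^*(w_n)>\sup_C x^*-\frac{1}{n}$. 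The idea is now to spread these ``bad'' points $w_n$ across the index set $I$ using the CI structure, exactly as in the proofs of Proposition~\ref{fact:closed} and Theorem~\ref{th:jstrext}. Concretely, fix a strictly decreasing sequence $(I_n)_n\subset\mathcal U$ with $I_1=I$ and $\bigcap_n I_n=\emptyset$, and define $y_i:=w_n$ whenever $i\in I_n\setminus I_{n+1}$. Then $(y_i)_\mathcal U\in C_\mathcal U$ and $\|\J(x)-(y_i)_\mathcal U\|=\lim_\mathcal U\|x-y_i\|\geq\varepsilon_0$, so $(y_i)_\mathcal U\neq\J(x)$.

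The decisive step is to show $\langle(x_i^*)_\mathcal U,(y_i)_\mathcal U\rangle=\sup_{C_\mathcal U}(x_i^*)_\mathcal U$, which contradicts the fact that $(x_i^*)_\mathcal U$ exposes $\J(x)$ at a \emph{different} point $(y_i)_\mathcal U$. By Lemma~\ref{sup}, $\sup_{C_\mathcal U}(x_i^*)_\mathcal U=\lim_\mathcal U\sup_C x_i^*=\lim_\mathcal U x_i^*(x_i)=\langle(x_i^*)_\mathcal U,\J(x)\rangle$, so it suffices to prove $\lim_\mathcal U x_i^*(y_i)=\lim_\mathcal U x_i^*(x)$. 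The subtlety is that $(x_i^*)_\mathcal U$ need not equal $\J(x^*)$, so $x_i^*(y_i)$ is not simply close to $x^*(y_i)$; this is where I expect the main obstacle to lie. To handle it I would exploit that on each block $I_n\setminus I_{n+1}$ the vector $y_i$ is the constant $w_n$, and estimate $x_i^*(w_n)$ against $\sup_C x_i^*$ using $x^*(w_n)>\sup_C x^*-\frac1n$ together with the weak*-convergence $x_i^*\to x^*$ along $\mathcal U$. A clean way is to run the $\lim_\mathcal U$ through the block structure: for each $\varepsilon>0$ choose $n_0$ with $\frac{1}{n_0}<\varepsilon$, and argue that the set of indices $i\in I_{n_0}$ on which $x_i^*(y_i)>\sup_C x_i^*-2\varepsilon$ (equivalently, $x_i^*(y_i)$ is within $2\varepsilon$ of the exposing value) belongs to $\mathcal U$, by combining the slab estimate $x^*(w_n)>\sup_C x^*-\frac1n$ with an $\mathcal U$-large set on which $x_i^*$ is $\varepsilon$-close to $x^*$ both at $w_n$ and in supremum. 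Pushing $\varepsilon\to0$ forces $\lim_\mathcal U x_i^*(y_i)=\sup_{C_\mathcal U}(x_i^*)_\mathcal U$, completing the contradiction. Once this is established, $x^*$ strongly exposes $x$, giving $(iii)$ and closing the cycle.
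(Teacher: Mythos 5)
You set up the theorem exactly as the paper does: (i)$\Rightarrow$(ii) is trivial, (iii)$\Rightarrow$(i) is Corollary~\ref{improvement}(b), the real content is (ii)$\Rightarrow$(iii), and your plan for it (pass to $x^*=w^*\text{-}\lim_\mathcal U x_i^*$, transplant the bad sequence $(w_n)$ into $C_\mathcal U$ along a CI chain, then contradict exposedness) is also the paper's plan. The gap is in your decisive step. You fix the chain $(I_n)$ \emph{before} consulting the functionals and put $y_i:=w_n$ on $I_n\setminus I_{n+1}$, and then want to derive $\lim_\mathcal U x_i^*(y_i)=x^*(x)$ from the slab estimates plus weak*-convergence. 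But weak*-convergence along $\mathcal U$ only controls $x_i^*$ at each \emph{fixed} point: the set $B_n^\varepsilon:=\{i\in I:|x_i^*(w_n)-x^*(w_n)|<\varepsilon\}$ lies in $\mathcal U$ for each fixed $n$, whereas you need the ``diagonal'' set $\bigcup_n\bigl[(I_n\setminus I_{n+1})\cap B_n^\varepsilon\bigr]$ to lie in $\mathcal U$. Nothing forces this: $I_n\setminus I_{n+1}\notin\mathcal U$, so each $B_n^\varepsilon$, although $\mathcal U$-large, may miss exactly the block it is paired with. Concretely, in $X=\ell_2$ take $w_n=(1-\tfrac{1}{n+1})e_1+e_{n+1}$, $C=\cconv(\{e_1\}\cup\{w_n:n\geq 1\})$, $x=e_1$, $x^*=e_1$ (which exposes but does not strongly expose $e_1$ in $C$), $I=\mathbb N$, $I_n=\{n,n+1,\dots\}$, and $x_i^*:=e_1-\tfrac12 e_{i+1}$. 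Then $x_i^*\to x^*$ weak*, $\sup_C x_i^*=\sup_C x^*=1$, and $x^*(w_n)>1-\tfrac1n$, so every ingredient your sketch invokes is present; moreover $B_n^\varepsilon=I\setminus\{n\}$, which is $\mathcal U$-large but misses precisely its own block $\{n\}$. The construction gives $y_i=w_i$ and $\lim_\mathcal U x_i^*(y_i)=\lim_i\bigl(1-\tfrac{1}{i+1}-\tfrac12\bigr)=\tfrac12\neq 1=x^*(x)$, so the inference you describe cannot be completed as stated.

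The missing idea --- and the paper's key move --- is to choose the blocks \emph{adapted to the functionals} rather than in advance. After passing to a subsequence of $(w_n)$ so that $|x^*(x)-\lim_{\mathcal U,i}x_i^*(w_k)|<\tfrac1k$ for all $k$, set
\[ A_n:=I_n\cap\left\{i\in I: \left|x^*(x)-x_i^*(w_k)\right|<\frac1k\ \ \forall k\leq n\right\}. \]
Each $A_n\in\mathcal U$ (finitely many conditions, each defining a set in $\mathcal U$ by weak*-convergence at the fixed points $w_k$), the sequence $(A_n)$ decreases, and $\bigcap_n A_n\subset\bigcap_n I_n=\emptyset$; now define $y_i:=w_k$ for $i\in A_k\setminus A_{k+1}$, and $y_i$ arbitrary in $C$ off $A_1$. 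The closeness of $x_i^*(y_i)$ to $x^*(x)$ is then built into the block containing $i$: for $i\in A_{k_0}$ there is $k\geq k_0$ with $i\in A_k\setminus A_{k+1}$, whence $|x^*(x)-x_i^*(y_i)|<\tfrac1k\leq\tfrac1{k_0}$. Therefore $\langle(x_i^*)_\mathcal U,(y_i)_\mathcal U\rangle=x^*(x)=\langle(x_i^*)_\mathcal U,\J(x)\rangle$, and exposedness forces $(y_i)_\mathcal U=\J(x)$, contradicting $\|x-y_i\|\geq\varepsilon_0$ on $A_1$. (With this route you do not even need Lemma~\ref{sup}: equality of the values of $(x_i^*)_\mathcal U$ at two points, one of which is exposed, already gives the contradiction.)
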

    
\begin{proof}
In view of the previous corollary, we only have to prove that if $\J(x)\in\exp_{(X^*)_\mathcal U}(C_\mathcal U)$ then $x\in\strexp(C)$. Let us suppose that $\J(x)$ is exposed by $(x^*_i)_\mathcal U$. By weak*-compactness of $B_{X^*}$, define $x^*=w^*$-$\lim_\mathcal Ux^*_i$. 
The proof of $(a)$ in the previous corollary shows that $x^*$ exposes $x$, we will show that actually $x^*$ strongly exposes $x$. Consider a sequence $(x_n)_n\subset C$ such that $x^*(x_n)\to x^*(x)$ and suppose by contradiction that $(x_n)_n$ does not converge to $x$. Then there exist a subsequence $(x_{n_k})_k$ of $(x_n)_n$ and $\beta>0$ such that $\|x_{n_k}-x\|>\beta$ for all $k\in\mathbb N$. Note that $$|x^*(x)-\lim_{\mathcal U,i}x^*_i(x_{n_k})|=|x^*(x)-x^*(x_{n_k})|\xrightarrow[k]{} 0,$$ so by considering a further subsequence if necessary, we can also suppose that $|x^*(x)-\lim_{\mathcal U,i}x^*_i(x_{n_k})|<\frac{1}{k}$ for all $k\geq 1$. For all $n\geq 1$, we define $$A_n=\left\{i\in I\ :\ |x^*(x)-x^*_i(x_{n_k})|<\frac{1}{k}\ \forall k\leq n\right\}\cap I_n\in\mathcal U$$ where $(I_n)_n$ is a decreasing sequence of sets in $\mathcal U$ such that $\bigcap_nI_n=\emptyset$. Now we define $(y_i)_\mathcal U\in C_\mathcal U$ by $y_i=x_{n_k}$ if $k\in A_k\setminus A_{k+1}$ for some $k\geq 1$ and $y_i=0$ of $i\in I\setminus A_1$. One can verify that  $\lim_{\mathcal U, i} x^*_i(y_i)=x^*(x)$,
which means exactly that $\langle (x^*_i)_\mathcal U, (y_i)_\mathcal U\rangle=\langle (x_i^*)_{\mathcal U}, \J(x) \rangle$.
Since $\J(x)$ is exposed by $(x^*_i)_\mathcal U$, we deduce that $\J(x)=(y_i)_\mathcal U$. We obtain that $\lim_{\mathcal U, i}\|x-y_i\|=0$, but it is easily seen that it contradicts the fact that $\|x_{n_k}-x\|>\beta$ for all $k\in\mathbb N$.
\end{proof}

If $C$ is a bounded convex set, we sum up the properties linking $x$ and $\J(x)$ in the following graph of implications:
\vspace{0.2cm}

\begin{center}
\adjustbox{scale=0.9,center}{%
\begin{tikzcd}[arrows=Rightarrow]
\text{$x\in\strexp(C)$} \arrow[r] \arrow[d,Leftrightarrow] & \text{$x\in\dent(C)$} \arrow[r] \arrow[d,Leftrightarrow] & \text{$x\in\strext(C)$} \arrow[r] \arrow[d,Leftrightarrow] & \text{$x\in\ext(C)$}  \\
\text{$\J(x)\in\strexp_{(X^*)_\mathcal U}(C_\mathcal U)$} \arrow[r] & \text{$\J(x)\in\dent_{(X^*)_\mathcal U}(C_\mathcal U)$} \arrow[r] & \text{$\J(x)\in\strext(C_\mathcal U)$} \arrow[r,Leftrightarrow] & \text{$\J(x)\in\ext(C_\mathcal U)$} \arrow[u]\\
\end{tikzcd}}
\end{center}

Note that none of the previous implications can be reversed in the general case (since there exist extreme points which are not strongly extreme, strongly extreme points which are not denting and denting points which are not strongly exposed).

\begin{thm}\label{theo:expequistronglyCI}
Let $C$ be a bounded convex subset of a Banach space $X$, and $\mathcal U$ be a CI ultrafilter on an infinite set $I$. Then 
$\exp_{(X^*)_\mathcal U}(C_\mathcal U)\subset\strexp(C_\mathcal U)$.
\end{thm}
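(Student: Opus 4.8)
The plan is to prove the stronger statement that the exposing functional itself strongly exposes, i.e. if $\phi=(x_i^*)_\mathcal U\in(X^*)_\mathcal U$ exposes $w=(x_i)_\mathcal U$ in $C_\mathcal U$, then $\phi$ strongly exposes $w$, so that $w\in\strexp_{(X^*)_\mathcal U}(C_\mathcal U)\subseteq\strexp(C_\mathcal U)$. I would argue by contradiction, mimicking the diagonalisation used in the proof of Theorem~\ref{theo:extequistronglyCI} but replacing extremality by the \emph{uniqueness} of the exposed maximiser. As bookkeeping, set $\lambda:=\sup_{C_\mathcal U}\phi$; Lemma~\ref{sup} gives $\lambda=\lim_\mathcal U\sup_C x_i^*$, while $\langle\phi,w\rangle=\lim_\mathcal U x_i^*(x_i)$ by definition of the pairing, and since $\phi$ exposes $w$ this common value equals $\lambda$ and $w$ is the unique point of $C_\mathcal U$ at which $\phi$ attains $\lambda$.

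Next, suppose $\phi$ does not strongly expose $w$. Then there are a sequence $(w^n)_n=((w_i^n)_\mathcal U)_n\subset C_\mathcal U$ and $\beta>0$ with $\langle\phi,w^n\rangle\to\lambda$ but $\|w^n-w\|\geq\beta$ for all $n$; passing to a subsequence I may assume $\langle\phi,w^n\rangle>\lambda-\frac1n$. Reading these two facts through $\langle\phi,w^n\rangle=\lim_\mathcal U x_i^*(w_i^n)$ and $\|w^n-w\|=\lim_\mathcal U\|w_i^n-x_i\|$, one checks that the sets
\[A_n:=\left\{i\in I:\ x_i^*(w_i^n)>\lambda-\tfrac1n\right\}\quad\text{and}\quad B_n:=\left\{i\in I:\ \|w_i^n-x_i\|>\tfrac{\beta}{2}\right\}\]
both lie in $\mathcal U$.

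Now I would invoke countable incompleteness: fix a decreasing sequence $(I_n)_n\subset\mathcal U$ with $\bigcap_n I_n=\emptyset$ and set $K_n:=I_n\cap\bigcap_{m\leq n}(A_m\cap B_m)\in\mathcal U$, a decreasing family with empty intersection. Define $(y_i)_\mathcal U\in C_\mathcal U$ by $y_i:=w_i^n$ when $i\in K_n\setminus K_{n+1}$ and $y_i:=x_i$ for $i\notin K_1$. For $i\in K_{n_0}$ one has $y_i=w_i^n$ with $n\geq n_0$, so $x_i^*(y_i)>\lambda-\frac1{n_0}$; combined with $x_i^*(y_i)\leq\sup_C x_i^*$ and Lemma~\ref{sup} this forces $\langle\phi,(y_i)_\mathcal U\rangle=\lim_\mathcal U x_i^*(y_i)=\lambda$. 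On the other hand $K_1\subseteq\{i:\|y_i-x_i\|>\beta/2\}$, whence $\|(y_i)_\mathcal U-w\|\geq\beta/2>0$ and $(y_i)_\mathcal U\neq w$. Thus $(y_i)_\mathcal U$ is a second point of $C_\mathcal U$ where $\phi$ attains $\lambda$, contradicting that $\phi$ exposes $w$; hence $\phi$ strongly exposes $w$.

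The hard part is precisely the passage from the approximating sequence $(w^n)_n$, which is indexed by $n$ and effectively lives one level above $C_\mathcal U$, to a single honest element $(y_i)_\mathcal U$ of $C_\mathcal U$ that still maximises $\phi$ yet remains bounded away from $w$. This is exactly what countable incompleteness provides through the nested family $(K_n)_n$, and it is the step that would break down for an arbitrary free (non-CI) ultrafilter.
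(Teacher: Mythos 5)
Your proof is correct, and it takes a genuinely different route from the paper's --- one that in fact yields a strictly stronger conclusion. The paper argues indirectly: it passes to the double ultrapower, showing via Lemma~\ref{limit} and a CI-diagonalisation that the constant-in-$j$ functional $(x^*_{i,j})_{\mathcal U\times\mathcal U}$ (with $x^*_{i,j}:=x^*_i$) exposes $\J((x_i)_\mathcal U)$ in $C_{\mathcal U\times\mathcal U}$, and then invokes Theorem~\ref{th:jstrexp} applied to the space $X_\mathcal U$ and the set $C_\mathcal U$. Used as a black box, Theorem~\ref{th:jstrexp} only returns a strongly exposing functional in the full dual $(X_\mathcal U)^*$ (in its proof this functional arises as a weak$^*$-limit of coordinate functionals), which is why the paper concludes only membership in $\strexp(C_\mathcal U)$, and why the remark following the theorem states that the inclusion $\exp_{(X^*)_\mathcal U}(C_\mathcal U)\subset\strexp_{(X^*)_\mathcal U}(C_\mathcal U)$ is not reachable by their techniques. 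Your argument bypasses the double ultrapower and Theorem~\ref{th:jstrexp} altogether: you negate strong exposure by the \emph{fixed} functional $\phi=(x^*_i)_\mathcal U$ and run a single CI-diagonalisation inside $C_\mathcal U$ to manufacture a second maximiser of $\phi$. All steps check out: $A_n,B_n\in\mathcal U$ because a $\mathcal U$-limit strictly above a threshold places the corresponding index set in $\mathcal U$; the sets $K_n$ form a decreasing family in $\mathcal U$ with empty intersection, so every $i\in K_1$ lies in exactly one difference $K_n\setminus K_{n+1}$ and $(y_i)_{i\in I}\in C^I$ is well defined; the two-sided bound $\lambda-\tfrac{1}{n_0}\leq\lim_\mathcal U x^*_i(y_i)\leq\lim_\mathcal U\sup_C x^*_i=\lambda$ (Lemma~\ref{sup}) gives $\langle\phi,(y_i)_\mathcal U\rangle=\lambda$; and $K_1\subseteq\{i:\|y_i-x_i\|>\beta/2\}$ gives $\|(y_i)_\mathcal U-w\|\geq\beta/2$, so $(y_i)_\mathcal U\neq w$, contradicting exposedness. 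Thus you actually prove $\exp_{(X^*)_\mathcal U}(C_\mathcal U)\subset\strexp_{(X^*)_\mathcal U}(C_\mathcal U)$, precisely the improvement the authors single out as open (it is consistent with their machinery: tracing the constant family through the proof of Theorem~\ref{th:jstrexp} would also recover it, but the paper never does so). What the paper's route buys is reuse of already-established results (the product-ultrafilter identification and Theorem~\ref{th:jstrexp}); what yours buys is a shorter, self-contained argument (modulo Lemma~\ref{sup}) together with the sharper statement that the exposing functional itself strongly exposes.
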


\begin{proof}
Let $(U_n)_n\subset\mathcal U$ be a strictly decreasing sequence of sets such that $\bigcap_{n>0}U_n=\emptyset$. Let $(x_i)_\mathcal U\in\exp_{(X^*)_\mathcal U}(C_\mathcal U)$ be exposed by $(x^*_i)_\mathcal U\in (X^*)_\mathcal U$. We need to show that $(x_i)_\mathcal U\in\strexp(C_\mathcal U)$. By Theorem \ref{th:jstrexp}, it is enough to prove that $\J((x_i)_\mathcal U)\in\exp_{(X^*)_{\mathcal U\times\mathcal U}}(C_{\mathcal U\times\mathcal U})$. Define $(x^*_{i,j})_{\mathcal U\times\mathcal U}\in(X^*)_{\mathcal U\times\mathcal U}$ by $x^*_{i,j}=x^*_i$ for all $i,j\in I$ and let us prove that $(x^*_{i,j})_{\mathcal U\times\mathcal U}$ exposes $\J((x_i)_\mathcal U)$.

First note that, using Lemma \ref{limit}, we have that $$\langle (x^*_{i,j})_{\mathcal U\times\mathcal U},\J((x_i)_\mathcal U)\rangle=\lim_{\mathcal U\times\mathcal U}x^*_i(x_i)=\lim_\mathcal U x^*_i(x_i)=\langle (x^*_i)_\mathcal U, (x_i)_\mathcal U\rangle.$$ Take $(y_{i,j})_{\mathcal U\times\mathcal U}\in C_{\mathcal U\times\mathcal U}$ arbitrary. Using again Lemma \ref{limit} and the fact $(x_i)_\mathcal U$ is exposed by $(x^*_i)_\mathcal U$, we obtain that
\begin{align*}
\langle (x^*_{i,j})_{\mathcal U\times\mathcal U},(y_{i,j})_{\mathcal U\times\mathcal U}\rangle&=\lim_{\mathcal U,j}\lim_{\mathcal U,i}\langle x^*_i,y_{i,j}\rangle =\lim_{\mathcal U,j}\langle (x^*_i)_\mathcal U, (y_{i,j})_{\mathcal U,i}\rangle\\
&\leq \langle (x^*_i)_\mathcal U,(x_i)_\mathcal U\rangle=\langle (x^*_{i,j})_{\mathcal U\times\mathcal U},\J((x_i)_\mathcal U)\rangle,
\end{align*}
proving that $(x^*_{i,j})_{\mathcal U\times\mathcal U}$ reaches his maximum on $C_{\mathcal U\times\mathcal U}$ at $\J((x_i)_\mathcal U)$.

Now suppose by contradiction that there exists $(y_{i,j})_{\mathcal U\times\mathcal U}\in C_{\mathcal U\times\mathcal U}$ such that $(y_{i,j})_{\mathcal U\times\mathcal U}\neq \J((x_i)_\mathcal U)$ and 
$$\langle (x^*_{i,j})_{\mathcal U\times\mathcal U},(y_{i,j})_{\mathcal U\times\mathcal U}\rangle=\langle (x^*_{i,j})_{\mathcal U\times\mathcal U}, \J((x_i)_\mathcal U)\rangle.$$ Let $\varepsilon_0>0$ such that $\|(y_{i,j})_{\mathcal U\times\mathcal U}-\J((x_i)_\mathcal U)\|>\varepsilon_0$. We can also assume that $\| y_{i,j}-x_i\|>\varepsilon_0$ holds for every $i,j\in I$. We have that $$\left\{(i,j)\in I^2\ |\ |x^*_i(y_{i,j})-x^*_i(x_i)|<\frac{1}{n}\right\}\in\mathcal U\times\mathcal U$$ for all $n>0$, that is $$J_n:=\left\{j\in I\ | \left\{i\in I\ |\ |x^*_i(y_{i,j})-x^*_i(x_i)|<\frac{1}{n}\right\}\in\mathcal U\right\}\in\mathcal U.$$ For $j\in I$ and $n>0$, define $I_{n,j}=\left\{i\in I\ |\ |x^*_i(y_{i,j})-x^*_i(x_i)|<\frac{1}{n}\right\}$. Following verbatim the last steps in the proof of Theorem \ref{theo:extequistronglyCI}, we obtain a contradiction.
\end{proof}

It is natural to think that indeed the inclusion $\exp_{(X^*)_\mathcal U}(C_\mathcal U)\subset\strexp_{(X^*)_\mathcal U}(C_\mathcal U)$ holds (obtaining clearly the equality of the above sets). However, our techniques in Theorem \ref{th:jstrexp} does not allow us to conclude it.

\section{Extremality and compactness}\label{seccion:particulares}

In this section we will study the extremality under compactness assumptions. To be more precise, let $X$ be a Banach space and $K\subseteq B_X$ be a convex bounded subset. We will deal with the assumption that $K_\mathcal U$ is weakly compact (see below the definition of super-weakly compact set). Before we enter in details, let us explain why this context, though very restrictive, is interesting for us: looking to our results for denting points and strongly exposed points, we have not been able to completely characterise when a point $(x_i)_\mathcal U$ is a denting (respectively strongly exposed) point because we do not have a good access to the space $(X_\mathcal U)^*$, which differs from $(X^*)_\mathcal U$ if $X$ is not superreflexive.

However, in the particular case of $K_\mathcal U$ being weakly compact this difficulty is overcome thanks to Lemma~\ref{lemma:dentingsubnormante}. For instance, here we obtain that a point $(x_i)_\mathcal U\in K_\mathcal U$ is denting if, and only if, $(x_i)_\mathcal U$ belongs to a sequence of slices of diameter as small as desired where the slices are defined by elements of $(X^*)_\mathcal U$. This difficulty will be overcome in the context of super weakly compact subsets.

Let $K$ be a bounded subset of a Banach space $X$. We say that $K$ is relatively super-weakly compact if $K_\mathcal U$ is relatively weakly compact for all free ultrafilters $\mathcal U$ on an arbitrary infinite set $I$. If furthermore $K$ is weakly-closed, we say that $K$ is super weakly compact. Note that if $K$ is a closed convex symmetric super-weakly compact set then $K$ is the unit ball of the superreflexive space $Y=(\text{span}(K),|\cdot|_K)$ where $|\cdot|_K$ is the Minkowski functional of $K$.


The roots of super weak compactness can be traced back to the work of B.~Beauzamy in the 70's \cite{Beauzamy}. He introduced the notion of \textit{uniformly convex operator}, which turns to be equivalent (under renorming) to the one of \textit{super-weakly compact operator} (see \cite{Heinrich}). In   \cite{dentable},  M.~Raja considered the notion of \textit{finite dentability}, which coincides with the one of super weak  compactness for closed convex bounded sets. The current terminology is introduced in \cite{SWC1}, which also contains a characterization in terms of finite representability of sets that reinforces the parallelism with super-reflexive Banach spaces. Super-weak compactness has became a prolific field of investigation, see e.g. \cite{ChengOtro, LancienRaja, RajaSWCG, Tu}.

    
Recall that every weakly compact convex set is the closed convex hull of its strongly exposed points (see Theorem~8.13 in \cite{BST} for example). In the case of ultrapowers we can say a bit more.

\begin{theo}\label{strexp}
Let $K\subset X$ be a relatively super weakly compact convex set and $\mathcal U$ be a CI ultrafilter on an infinite set $I$. Then $$K_\mathcal U=\overline{\co}(\strexp_{(X^*)_\mathcal U}(K_\mathcal U)).$$
\end{theo}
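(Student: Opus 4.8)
The plan is to reduce the statement to the classical fact that a weakly compact convex set is the closed convex hull of its strongly exposed points, and then to upgrade strong exposition by the full dual to strong exposition by functionals coming from $(X^*)_\mathcal{U}$.

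First I would check that $K_\mathcal{U}$ is a weakly compact convex set. Convexity is inherited from $K$, and since $\mathcal{U}$ is CI, Proposition~\ref{fact:closed} gives that $K_\mathcal{U}$ is norm closed; being convex it is therefore weakly closed, and as $K$ is relatively super weakly compact, $K_\mathcal{U}$ is relatively weakly compact, hence weakly compact. I would also record that $(X^*)_\mathcal{U}$ is a norming (in particular separating) subspace of $(X_\mathcal{U})^*$, so that $(X_\mathcal{U},(X^*)_\mathcal{U})$ is a dual pair and Lemma~\ref{lemma:dentingsubnormante} is available. By Theorem~8.13 in \cite{BST}, $K_\mathcal{U}=\overline{\co}(\strexp(K_\mathcal{U}))$, where $\strexp$ refers to the full dual $(X_\mathcal{U})^*$; since $(X^*)_\mathcal{U}\subseteq (X_\mathcal{U})^*$ yields $\strexp_{(X^*)_\mathcal{U}}(K_\mathcal{U})\subseteq \strexp(K_\mathcal{U})$, the inclusion $\overline{\co}(\strexp_{(X^*)_\mathcal{U}}(K_\mathcal{U}))\subseteq K_\mathcal{U}$ is immediate.

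For the reverse inclusion I would argue by contradiction and separation. Writing $D:=\overline{\co}(\strexp_{(X^*)_\mathcal{U}}(K_\mathcal{U}))$ and assuming $D\neq K_\mathcal{U}$, pick $y\in K_\mathcal{U}\setminus D$. As $D$ is a norm-closed convex subset of the weakly compact set $K_\mathcal{U}$ it is $\sigma(X_\mathcal{U},(X^*)_\mathcal{U})$-compact, and since $(X^*)_\mathcal{U}$ separates points this topology is Hausdorff; exactly as in the proof of Lemma~\ref{lemma:dentingsubnormante}, Hahn--Banach in the dual pair yields $g\in (X^*)_\mathcal{U}$ and $\alpha>0$ with $y\in S:=S(K_\mathcal{U},g,\alpha)$ and $S\cap D=\emptyset$. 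It then suffices to produce a point of $\strexp_{(X^*)_\mathcal{U}}(K_\mathcal{U})$ inside $S$, which contradicts $S\cap D=\emptyset$. Since $K_\mathcal{U}$ has the Radon--Nikodym property (being weakly compact convex) and $K_\mathcal{U}=\overline{\co}(\strexp(K_\mathcal{U}))$, the slice $S$ contains a point $z$ that is strongly exposed in $K_\mathcal{U}$ by some $f\in (X_\mathcal{U})^*$; in particular $z$ is a denting point, hence a point of weak-to-norm continuity, and by Lemma~\ref{lemma:dentingsubnormante} it is $(X^*)_\mathcal{U}$-denting.

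The decisive step --- and the one I expect to be the main obstacle --- is to upgrade $z$ from a full-dual strongly exposed, $(X^*)_\mathcal{U}$-denting point to a genuinely $(X^*)_\mathcal{U}$-strongly exposed point, i.e. to manufacture a single functional in $(X^*)_\mathcal{U}$ that strongly exposes $z$. Once such a $g_0\in (X^*)_\mathcal{U}$ merely exposes $z$, the conclusion is easy: if $g_0(y_n)\to g_0(z)=\sup_{K_\mathcal{U}}g_0$ then any weak cluster point of $(y_n)$ maximises $g_0$, hence equals $z$ by exposition, so $y_n\to z$ weakly, and since $z$ is a point of continuity $y_n\to z$ in norm; thus $g_0$ strongly exposes $z$ and $z\in \strexp_{(X^*)_\mathcal{U}}(K_\mathcal{U})$. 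The difficulty is therefore concentrated in passing from $(X^*)_\mathcal{U}$-denting to $(X^*)_\mathcal{U}$-exposed. I would handle this either by running the Bourgain--Stegall variational principle inside the Banach space $(X^*)_\mathcal{U}$ --- which is legitimate because $K_\mathcal{U}$ is $(X^*)_\mathcal{U}$-dentable (every slice contains arbitrarily small $(X^*)_\mathcal{U}$-slices, combining the RNP of $K_\mathcal{U}$ with Lemma~\ref{lemma:dentingsubnormante}) and weakly compact, so every functional of $(X^*)_\mathcal{U}$ attains its supremum --- or by invoking the self-improvement machinery of Theorem~\ref{th:jstrexp} and Theorem~\ref{theo:expequistronglyCI}, which converts $(X^*)_\mathcal{U}$-exposition into $(X^*)_\mathcal{U}$-strong exposition through the product ultrafilter. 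Verifying that this variational (or self-improvement) argument indeed keeps the exposing functional inside the subspace $(X^*)_\mathcal{U}$ is the heart of the matter.
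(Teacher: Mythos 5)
Your skeleton coincides with the paper's: $K_\mathcal U$ is weakly compact (Proposition~\ref{fact:closed} plus relative super weak compactness), $(X_\mathcal U,(X^*)_\mathcal U)$ is a dual pair, and Hahn--Banach separation in the topology $\sigma(X_\mathcal U,(X^*)_\mathcal U)$ reduces everything to showing that every slice of $K_\mathcal U$ determined by an element of $(X^*)_\mathcal U$ meets $\strexp_{(X^*)_\mathcal U}(K_\mathcal U)$; the paper abstracts exactly this into Lemma~\ref{lemma:dualpair}, valid for any weakly compact convex set and any dual pair $(X,Z)$. But your write-up stops precisely where the content is. The step you yourself flag as ``the heart of the matter''---producing a strongly exposing functional that lies in the subspace $(X^*)_\mathcal U$---is never carried out, and nothing else in the proposal substitutes for it. The paper discharges it with a density statement: by a slight modification of Bourgain's Theorem~8 \cite{Bourgain}, the set of functionals of $Z$ which strongly expose a point of $K$ is norm-dense in $Z$. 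The modification is legitimate because every subset of $K$ is $Z$-dentable (weak compactness together with Lemma~\ref{lemma:dentingsubnormante}), so Bourgain's construction can be run entirely with $Z$-slices and its perturbations never leave $Z$. With that density in hand, separation closes the argument at once: a separating $x^*\in Z$ with $\sup_{K}x^*>\sup_{\strexp_Z(K)}x^*$ can be perturbed (both suprema are Lipschitz in the functional, so the strict inequality survives small perturbations) to a $Z$-strongly exposing $y^*$ whose exposed point lies in $\strexp_Z(K)$ yet violates the inequality. Your first suggested strategy---``running Bourgain--Stegall inside $(X^*)_\mathcal U$''---is exactly this, but asserting that it ``is legitimate'' is the verification you were supposed to supply; as it stands the proposal has a genuine gap at the only hard point.

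Two further remarks. First, the detour through the point $z\in S$ strongly exposed by some $f\in(X_\mathcal U)^*$ misdirects the argument: you then try to upgrade \emph{that particular point}, but nothing guarantees that a point strongly exposed by a full-dual functional is exposed by \emph{any} element of $(X^*)_\mathcal U$. The variational principle does not fix the point: it perturbs the slice functional $g$ within $(X^*)_\mathcal U$ and yields a possibly different strongly exposed point, which still lies in $S$ once the perturbation is small compared with the slice width. So the full-dual point $z$, the appeal to the RNP, and its $(X^*)_\mathcal U$-dentability can all be deleted without loss. Second, your alternative strategy via Theorems~\ref{th:jstrexp} and~\ref{theo:expequistronglyCI} cannot fill the gap: those results convert $(X^*)_\mathcal U$-exposed points into strongly exposed ones, whereas the missing step is to manufacture an exposing functional inside $(X^*)_\mathcal U$ in the first place (your own observation that exposed plus point of continuity implies strongly exposed on a weakly compact set already performs that conversion, so this route is also redundant). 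In short: correct frame, correct identification of the obstacle, but the obstacle itself---the subspace version of Bourgain's density theorem---is left unproved, and it is the whole proof.
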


This is just a particular case of the following result:

\begin{lema}\label{lemma:dualpair} Let $K$ be weakly compact convex subset of a Banach space $X$ and $Z$ be a subspace of $X^*$ such that $(X,Z)$ is a dual pair. Then
\[ K=\cconv(\strexp_Z(K)).\]
\end{lema}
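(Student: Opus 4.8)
The plan is to prove the equivalent inclusion $K\subseteq\cconv(\strexp_Z(K))$ by a Hahn--Banach separation argument inside the dual pair $(X,Z)$, with the whole difficulty concentrated in the following sublemma: \emph{every nonempty slice $S=S(K,z^*,\alpha)$ of $K$ defined by an element $z^*\in Z$ contains a point of $\strexp_Z(K)$.} Granting this, set $D:=\cconv(\strexp_Z(K))\subseteq K$ and suppose, towards a contradiction, that there is some $x_0\in K\setminus D$. As $D$ is a norm-closed convex subset of the weakly compact set $K$, it is weakly compact, hence $\sigma(X,Z)$-compact, because $Z\subseteq X^*$ makes $\sigma(X,Z)$ coarser than the weak topology; moreover $\sigma(X,Z)$ is Hausdorff since $Z$ separates the points of $X$. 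The dual of $(X,\sigma(X,Z))$ is exactly $Z$, so Hahn--Banach yields $z^*\in Z$ with $\sup_D z^*<z^*(x_0)\le\sup_K z^*$. Writing $s:=\sup_D z^*$, the set $S:=\{y\in K:\ z^*(y)>s\}$ is a nonempty $Z$-slice containing $x_0$ and disjoint from $D$. The sublemma then produces a point $q\in\strexp_Z(K)\cap S\subseteq D\cap S=\emptyset$, a contradiction; hence $D=K$.

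It remains to prove the sublemma. First I would record that $K$, being weakly compact and convex, has the Radon--Nikod\'ym property, so it is dentable: every slice of $K$ (in particular $S$) admits further slices of arbitrarily small diameter, a priori defined by functionals of $X^*$. Feeding each such small slice into Lemma~\ref{lemma:dentingsubnormante} replaces it by a slice defined by an element of $Z$ of no larger diameter, still containing a prescribed point. Iterating this, I can construct inside $S$ a decreasing sequence of $Z$-slices $S_n=S(K,w_n^*,\alpha_n)$ with $w_n^*\in S_{X^*}\cap Z$, $\overline{S_{n+1}}\subseteq S_n\subseteq S$ and $\diam(S_n)<2^{-n}$; by weak compactness and completeness, $\bigcap_n\overline{S_n}$ reduces to a single point $p\in S$.

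The core step, and the main obstacle, is to upgrade this nested family into a \emph{single} functional of $Z$ that strongly exposes a point of $S$. Following the classical construction behind $K=\cconv(\strexp(K))$ for sets with the Radon--Nikod\'ym property (cf.\ Theorem~8.13 in \cite{BST}), I would set $w^*:=\sum_{n\ge1}2^{-n}w_n^*$, which converges absolutely in $X^*$, and check that $w^*$ strongly exposes $p$: any $y\in K$ at distance at least $\delta$ from $p$ escapes some $S_N$ with $2^{-N}<\delta$, so $w_N^*(y)$ falls below its supremum on $K$ by at least $\alpha_N$, and this definite deficit survives in the weighted sum, giving $w^*(y)\le w^*(p)-2^{-N}\alpha_N$. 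The delicate point is that $w^*$ must lie in $Z$: when $Z$ is norm-closed, as in the application $Z=(X^*)_\mathcal U$, this is automatic from absolute convergence; otherwise one approximates $w^*$ by $\tilde w\in Z$ and invokes that a sufficiently small norm-perturbation of a strongly exposing functional still strongly exposes a point of $K$ lying in $S$, so that $\strexp_Z(K)\cap S\neq\emptyset$ in every case. Verifying this perturbation stability, together with the bookkeeping that keeps each slice simultaneously small and defined within $Z$ at every inductive stage, is where the technical work concentrates.
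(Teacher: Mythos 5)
Your outer argument coincides with the paper's: both separate a putative point of $K\setminus\cconv(\strexp_Z(K))$ using that weakly compact convex sets are $\sigma(X,Z)$-compact and that $(X,\sigma(X,Z))^*=Z$, and both reduce the whole lemma to producing $Z$-strongly exposed points inside $Z$-slices. Where the paper handles this inner step by invoking $Z$-dentability of all subsets of $K$ (via Lemma~\ref{lemma:dentingsubnormante}) together with ``a slight modification of Theorem~8 in \cite{Bourgain}'', you try to prove it from scratch, and this is where there is a genuine gap: the weighted sum $w^*=\sum_{n\ge 1}2^{-n}w_n^*$ does \emph{not} strongly expose $p$, and the claimed inequality $w^*(y)\le w^*(p)-2^{-N}\alpha_N$ does not follow from what you have. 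The point $p\in\bigcap_n \overline{S_n}$ is only known to satisfy $w_n^*(p)\ge \sup_K w_n^*-\alpha_n$, with no quantitative margin; it need not (even nearly) maximize any $w_n^*$. Hence at level $N$ you only get $w_N^*(p)-w_N^*(y)\ge 0$, and for $n<N$ a point $y\in S_n$ can beat $p$: $w_n^*(y)-w_n^*(p)$ may be as large as $\alpha_n$. Summing, all you obtain is $w^*(p)-w^*(y)\ge -\sum_{n<N}2^{-n}\alpha_n$, which is useless. Concretely, on the Euclidean disc, if $p$ sits near the edge of the first cap $S_1$ and the later $w_n^*$ point essentially at $p$, then $w^*\approx \tfrac12 w_1^*+\tfrac12 p$ attains its maximum on the disc at a boundary point strictly between $w_1^*$ and $p$; so $w^*$ does not even attain its supremum at $p$. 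This failure is precisely why the Phelps--Bourgain theorem is nontrivial: the correct construction (the one behind Theorem~8 of \cite{Bourgain}, and behind Theorem~8.13 of \cite{BST}, which you cite but misremember) builds a \emph{norm-convergent} sequence of functionals $f_{n+1}=f_n+h_n$, with each perturbation $\|h_n\|$ kept small relative to the width of the $n$-th slice so that the limit functional still cuts slices inside every $S_n$; it is not a weighted sum of the slice functionals.

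Your fallback for non-closed $Z$ --- that a sufficiently small norm-perturbation of a strongly exposing functional still strongly exposes a point of $K$ lying in $S$ --- is also unsubstantiated: the standard facts (Bourgain, Stegall) give that strongly exposing functionals form a dense $G_\delta$, which provides no stability under perturbation, and you yourself flag this step as unverified. So the heart of the proof, your sublemma that every nonempty $Z$-slice of $K$ meets $\strexp_Z(K)$, is not established (the sublemma itself is true, and would indeed finish the proof exactly as you say, but it requires the Bourgain-type iteration in the dual-pair setting --- which is exactly the content the paper delegates to its citation). One fair observation on your side: the completeness of $Z$ that worries you is a real issue for Bourgain's iteration as well, and the paper's ``slight modification'' implicitly relies on the fact that in the intended application $Z=(X^*)_{\mathcal U}$ is norm-closed in $(X_{\mathcal U})^*$.
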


\begin{proof}
First, let's show that every subset $C$ of $K$ is $Z$-dentable, that is, there are slices of $C$ given by elements of $Z$ with arbitrarily small diameter. If $\overline{\co}(C)$ is $Z$-dentable then $C$ is dentable too, so we can suppose that $C$ is closed and convex. Since $K$ is weakly compact, so is $C$. In particular, $C$ is dentable and then $Z$-dentable by Lemma~\ref{lemma:dentingsubnormante}. A slight modification of Theorem~8 in \cite{Bourgain} allows us to conclude that the subset of $Z$
that strongly exposes elements of $K$ is dense in $Z$. Now, suppose by contradiction that $K\neq\overline{\co}(\strexp_{Z}(K)$. Since $\overline{\co}(\strexp_{Z}(K))$ is weakly compact and then $\sigma(X,Z)$-compact, there exists $x^*\in Z$ such that $$\sup_{K} x^*>\sup_{\strexp_{Z}(K)}x^*.$$ By density, we deduce that there exists $y^*\in Z$ strongly exposing an element $x\in K$ such that $$y^*(x)=\sup_{K} y^*>\sup_{\strexp_{Z}(K)}y^*,$$ which is a contradiction.
\end{proof}

\begin{proof}[Proof of Theorem~\ref{strexp}] Note that $K_\mathcal U$ is weakly compact thanks to Proposition~\ref{fact:closed}. Now, apply Lemma~\ref{lemma:dualpair} taking $Z=(X_\mathcal U)^*\subset (X_\mathcal U)^*$.
\end{proof}

K.~Tu has recently proved that in \cite{Tu} that the closed convex hull of a relatively super weakly compact set is super weakly compact. More precisely, he obtained that
$$\overline{(\co(A))_{\mathcal U}}= \overline{\co}(A_{\mathcal U}).$$ for any relatively super weakly compact set $A$. Using this result, it is possible to localise the set of extreme points of a super weakly compact set:

\begin{prop}\label{extremepoint}
Let $K\subset X$ be a super weakly compact convex set and $\mathcal U$ be a CI ultrafilter on an infinite set $I$. Then 
\[\ext(K_\mathcal U)\subset\overline{(\strexp(K))_\mathcal U}^w \quad \text{and}\quad \dent(K_\mathcal U)\subset (\strexp K)_{\mathcal U}.\]
\end{prop}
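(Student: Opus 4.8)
The plan is to reduce both inclusions to the single identity $K_\mathcal U=\overline{\co}((\strexp K)_\mathcal U)$, from which the first follows by Milman's theorem and the second by the point-of-continuity property of denting points. The main obstacle is precisely this identity: it requires interchanging the closed convex hull with the ultrapower construction, which is exactly what Tu's theorem from \cite{Tu} provides, once it is combined with the CI-dependent closedness statement of Proposition~\ref{fact:closed}.

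To prove $K_\mathcal U=\overline{\co}((\strexp K)_\mathcal U)$, I would first observe that a super weakly compact convex set is weakly compact: $\J(K)\subset K_\mathcal U$ is relatively weakly compact, the relative weak topology of $\J(X)\subset X_\mathcal U$ agrees with the weak topology of $X$, so $K$ is relatively weakly compact and, being weakly closed, weakly compact. Theorem~8.13 in \cite{BST} then gives $K=\overline{\co}(\strexp K)$. Writing $A=\co(\strexp K)$ with $\overline A=K$, Proposition~\ref{fact:closed} yields $K_\mathcal U=(\overline A)_\mathcal U=A_\mathcal U=(\co(\strexp K))_\mathcal U$. Since $\strexp K\subset K$ is relatively super weakly compact, Tu's theorem gives $\overline{(\co(\strexp K))_\mathcal U}=\overline{\co}((\strexp K)_\mathcal U)$; as $K_\mathcal U$ is norm-closed by Proposition~\ref{fact:closed}, these combine to the desired identity.

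For the first inclusion, $K_\mathcal U$ is weakly compact (closed by Proposition~\ref{fact:closed}, relatively weakly compact by hypothesis). By Mazur's theorem the norm-closed convex hull coincides with the weakly closed one, so the identity of the previous paragraph exhibits $K_\mathcal U$ as the weakly closed convex hull of $(\strexp K)_\mathcal U$ in the locally convex space $(X_\mathcal U,w)$. Milman's theorem then gives $\ext(K_\mathcal U)\subset\overline{(\strexp K)_\mathcal U}^{\,w}$.

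For the second inclusion, let $(x_i)_\mathcal U\in\dent(K_\mathcal U)$. Being extreme, it lies in $\overline{(\strexp K)_\mathcal U}^{\,w}$ by the first inclusion. The crucial observation is that a denting point is a point of continuity of $K_\mathcal U$: each small-diameter slice containing it is a relatively weakly open neighbourhood of small diameter, so the weak and norm topologies coincide at $(x_i)_\mathcal U$ relative to $K_\mathcal U$. Hence any net in $(\strexp K)_\mathcal U\subset K_\mathcal U$ converging weakly to $(x_i)_\mathcal U$ must converge in norm, so $(x_i)_\mathcal U$ lies in the norm closure of $(\strexp K)_\mathcal U$, which equals $(\strexp K)_\mathcal U$ by Proposition~\ref{fact:closed}. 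This yields $(x_i)_\mathcal U\in(\strexp K)_\mathcal U$, as desired.
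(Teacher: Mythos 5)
Your proposal is correct and follows essentially the same route as the paper: both reduce everything to the identity $K_\mathcal U=\overline{\co}((\strexp K)_\mathcal U)$, obtained from $K=\overline{\co}(\strexp K)$ (weak compactness), Proposition~\ref{fact:closed} and Tu's theorem, and then read off the two inclusions. The only (cosmetic) difference is in the last step: you invoke Milman's theorem and the point-of-continuity property of denting points, while the paper argues directly that every slice of $K_\mathcal U$ meets $(\strexp K)_\mathcal U$ and that slices form weak (resp.\ norm) neighbourhood bases at extreme (resp.\ denting) points --- which is precisely the argument behind Milman's theorem.
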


\begin{proof}
Since $K$ is weakly compact, we have that $K=\cconv{(\strexp(K))}$. Thus, using K.~Tu result, it follows that $$K_\mathcal U=(\overline{\co}(\strexp(K)))_\mathcal U=\overline{(\co(\strexp(K)))_{\mathcal U}}=\overline{\co}((\strexp(K))_\mathcal U).$$ 
so any slice of $K_{\mathcal U}$ has non-empty intersection with $(\strexp(K))_{\mathcal U}$. Since the slices are a neighbourhood basis for the extreme (resp. denting) points of $K_{\mathcal U}$ in the weak (resp. norm) topology, we have $\ext(K_\mathcal U)\subset\overline{(\strexp(K))_\mathcal U}^w$ and $\dent(K_\mathcal U)\subset \overline{(\strexp K)_{\mathcal U}}=(\strexp K)_{\mathcal U}$, where the last equality follows from Lemma~\ref{fact:closed}. 
\end{proof}

\begin{prop}\label{dent}
Let $K$ be a super weakly compact subset of a Banach space $X$, $\mathcal U$ be a CI ultrafilter on an infinite set $I$ and let $x\in K$. Then $\J(x)\in\dent(K_\mathcal U)$ if and only if $x\in\dent(K)$.
\end{prop}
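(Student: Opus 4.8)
The plan is to reduce the whole statement to Theorem~\ref{dent_j(x)}, which already identifies $x\in\dent(K)$ with $\J(x)\in\dent_{(X^*)_\mathcal U}(K_\mathcal U)$. Thus the only thing left is to bridge the gap between being denting with respect to the small dual $(X^*)_\mathcal U$ and being denting with respect to the whole dual $(X_\mathcal U)^*$; that is, to show that for the point $\J(x)$ and the set $K_\mathcal U$ these two notions coincide.

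The implication $x\in\dent(K)\Rightarrow\J(x)\in\dent(K_\mathcal U)$ is the easy one and needs no compactness. Indeed, if $x\in\dent(K)$ then Theorem~\ref{dent_j(x)} gives $\J(x)\in\dent_{(X^*)_\mathcal U}(K_\mathcal U)$, and since $(X^*)_\mathcal U$ is a subspace of $(X_\mathcal U)^*$, every slice of $K_\mathcal U$ cut out by a functional of $(X^*)_\mathcal U$ is also a slice cut out by a functional of $(X_\mathcal U)^*$. Hence $\dent_{(X^*)_\mathcal U}(K_\mathcal U)\subseteq\dent(K_\mathcal U)$, and we conclude that $\J(x)\in\dent(K_\mathcal U)$.

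For the converse $\J(x)\in\dent(K_\mathcal U)\Rightarrow x\in\dent(K)$, I would first record that $K_\mathcal U$ is weakly compact and convex: it is relatively weakly compact because $K$ is super weakly compact, and it is norm-closed (hence weakly closed, being convex) by Proposition~\ref{fact:closed}, since $\mathcal U$ is CI. Next I would use the fact, recalled in Subsection~\ref{subsecnotultra}, that $(X^*)_\mathcal U$ is a norming subspace of $(X_\mathcal U)^*$, so that $(X_\mathcal U,(X^*)_\mathcal U)$ is a dual pair. These are precisely the hypotheses of Lemma~\ref{lemma:dentingsubnormante} applied to the Banach space $X_\mathcal U$, the weakly compact convex set $K_\mathcal U$, and $Z=(X^*)_\mathcal U$: since $\J(x)$ is a denting point of $K_\mathcal U$, the lemma upgrades this to $\J(x)\in\dent_{(X^*)_\mathcal U}(K_\mathcal U)$. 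A final application of Theorem~\ref{dent_j(x)} then yields $x\in\dent(K)$.

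The main obstacle is exactly this converse direction, and it is localized in passing from $(X_\mathcal U)^*$-denting to $(X^*)_\mathcal U$-denting. In the absence of superreflexivity one has no good control over $(X_\mathcal U)^*$, so the reduction to the accessible subspace $(X^*)_\mathcal U$ is essential; it is made possible only by the weak compactness of $K_\mathcal U$, which is where both the super weak compactness of $K$ and the countable incompleteness of $\mathcal U$ (through Proposition~\ref{fact:closed}) enter in a crucial way.
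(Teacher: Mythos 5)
Your proposal is correct and takes essentially the same approach as the paper: the paper's proof also consists of applying Lemma~\ref{lemma:dentingsubnormante} with $Z=(X^*)_\mathcal U$ to identify $\dent(K_\mathcal U)$ with $\dent_{(X^*)_\mathcal U}(K_\mathcal U)$ at the point $\J(x)$, and then concluding via Theorem~\ref{dent_j(x)}. Your write-up simply makes explicit the details the paper leaves implicit, namely that $K_\mathcal U$ is weakly compact (relative weak compactness from super weak compactness of $K$, plus norm-closedness from Proposition~\ref{fact:closed} using that $\mathcal U$ is CI) and that $(X_\mathcal U,(X^*)_\mathcal U)$ is a dual pair because $(X^*)_\mathcal U$ is norming.
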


\begin{proof}
Since $K$ is super weakly compact, it follows that $\J(x)$ is $(X^*)_\mathcal U$-denting if and only if $\J(x)\in\dent(K_\mathcal U)$ (by Lemma~\ref{lemma:dentingsubnormante}). We conclude by Theorem~\ref{dent_j(x)}.
\end{proof}

Note that Theorem \ref{strexp} is a useful tool in the search of a characterisation of when $(x_i)_\mathcal U$ is a denting point in $K_\mathcal U$ if $K$ is super weakly compact. However, in order to get a complete characterisation in terms of a condition on the points $x_i$'s, we will consider a notion which is stronger than super weak compactness: the one of \textit{uniform convexity} (see Definition~\ref{def:uniconvex}). This geometric property on $K$ will allow us to characterise the denting points of uniformly convex subsets of a Banach space (see Theorem \ref{theo:dentinguconvex}). 

One can think that there is a big difference between super weakly compact sets and uniformly convex sets. However, thanks to a result of M. Raja and G. Lancien (see \cite[Proposition~4.3]{LancienRaja}), we see that from a topological point of view this is not the case. Indeed, given a symmetric super weakly compact subset $K$ and $\varepsilon>0$, there exists a uniformly convex set $C_\varepsilon$ so that $C_\varepsilon\subseteq K\subseteq (1+\varepsilon)C_\varepsilon$. This result should be compared with a classical theorem of Enflo (see for instance \cite[Theorem 9.14]{BST}) which says that, given a superreflexive Banach space $(X,\Vert \cdot\Vert)$ then, for every $\varepsilon>0$, there exists a renorming $\vert\cdot\vert_\varepsilon$ on $X$ so that $(X,\vert\cdot\vert_\varepsilon)$ is a uniformly convex Banach space and so that 
$$\frac{1}{1+\varepsilon}\vert x\vert_\varepsilon\leq \Vert x\Vert\leq (1+\varepsilon)\vert x\vert_\varepsilon\qquad \forall x\in X.$$

A localised version of Enflo's theorem has been established in \cite{dentable} where the author proved that if $K$ is a super-weakly compact convex subset of a Banach space $X$, then there exists a equivalent norm $|\cdot|$ on $X$ such that the restriction of $|\cdot|^2$ to $K$ is uniformly convex. It is worth noting that the spaces which are generated by a super-weakly compact set have been characterized in terms of strongly uniformly Gâteaux renorming (see \cite{RajaSWCG}).

Let us now consider the formal definition of uniformly convex set.

\begin{defi}\label{def:uniconvex}
A symmetric bounded closed convex set $C$ of a Banach space $X$ is said to be uniformly convex if for every $\varepsilon>0$ there exists $\delta>0$ such that $$\forall x,y\in C,\ \ \|x-y\|>\varepsilon\ \implies\ \frac{x+y}{2}\in(1-\delta)C.$$ In such case, we define the convexity modulus of $C$ by $$\delta_C(\varepsilon)=\inf\left\{1-\left|\frac{x+y}{2}\right|_C\ :\ x,y\in C,  \|x-y\|>\varepsilon\right\}$$ where $|\cdot|_{C}$ is the Minkowski functional of $C$. By convention, $\inf\emptyset=1$.
\end{defi}

Raja proved that a closed convex bounded subset is finitely dentable (see the introduction in \cite{dentable} for the definition)  if and only if it admits a uniformly convex function (see Theorem~2.2 in \cite{dentable}). Moreover, it is possible to construct directly a uniformly convex function on any super weakly convex set using the fact that such a set does not admit dyadic separated trees of arbitrary height (see Theorem~4.4 in \cite{tree}). It follows that a closed convex set is super weakly compact if and only if it is finitely dentable. Since a uniformly convex set is finitely dentable, the authors of \cite{LancienRaja} obtained that any uniformly convex set is super weakly compact \cite[Proposition~4.2]{LancienRaja}. 

We will also consider the following weakening of uniform convexity.

\begin{defi}
A symmetric bounded closed convex set $C$ of a Banach space $X$ is said to be strictly convex if for all $x,y\in C$ such that $x\neq y$ and $|x|_C=|y|_C=1$, one has that $\left|\frac{x+y}{2}\right|_C<1$.
\end{defi}

In general, every extreme point $x$ of a symmetric bounded closed convex set $C\neq\{0\}$ satisfies $|x|_C=1$. In the case $C$ is strictly convex, one can easily check that indeed $\ext(C)=|\cdot|_C^{-1}(\{1\})$. We will use this fact in the sequel. 

The following result generalizes the fact that a Banach space is uniformly convex if and only if its ultrapower is strictly (or uniformly) convex.

\begin{prop}\label{unifconvex}
Let $K$ be a symmetric bounded convex subset of a Banach space $X$. Let $\mathcal U$ be a CI ultrafilter on an infinite set $I$. The following assertions are equivalent:
\begin{enumerate}
    \item[(i)] $K$ is uniformly convex;
    \item[(ii)] $K_\mathcal U$ is uniformly convex.
\end{enumerate}
In that case, we have that $\delta_K=\delta_{K_\mathcal U}$. Moreover, if $0$ is an interior point of $K$, then the previous statements are equivalent to:
 \begin{enumerate}
    \item[(iii)] $K_\mathcal U$ is strictly convex. 
\end{enumerate}
\end{prop}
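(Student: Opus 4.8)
The plan is to reduce the whole statement to a comparison of the convexity moduli $\delta_K$ and $\delta_{K_\mathcal U}$, the key tool being the behaviour of the Minkowski functional under the ultrapower construction. I will assume $K$ is closed, which is harmless: by Proposition~\ref{fact:closed} one has $K_\mathcal U=(\overline K)_\mathcal U$, and replacing $K$ by $\overline K$ changes neither side. Note first that $tK_\mathcal U=(tK)_\mathcal U$ for every $t>0$, and that $K_\mathcal U$ is a symmetric, bounded, closed and convex set by Proposition~\ref{fact:closed}, so that $\delta_{K_\mathcal U}$ is well defined.

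The technical heart is two facts about $|\cdot|_{K_\mathcal U}$. First, for any bounded family $(w_i)_{i\in I}$ with $w_i\in K$ one has the inequality $|(w_i)_\mathcal U|_{K_\mathcal U}\le \lim_\mathcal U|w_i|_K$: given $s>\lim_\mathcal U|w_i|_K$, the set $\{i: |w_i|_K<s\}$ lies in $\mathcal U$, so truncating $w_i$ to $0$ off this set produces a representative lying pointwise in $sK$, whence $(w_i)_\mathcal U\in (sK)_\mathcal U=sK_\mathcal U$. Second, for a constant sequence one has the exact identity $|\J(z)|_{K_\mathcal U}=|z|_K$ for every $z$: the inequality $\le$ is the previous bound, while for $\ge$ one uses that if $\J(z)\in s'K_\mathcal U=(s'K)_\mathcal U$, then $z$ is a norm-limit of elements of the closed set $s'K$, so $z\in s'K$. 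With these in hand I would prove $\delta_K=\delta_{K_\mathcal U}$ as follows. The inequality $\delta_{K_\mathcal U}\le\delta_K$ is obtained by restricting the infimum defining $\delta_{K_\mathcal U}$ to pairs $(\J(x),\J(y))$ with $x,y\in K$ and $\|x-y\|>\varepsilon$, using $|\tfrac{\J(x)+\J(y)}2|_{K_\mathcal U}=|\tfrac{x+y}2|_K$. For $\delta_{K_\mathcal U}\ge\delta_K$, take $(x_i)_\mathcal U,(y_i)_\mathcal U\in K_\mathcal U$ with $\lim_\mathcal U\|x_i-y_i\|>\varepsilon$; then $\|x_i-y_i\|>\varepsilon$ on a set in $\mathcal U$, on which $|\tfrac{x_i+y_i}2|_K\le 1-\delta_K(\varepsilon)$, and the upper bound yields $|\tfrac{(x_i)_\mathcal U+(y_i)_\mathcal U}2|_{K_\mathcal U}\le\lim_\mathcal U|\tfrac{x_i+y_i}2|_K\le 1-\delta_K(\varepsilon)$. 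Since $K$ (resp.\ $K_\mathcal U$) is uniformly convex exactly when $\delta_K(\varepsilon)>0$ (resp.\ $\delta_{K_\mathcal U}(\varepsilon)>0$) for all $\varepsilon>0$, this proves (i)$\Leftrightarrow$(ii) together with $\delta_K=\delta_{K_\mathcal U}$.

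For the \emph{moreover} part, (ii)$\Rightarrow$(iii) is immediate and needs no interior hypothesis: if $u\ne v$ lie on the unit sphere of $|\cdot|_{K_\mathcal U}$, then $\|u-v\|=c>0$, and uniform convexity gives $|\tfrac{u+v}2|_{K_\mathcal U}\le 1-\delta_{K_\mathcal U}(c/2)<1$. The substantial implication is (iii)$\Rightarrow$(i). Here I would use that when $0$ is interior to the bounded set $K$ the Minkowski functional $|\cdot|_K$ is an equivalent norm on $X$, hence norm-continuous; this upgrades the first bound above to the full identity $|(w_i)_\mathcal U|_{K_\mathcal U}=\lim_\mathcal U|w_i|_K$. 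Arguing by contraposition, suppose $K$ is not uniformly convex, so $\delta_K(\varepsilon_0)=0$ for some $\varepsilon_0>0$; pick $x_n,y_n\in K$ with $\|x_n-y_n\|>\varepsilon_0$ and $|\tfrac{x_n+y_n}2|_K\to 1$. Using a decreasing sequence $(I_n)\subset\mathcal U$ with empty intersection (CI), I glue these into families by setting $x_i=x_n$, $y_i=y_n$ for $i\in I_n\setminus I_{n+1}$, exactly as in the proof of Theorem~\ref{th:jstrext}; then $\lim_\mathcal U|\tfrac{x_i+y_i}2|_K=1$ and $\lim_\mathcal U\|x_i-y_i\|\ge\varepsilon_0$. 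Setting $u=(x_i)_\mathcal U$ and $v=(y_i)_\mathcal U$, the full identity gives $|\tfrac{u+v}2|_{K_\mathcal U}=1$, and from $1=|\tfrac{u+v}2|_{K_\mathcal U}\le\tfrac12(|u|_{K_\mathcal U}+|v|_{K_\mathcal U})\le 1$ we obtain $|u|_{K_\mathcal U}=|v|_{K_\mathcal U}=1$, while $u\ne v$ because $\|u-v\|\ge\varepsilon_0$. Thus $K_\mathcal U$ is not strictly convex, proving (iii)$\Rightarrow$(i).

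The main obstacle is precisely the failure, for $K$ with empty interior, of the naive identity $|(w_i)_\mathcal U|_{K_\mathcal U}=\lim_\mathcal U|w_i|_K$: the Minkowski functional is only lower semicontinuous, so a family $w_i$ with $|w_i|_K$ bounded away from $0$ may have $\|w_i\|\to 0$, forcing $(w_i)_\mathcal U=0$. The resolution is the observation that each of (i)$\Leftrightarrow$(ii) and the modulus equality only ever needs the one inequality that does survive (the upper bound for $\delta_{K_\mathcal U}\ge\delta_K$, the constant-sequence identity for $\delta_{K_\mathcal U}\le\delta_K$), and that the interior hypothesis in (iii) is exactly what restores the missing lower bound.
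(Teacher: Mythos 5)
Your proposal is correct and takes essentially the same route as the paper: your two Minkowski-functional facts are precisely the paper's Lemma~\ref{minkovski} (the inequality $|(w_i)_\mathcal U|_{K_\mathcal U}\le\lim_\mathcal U|w_i|_K$, plus the constant-sequence identity whose nontrivial direction is the same pull-back argument the paper uses for (ii)$\Rightarrow$(i)), and your CI-gluing contraposition for (iii)$\Rightarrow$(i) coincides with the paper's proof, including the use of the full identity $|(w_i)_\mathcal U|_{K_\mathcal U}=\lim_\mathcal U|w_i|_K$ when $0$ is interior. The only cosmetic difference is that you package (i)$\Leftrightarrow$(ii) as the single modulus identity $\delta_K=\delta_{K_\mathcal U}$ derived from those facts, whereas the paper proves the two modulus inequalities directly via membership in $(1-\eta)K$ and $(1-\eta)K_\mathcal U$; also, the closedness of $K$ you assume without loss of generality is implicit in the paper as well, being built into Definition~\ref{def:uniconvex} and used in its own proof.
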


For the proof we will need the following result.

\begin{lema}\label{minkovski}
Let $C$ be a symmetric bounded convex subset of a Banach space $X$, $\mathcal U$ be a CI ultrafilter on an infinite set $I$ and $(x_i)_\mathcal U\in C_{\mathcal U}$. Then $|(x_i)_\mathcal U|_{C_\mathcal U}\leq\lim_\mathcal U|x_i|_C$. Moreover, if $0$ is an interior point of $C$, the reverse equality also holds. 
\end{lema}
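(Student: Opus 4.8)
The plan is to prove the two estimates separately: the inequality $|(x_i)_\mathcal U|_{C_\mathcal U}\le\lim_\mathcal U|x_i|_C$ holds for any free ultrafilter, while the reverse inequality is where the assumption that $0$ is interior really enters. First I would record the standing facts: since $C$ is symmetric and convex it contains $0$, so its Minkowski functional $|\cdot|_C$ is positively homogeneous and subadditive on $\operatorname{span}(C)$; and $|x_i|_C\le 1$ for every $i$ because $x_i\in C$, so that $\lambda:=\lim_\mathcal U|x_i|_C$ exists in $[0,1]$.

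For the first inequality I would fix $t>\lambda$ and set $J:=\{i\in I: |x_i|_C<t\}$, which lies in $\mathcal U$ because the ultralimit of $(|x_i|_C)_i$ is $\lambda<t$. For $i\in J$ the relation $|x_i|_C<t$ gives $x_i\in tC$, so $y_i:=x_i/t\in C$; put $y_i:=0$ otherwise. Then $(y_i)_\mathcal U\in C_\mathcal U$, and since $x_i=ty_i$ on $J\in\mathcal U$ we get $(x_i)_\mathcal U=t(y_i)_\mathcal U\in tC_\mathcal U$, whence $|(x_i)_\mathcal U|_{C_\mathcal U}\le t$. Letting $t\downarrow\lambda$ closes this half.

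For the reverse inequality I would use that $0$ interior yields some $r>0$ with $rB_X\subset C$, and hence the domination $|z|_C\le\frac1r\|z\|$ for every $z\in X$. Writing $\mu:=|(x_i)_\mathcal U|_{C_\mathcal U}$ and fixing $t>\mu$, the definition of the Minkowski functional of the convex set $C_\mathcal U$ supplies $(c_i)_\mathcal U\in C_\mathcal U$ with $(x_i)_\mathcal U=t(c_i)_\mathcal U$, that is $\lim_\mathcal U\|x_i-tc_i\|=0$. Using subadditivity and homogeneity of $|\cdot|_C$ together with $|c_i|_C\le1$,
$$|x_i|_C\le t|c_i|_C+|x_i-tc_i|_C\le t+\tfrac1r\|x_i-tc_i\|,$$
and applying $\lim_\mathcal U$ gives $\lambda\le t$. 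Since $t>\mu$ was arbitrary, $\lambda\le\mu$, which combined with the first part yields the desired equality.

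The step I expect to be the crux is precisely the reverse inequality, and within it the transition from closeness in the ambient ultrapower norm to closeness in the Minkowski functional: the equality $(x_i)_\mathcal U=t(c_i)_\mathcal U$ only encodes $\lim_\mathcal U\|x_i-tc_i\|=0$ in the original norm, and to push this through $|\cdot|_C$ one genuinely needs the estimate $|z|_C\le\frac1r\|z\|$, which is exactly what interiority of $0$ provides. Without that hypothesis the functional $|\cdot|_C$ need not be controlled by the norm, and one is left only with the inequality $\le$ from the first part.
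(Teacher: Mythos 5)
Your proof is correct and follows essentially the same route as the paper's: the first inequality is obtained by transferring the membership $x_i\in tC$, valid on a set of the ultrafilter, to $(x_i)_\mathcal U\in tC_\mathcal U$, and the reverse inequality rests on writing $(x_i)_\mathcal U$ as a multiple of an element of $C_\mathcal U$ and on the domination $|\cdot|_C\leq\frac{1}{r}\Vert\cdot\Vert$ supplied by the interiority of $0$. The one genuine difference is technical but worth noting: by working throughout with strict parameters $t>\lambda$ and $t>\mu$ and letting $t$ decrease, you never need $C_\mathcal U$ to be closed nor the Minkowski infimum to be attained, whereas the paper uses the closedness of $C_\mathcal U$ (Proposition~\ref{fact:closed}, which is exactly where the CI hypothesis enters) in the first half and the attainment $(x_i)_\mathcal U\in\lambda C_\mathcal U$ with $\lambda=|(x_i)_\mathcal U|_{C_\mathcal U}$ in the second. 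Consequently your argument establishes the lemma for an arbitrary free ultrafilter on $I$, not just a CI one. A further small economy: where the paper invokes the full equivalence of $\Vert\cdot\Vert$ and $|\cdot|_C$ on $\operatorname{span}(C)$ (which also uses boundedness of $C$), you correctly isolate the only direction actually needed, namely $|\cdot|_C\leq\frac{1}{r}\Vert\cdot\Vert$.
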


\begin{proof}
Define $l:=\lim_\mathcal U|x_i|_C$. Let $\varepsilon>0$. For all $i\in I$, we have that $x_i\in(|x_i|_C+\varepsilon)C$. Define $J:=\{i\in I: ||x_i|_C-l|<\varepsilon\}\in\mathcal U$. For all $i\in J$, it follows that $x_i\in(l+2\varepsilon)C$. Then $(x_i)_\mathcal U\in (l+2\varepsilon)C_\mathcal U$ for all $\varepsilon>0$. Since $C_\mathcal U$ is closed, we deduce that $(x_i)_\mathcal U\in lC_\mathcal U$. We conclude that $|(x_i)_\mathcal U|_{C_\mathcal U}\leq l$.

Now, assume that $0$ is an interior point of $C$. Let $\lambda:=\vert (x_i)_\mathcal U\vert_{C_\mathcal U}$ and notice that $(x_i)_\mathcal U\in \lambda C_\mathcal U$, so there exists $(y_i)_{i\in I}\in C^I$ so that $(x_i)_\mathcal U=(\lambda y_i)_\mathcal U$. Note that
$$\lim_\mathcal U \Vert x_i-\lambda y_i\Vert=0\Leftrightarrow \lim_\mathcal U \vert x_i-\lambda y_i\vert_C=0$$
since $\Vert\cdot\Vert$ and $\vert \cdot\vert_C$ are equivalent norms on $\text{span}(C)$. This implies that
$$\lim_\mathcal U \vert x_i \vert_C=\lim_\mathcal U\vert\lambda y_i\vert_C=\lambda\lim_\mathcal U\vert y_i\vert_C\leq \lambda$$
where the last inequality holds since $y_i\in C$ holds for every $i$. This proves the equality in such case.
\end{proof}

\begin{proof}[Proof of Proposition \ref{unifconvex}]
$(i)\implies (ii)$ Suppose that $K$ is uniformly convex. Note that $K_{\mathcal U}$ is closed since $\mathcal U$ is CI. Let $\varepsilon>0$. Take $(x_i)_\mathcal U, (y_i)_\mathcal U\in K_\mathcal U$ such that $\|(x_i)_\mathcal U-(y_i)_\mathcal U\|>\varepsilon$. Then, we can suppose (changing some coordinates if necessary) that $\|x_i-y_i\|>\varepsilon$ for all $i\in I$. Let $\eta\in(0,\delta_K(\varepsilon))$ arbitrary. It follows that $\frac{x_i+y_i}{2}\in(1-\eta)K$ for all $i\in I$ and then $$\frac{(x_i)_\mathcal U+(y_i)_\mathcal U}{2}=\left(\frac{x_i+y_i}{2}\right)_\mathcal U\in (1-\eta)K_\mathcal U.$$ Since $\eta$ was arbitrary, we conclude that $0<\delta_{K}(\varepsilon)\leq \delta_{K_\mathcal U}(\varepsilon)$, i.e. $K_\mathcal U$ is uniformly convex. 

$(ii)\implies (i)$ Suppose that $K_\mathcal U$ is uniformly convex. Let $\varepsilon>0$. Let $x,y\in K$ such that $\|x-y\|>\varepsilon$. Let $\eta\in(0,\delta_{K_\mathcal U}(\varepsilon))$ arbitrary. It follows that $\|\J(x)-\J(y)\|>\varepsilon$ and then $j\left(\frac{x+y}{2}\right)=\frac{\J(x)+\J(y)}{2}\in(1-\eta)K_\mathcal U$. Let $(z_i)_{i\in I}\in K^I$ such that $j\left(\frac{x+y}{2}\right)=(1-\eta)(z_i)_\mathcal U$. Since $\lim_\mathcal U\left\|\frac{x+y}{2}-(1-\eta)z_i\right\|=0$, it follows that $\frac{x+y}{2}\in\overline{(1-\eta)K}=(1-\eta)K$. Since $\eta$ was arbitrary, we conclude that $0<\delta_{K_\mathcal U}(\varepsilon)\leq\delta_K(\varepsilon)$, i.e. $K$ is uniformly convex.

Now suppose that $0$ is an interior point of $K$ and that $\mathcal U$ is a CI ultrafilter. $(ii)\implies (iii)$ is obvious. We will show the implication $(iii)\implies (i)$. Let suppose that $K$ is not uniformly convex. Then there exists $\varepsilon>0$ such that for all $n\in\mathbb N$, there exist $x_n,y_n\in K$ with $\|x_n-y_n\|>\varepsilon$ and $\left|\frac{x_n+y_n}{2}\right|_K\to 1$.
Let $(I_n)_n\subset \mathcal U$ be a sequence of distinct sets such that $\bigcap_nI_n=\emptyset$, $I_0=I$ and $I_{n+1}\subset I_n$ for all $n\in\mathbb N$. Define $x'_i=x_n$ if $i\in I_n\setminus I_{n+1}$ for some $n\in\mathbb N$. Define $y'_i$ in the same way. It is clear that $\|(x'_i)_\mathcal U-(y'_i)_\mathcal U\|\geq\varepsilon$. Moreover, it is easy to show that $\lim_\mathcal U\left|\frac{x'_i+y'_i}{2}\right|_K= 1$. The previous lemma implies that $$\left|\frac{(x'_i)_\mathcal U+(y'_i)_\mathcal U}{2}\right|_{K_\mathcal U}=1.$$ By triangle inequality, we also have that $|(x'_i)_\mathcal U|_{K_\mathcal U}=|(y'_i)_\mathcal U|_{K_\mathcal U}=1$. So $K_\mathcal U$ cannot be strictly convex.
\end{proof}

\begin{rema}
Proposition~\ref{unifconvex} reproves the very well-known result that a Banach space $X$ is uniformly convex if, and only if, $X_{\mathcal U}$ is strictly convex, where $\mathcal U$ is a CI ultrafilter. 
\end{rema}



In the sequel we aim to give a characterisation of the extreme points of a uniformly convex set. In order to do so, we need a preliminary result.  

\begin{lema}\label{lemma:limitext}
Let $C$ be symmetric bounded convex subset of a Banach space $X$ and $\mathcal U$ be a CI ultrafilter on an infinite set $I$. If $(x_i)_\mathcal U\in\ext(C_\mathcal U)$, then $\lim_\mathcal U |x_i|_C=1$.
\end{lema}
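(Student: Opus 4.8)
The plan is to sandwich $\lim_{\mathcal U}|x_i|_C$ between $1$ and $1$ by comparing it with the Minkowski functional of the whole ultrapower set $C_{\mathcal U}$. We may assume $C\neq\{0\}$, since otherwise the statement is of no interest (and, as in the general fact recalled above, that hypothesis is tacitly in force whenever Minkowski functionals of extreme points are discussed).

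First I would record that $C_{\mathcal U}$ is itself a nonzero symmetric bounded closed convex set: it is convex and bounded because $C$ is, it is symmetric because $C=-C$ forces $C_{\mathcal U}=-C_{\mathcal U}$, and it is closed because $\mathcal U$ is CI (Proposition~\ref{fact:closed}). Applying the general fact recalled above — that every extreme point of a nonzero symmetric bounded closed convex set has Minkowski functional equal to $1$ — to the extreme point $(x_i)_{\mathcal U}$ of $C_{\mathcal U}$ yields
\[ |(x_i)_{\mathcal U}|_{C_{\mathcal U}}=1. \]

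Next I would invoke Lemma~\ref{minkovski}, whose (unconditional) inequality gives $|(x_i)_{\mathcal U}|_{C_{\mathcal U}}\leq \lim_{\mathcal U}|x_i|_C$; note that only this inequality is used, so no interior-point hypothesis on $C$ is needed. On the other hand, since $x_i\in C$ we have $|x_i|_C\leq 1$ for every $i$, and the limit $\lim_{\mathcal U}|x_i|_C$ exists because $(|x_i|_C)_{i\in I}$ is bounded; hence $\lim_{\mathcal U}|x_i|_C\leq 1$. Combining the two estimates,
\[ 1=|(x_i)_{\mathcal U}|_{C_{\mathcal U}}\leq \lim_{\mathcal U}|x_i|_C\leq 1, \]
so $\lim_{\mathcal U}|x_i|_C=1$, as claimed.

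Since the argument is short, the only thing to be careful about is that each ingredient is genuinely available: the closedness of $C_{\mathcal U}$ (and hence the applicability of the general fact about symmetric closed convex bodies) rests on $\mathcal U$ being CI through Proposition~\ref{fact:closed}, and it is essential that Lemma~\ref{minkovski} supplies precisely the inequality $|(x_i)_{\mathcal U}|_{C_{\mathcal U}}\leq\lim_{\mathcal U}|x_i|_C$ rather than the reverse one — the reverse estimate would require $0$ to be an interior point of $C$, which we do not wish to assume here.
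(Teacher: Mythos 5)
Your proof is correct and follows essentially the same route as the paper's: both establish $|(x_i)_{\mathcal U}|_{C_{\mathcal U}}=1$ from extremality (via the fact about symmetric bounded closed convex sets, with closedness of $C_{\mathcal U}$ coming from Proposition~\ref{fact:closed}), combine it with the unconditional inequality of Lemma~\ref{minkovski}, and use the trivial bound $\lim_{\mathcal U}|x_i|_C\leq 1$. Your version simply makes explicit the details (the $C\neq\{0\}$ caveat, the CI hypothesis ensuring closedness) that the paper leaves implicit.
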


\begin{proof}
Since $x_i\in C$ for all $i\in I$, we have that $\lim_\mathcal U |x_i|_C\leq 1$. Moreover we have that $(x_i)_\mathcal U\in\ext(C_\mathcal U)$ so $|(x_i)_\mathcal U|_{C_\mathcal U}=1$. We conclude by Lemma~\ref{minkovski}.
\end{proof}

\begin{thm}\label{ext_ultrapower_unifconv}
Let $K$ be a uniformly convex subset of a Banach space $X$ and $\mathcal U$ be a CI ultrafilter on an infinite set $I$. Let $(x_i)_\mathcal U\in K_\mathcal U$. The following assertions are equivalent: 
\begin{enumerate}
    \item[(i)] $(x_i)_\mathcal U\in\ext(K_\mathcal U)$;
    \item[(ii)] for any $(y_i)_{i\in I}\in K^I$ such that $(y_i)_{\mathcal U}=(x_i)_{\mathcal U}$, it follows $\lim_\mathcal U |y_i|_K=1$.
\end{enumerate}
If $0$ is an interior point of $C$, then they are also equivalent to: 
\begin{enumerate}
    \item [(iii)] $\lim_{\mathcal U} |x_i|_K=1$.
\end{enumerate} 
\end{thm}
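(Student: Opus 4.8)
The plan is to reduce the whole statement to the strict convexity of $K_\mathcal U$ together with the computation of the Minkowski functional carried out in Lemma~\ref{minkovski}. Since $K$ is uniformly convex and $\mathcal U$ is CI, Proposition~\ref{unifconvex} gives that $K_\mathcal U$ is uniformly convex, hence strictly convex; therefore, by the fact recorded above that for a strictly convex symmetric bounded closed convex set $C\neq\{0\}$ one has $\ext(C)=|\cdot|_C^{-1}(\{1\})$, we may assume $K\neq\{0\}$ and conclude that $(x_i)_\mathcal U\in\ext(K_\mathcal U)$ if and only if $|(x_i)_\mathcal U|_{K_\mathcal U}=1$. As $(x_i)_\mathcal U\in K_\mathcal U$ always forces $|(x_i)_\mathcal U|_{K_\mathcal U}\leq 1$, the task in each case is to decide whether this Minkowski functional is exactly $1$.

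For (i)$\Rightarrow$(ii) I would argue directly: if $(y_i)_{i\in I}\in K^I$ satisfies $(y_i)_\mathcal U=(x_i)_\mathcal U$, then $(y_i)_\mathcal U$ \emph{is} the same extreme point, so Lemma~\ref{lemma:limitext} applied to this representation yields $\lim_\mathcal U|y_i|_K=1$. The point to notice is that Lemma~\ref{lemma:limitext} is representation-independent: its proof uses only that the element is extreme and that each coordinate lies in $K$, so it applies to any admissible representation of the point.

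The implication (ii)$\Rightarrow$(i) is where the real content lies. By strict convexity it suffices to show $|(x_i)_\mathcal U|_{K_\mathcal U}=1$. Suppose not, so that $\lambda:=|(x_i)_\mathcal U|_{K_\mathcal U}<1$. Then $(x_i)_\mathcal U\in\lambda K_\mathcal U$, so I can pick $(z_i)_{i\in I}\in K^I$ with $(x_i)_\mathcal U=(\lambda z_i)_\mathcal U$. Setting $y_i:=\lambda z_i$, symmetry and convexity of $K$ (in particular $0\in K$) give $y_i\in K$ and $|y_i|_K=\lambda|z_i|_K\leq\lambda$; hence $\lim_\mathcal U|y_i|_K\leq\lambda<1$, while $(y_i)_\mathcal U=(x_i)_\mathcal U$, contradicting (ii). This scaling-down trick, producing a representation of the same point whose coordinates sit strictly inside $K$, is the crux of the argument and the step I expect to require the most care, since one must check that the chosen $y_i$ genuinely lie in $K$.

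Finally, when $0$ is an interior point of $K$, the equality case of Lemma~\ref{minkovski} gives $|(x_i)_\mathcal U|_{K_\mathcal U}=\lim_\mathcal U|x_i|_K$ for the given representation. Combined with the strict-convexity characterisation of extreme points, this reads: $(x_i)_\mathcal U\in\ext(K_\mathcal U)$ iff $|(x_i)_\mathcal U|_{K_\mathcal U}=1$ iff $\lim_\mathcal U|x_i|_K=1$, which is precisely (iii). Hence (i)$\Leftrightarrow$(iii), and the chain of equivalences is complete.
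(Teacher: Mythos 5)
Your proof is correct, and its skeleton matches the paper's for (i)$\Rightarrow$(ii) (Lemma~\ref{lemma:limitext} applied to an arbitrary representation --- your observation that the lemma is representation-independent is exactly how the paper uses it) and for the equivalence with (iii) (equality case of Lemma~\ref{minkovski} plus the strict-convexity characterisation of extreme points). The genuine difference is in (ii)$\Rightarrow$(i). The paper works with the modulus of uniform convexity of $K_\mathcal U$: if $(x_i)_\mathcal U$ is not extreme, a midpoint decomposition $(x_i)_\mathcal U=\frac{(y_i)_\mathcal U+(z_i)_\mathcal U}{2}$ with $\|(y_i)_\mathcal U-(z_i)_\mathcal U\|>\varepsilon$ forces $(x_i)_\mathcal U\in(1-\delta_{K_\mathcal U}(\varepsilon))K_\mathcal U$, and one reads off a representation whose coordinates have gauge at most $1-\delta_{K_\mathcal U}(\varepsilon)$, violating (ii). You instead use only strict convexity of $K_\mathcal U$, through the fact $\ext(K_\mathcal U)=|\cdot|_{K_\mathcal U}^{-1}(\{1\})$: non-extremality gives $\lambda:=|(x_i)_\mathcal U|_{K_\mathcal U}<1$, and scaling a representation of $(x_i)_\mathcal U\in\lambda K_\mathcal U$ contradicts (ii). Your route isolates the weaker ingredient actually needed (strict convexity rather than the quantitative modulus) and makes the whole theorem transparently a statement about the Minkowski gauge; the paper's route gets membership in $(1-\delta)K_\mathcal U$ directly from the definition of uniform convexity, whereas yours needs the step from $|(x_i)_\mathcal U|_{K_\mathcal U}=\lambda$ to $(x_i)_\mathcal U\in\lambda K_\mathcal U$, which silently uses closedness of $K_\mathcal U$ (Proposition~\ref{fact:closed}, valid since $\mathcal U$ is CI and also implicit in Proposition~\ref{unifconvex}); if you want to avoid invoking attainment of the infimum, run the same scaling argument with $tK_\mathcal U$ for any $t\in(\lambda,1)$. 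Since both routes pass through Proposition~\ref{unifconvex}, neither weakens the hypotheses; the difference is organisational, and yours arguably gives the cleaner reduction.
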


\begin{proof}
Suppose that $(x_i)_\mathcal U\in\ext(K_\mathcal U)$. Let $(y_i)_{i\in I}\in K^I$ such that $(y_i)_{\mathcal U}=(x_i)_{\mathcal U}$. Obviously, we have that $(y_i)_\mathcal U\in\ext(K_\mathcal U)$ and we conclude by the previous lemma.

Now suppose that $(x_i)_\mathcal U\notin\ext(K_\mathcal U)$. Then there exist $(y_i)_\mathcal U,(z_i)_\mathcal U$ such that $(x_i)_\mathcal U=\frac{(y_i)_\mathcal U+(z_i)_\mathcal U}{2}$ and $\|(y_i)_\mathcal U-(z_i)_\mathcal U\|>\varepsilon$ for some $\varepsilon>0$. Let $\delta$ associated to $\varepsilon$ given by the uniform convexity of $K_\mathcal U$. It follows that $(x_i)_\mathcal U=\frac{(y_i)_\mathcal U+(z_i)_\mathcal U}{2}\in (1-\delta)K_\mathcal U$. So there exists $(y'_i)_{i\in I}\in (1-\delta)K^I\subset K^I$ such that $(x_i)_\mathcal U=(y'_i)_\mathcal U$. Since $|y'_i|_K\leq 1-\delta$ for all $i\in I$, it follows that  $\lim_\mathcal U |y'_i|_K<1$.

Finally, assume that $0$ is an interior point of $C$. Clearly (ii)$\Rightarrow$(iii), and (iii)$\Rightarrow$(i) by Lemmas~\ref{minkovski} and the comment following the definition of strict convexity. 
\end{proof}

The next proposition shows the extremal structure of a uniformly convex set has great properties.

\begin{prop}\label{ext_unifconvex}
Let $K\subset X$ be a uniformly convex set of a Banach space $X$ and $Z$ be a subspace of $X^*$. Then
\begin{enumerate}
    \item[(a)] $\ext(K)=\dent(K)$.
    \item[(b)] $\displaystyle\exp_{Z}(K)=\strexp_Z(K) =\{x\in K\ | \exists x^*\in Z\ :\  \sup_Kx^*=x^*(x)>0 \}$.
\end{enumerate}
\end{prop}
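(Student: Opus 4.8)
The plan is to prove Proposition~\ref{ext_unifconvex} by exploiting the strong geometric rigidity that uniform convexity imposes. Recall from Proposition~\ref{unifconvex} and the surrounding discussion that a uniformly convex set $K$ is super weakly compact, hence weakly compact, and that its extreme points are precisely the points of $|\cdot|_K$-norm one (as noted after the definition of strict convexity, since uniform convexity implies strict convexity). I would begin with part~(b), since the chain of inclusions $\strexp_Z(K)\subset\exp_Z(K)$ is trivial, and the inclusion $\{x\in K : \exists x^*\in Z,\ \sup_K x^*=x^*(x)>0\}\subset\strexp_Z(K)$ is the only substantial content; once this is established, part~(a) follows easily.

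For part~(b), the key step is to show that if $x^*\in Z$ attains its supremum over $K$ at a point $x$ with $x^*(x)=\sup_K x^* >0$, then $x^*$ strongly exposes $x$. First I would verify exposition: if $x^*(y)=x^*(x)$ for some $y\in K$ with $y\neq x$, then $\frac{x+y}{2}$ also attains the supremum, but uniform convexity forces $\left|\frac{x+y}{2}\right|_K\leq 1-\delta_K(\|x-y\|)<1$, so $\frac{x+y}{2}$ lies in the interior $(1-\delta)K$; since $x^*(\frac{x+y}2)=\sup_K x^*>0$ and points of norm strictly less than one cannot attain a strictly positive supremum of a functional over $K$ (scaling up by $\tfrac{1}{1-\delta}$ would strictly increase the value while remaining in $K$), we reach a contradiction. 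The strong exposition then follows from the quantitative modulus: for a sequence $(x_n)\subset K$ with $x^*(x_n)\to x^*(x)$, suppose $\|x_n-x\|\geq\varepsilon_0$ along a subsequence; then $\left|\frac{x+x_n}{2}\right|_K\leq 1-\delta_K(\varepsilon_0)$, so $x^*\!\left(\frac{x+x_n}{2}\right)\leq(1-\delta_K(\varepsilon_0))\sup_K x^*$, which stays bounded away from $\sup_K x^*$ and contradicts $x^*(x_n)\to x^*(x)$. This gives $x_n\to x$, i.e. $x\in\strexp_Z(K)$.

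For part~(a), the inclusion $\dent(K)\subset\ext(K)$ is general and needs no argument. For the reverse, let $x\in\ext(K)$; then $|x|_K=1$ by strict convexity. I would produce a slice of small diameter through $x$ using a supporting functional: by weak compactness and the Hahn-Banach separation one finds $x^*\in X^*$ with $x^*(x)=\sup_K x^*$, and by the previous paragraph's modulus estimate the slices $S(K,x^*,\alpha)$ shrink in diameter as $\alpha\to 0^+$, exactly because any point in such a slice is forced by uniform convexity to be close to $x$. Thus $x$ is denting, giving $\ext(K)\subset\dent(K)$.

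The main obstacle I anticipate is the hypothesis $\sup_K x^*>0$ and the handling of points of Minkowski norm strictly less than one, which is needed precisely because a uniformly convex set may have empty interior (as emphasized in the introduction to Section~\ref{seccion:particulares}). The condition $x^*(x)>0$ rules out degenerate functionals and guarantees that the exposing functional genuinely ``sees'' the boundary of $K$; the careful point is that when $0$ is not interior, one must argue directly with the Minkowski functional rather than invoking norm-interior arguments. All the quantitative control, however, is supplied uniformly by the single modulus $\delta_K$, so once the role of the condition $x^*(x)>0$ is correctly pinned down, the estimates are routine.
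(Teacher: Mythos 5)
Your part (b) is correct and is essentially the paper's own argument: the positivity $\sup_K x^*>0$ lets you use the homogeneity $\sup_{(1-\delta)K}x^*=(1-\delta)\sup_Kx^*$, and the modulus $\delta_K$ converts attainment into strong exposure. The only piece you do not address is the inclusion $\exp_Z(K)\subset\{x\in K:\exists x^*\in Z,\ \sup_Kx^*=x^*(x)>0\}$, which is needed to close the cycle of inclusions; it is a one-liner (since $K$ is symmetric, $0\in K$, an exposed point satisfies $x\neq 0$, and then $x^*(x)>x^*(0)=0$), but it should be said.

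Part (a), however, contains a genuine gap. The step ``by weak compactness and the Hahn--Banach separation one finds $x^*\in X^*$ with $x^*(x)=\sup_K x^*$'' is false in general: an extreme point of a uniformly convex set need not be a support point. Hahn--Banach separation requires a closed convex set \emph{disjoint} from $x$ (you cannot separate $x$ from $K$ itself, and $K$ may have empty interior, so there is no interior to separate from), while weak compactness (James) only gives that each functional attains its supremum \emph{somewhere}, not at a prescribed extreme point. Concretely, let $X=\ell_2$, let $A$ be the diagonal operator $Ae_n=ne_n$, and $K=\{x\in\ell_2:\ \sum_n n^2x_n^2\leq 1\}=A^{-1}(B_{\ell_2})$. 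The parallelogram identity together with $\Vert Az\Vert\geq\Vert z\Vert$ gives $\Vert A\tfrac{x+y}{2}\Vert^2\leq 1-\tfrac14\Vert x-y\Vert^2$ for $x,y\in K$, so $K$ is uniformly convex; every boundary point ($\Vert Ax\Vert=1$) is extreme; yet a nonzero $f\in\ell_2$ attains $\sup_K f=\Vert A^{-1}f\Vert$ only at the single point $A^{-2}f/\Vert A^{-1}f\Vert$, so no functional in $X^*$ supports $K$ at an extreme point $x$ with $\sum_n n^4x_n^2=\infty$ (such boundary points exist). Note also that if your step were valid, then by your own part (b) every extreme point of a uniformly convex set would be strongly exposed, a statement the authors explicitly declare open (Remark~\ref{remark:afinfuertementexpuesto}). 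The paper's proof of (a) avoids support functionals altogether: since $|x|_K=1$, the point $x$ lies outside the \emph{closed convex set} $(1-\delta)K$, so Hahn--Banach strictly separates $x$ from $(1-\delta)K$; normalising $\sup_Kx^*=1$, one gets $x\in S(K,x^*,\delta)$ and $S(K,x^*,\delta)\cap(1-\delta)K=\emptyset$, whence $\diam S(K,x^*,\delta)\leq\varepsilon$ because the midpoint of any two points of the slice at distance greater than $\varepsilon$ would be forced into $(1-\delta)K$. Replacing your supporting-functional step by this separation-from-$(1-\delta)K$ argument repairs part (a).
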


\begin{proof}
(a) Let $x\in\ext(K)$ and let $\varepsilon>0$. Take $\delta$ associated to $\varepsilon$ given by the definition of uniform convexity. By the Hahn-Banach theorem, there exists $x^*\in X^*\setminus\{0\}$ such that $$x^*(x)>\sup_{(1-\delta)K}x^*.$$ Since $K$ is symmetric, we can suppose that $\sup_Kx^*=1$. Define a slice of $K$ by $S:=S(x^*,K,\delta)$. We have that $x^*(x)>\sup_{(1-\delta)K}x^*=1-\delta$, so $x\in S$ and $S\cap (1-\delta)K=\emptyset$. Let's show that $\text{diam}(S)\leq\varepsilon$. Suppose on the contrary that there exist $y,z\in S$ such that $\|y-z\|>\varepsilon$. By uniform convexity, it follows that $\frac{y+z}{2}\in (1-\delta)K$. This is a contradiction since $\frac{y+z}{2}\in S$. 

(b) Clearly $\strexp_Z(K)\subset \exp_Z(K)$. Now, assume that $x^*\in Z$ exposes $x$. Then $0=x^*(0)<x^*(x)$, so we get that $x^*$ satisfies our purposes.

Finally, that $x^*\in Z$ satisfies that $\sup_K x^*=x^*(x)>0$, and let us prove that $x^*\in Z$ strongly exposes $x$. To this end, pick $\varepsilon>0$, and let us find a slice of $K$ determined by $x^*$ with diameter smaller than $\varepsilon$. Let $\delta>0$ be associated to $\varepsilon$ in the definition of uniformly convex set. Without loss of generality, we can suppose that $x^*(x)=\sup_Kx^*=1$. Let $y\in K$ so that $\Vert x-y\Vert\geq \varepsilon$. Then $\frac{x+y}{2}\in (1-\delta)K$. Then
$$\frac{x^*(x)+x^*(y)}{2}\leq 1-\delta,$$
from where
$$x^*(y)\leq 1-2\delta.$$
Summarising we have proved that if $\Vert x-y\Vert\geq \varepsilon$ then $y\notin S:=\{z\in K: x^*(z)>1-2\delta\}$, which is a slice of $K$ since $\alpha>0$. This is equivalent to the following: if $x^*(y)\geq 1-2\delta=x^*(x)-2\delta$ then $\Vert x-y\Vert<\varepsilon$. Since $\varepsilon>0$ was arbitrary, we get that $x^*$ strongly exposes $x$ and the proof is complete.

\end{proof}

\begin{coro}\label{cor:carauconvexdenting}
Let $K$ be a uniformly convex subset of a Banach space $X$ and $\mathcal U$ be a CI ultrafilter on an infinite set $I$ be a Banach space. Then $\ext(K_\mathcal U)=\dent(K_\mathcal U)$ and $\exp(K_\mathcal U)=\strexp(K_\mathcal U)$. 
\end{coro}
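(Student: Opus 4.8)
The plan is to deduce both equalities by instantiating the two preceding results with the ambient space $X_\mathcal U$ and the set $K_\mathcal U$. The only genuine point to settle is that $K_\mathcal U$ is itself a uniformly convex subset of $X_\mathcal U$; once this is in place, Proposition~\ref{ext_unifconvex} applies verbatim and yields the statement.

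First I would verify that $K_\mathcal U$ meets the standing requirements of Definition~\ref{def:uniconvex}. It is bounded and convex, since convexity passes to ultrapowers; it is symmetric, because $-K_\mathcal U=(-K)_\mathcal U=K_\mathcal U$ as $K=-K$; and it is closed, because $\mathcal U$ is CI (Proposition~\ref{fact:closed}). Moreover, as $K$ is uniformly convex and $\mathcal U$ is CI, Proposition~\ref{unifconvex} gives that $K_\mathcal U$ is uniformly convex in $X_\mathcal U$ (in fact with the same modulus $\delta_{K_\mathcal U}=\delta_K$).

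Next I would apply Proposition~\ref{ext_unifconvex} to the uniformly convex set $K_\mathcal U$ of the Banach space $X_\mathcal U$, taking the subspace to be the full dual $Z=(X_\mathcal U)^*$. Part~(a) then immediately gives $\ext(K_\mathcal U)=\dent(K_\mathcal U)$. Part~(b), read with $Z=(X_\mathcal U)^*$, gives $\exp_{(X_\mathcal U)^*}(K_\mathcal U)=\strexp_{(X_\mathcal U)^*}(K_\mathcal U)$; recalling that $(X_\mathcal U)^*$-(strongly) exposed points of $K_\mathcal U$ are by definition precisely the (strongly) exposed points of $K_\mathcal U$, this is exactly $\exp(K_\mathcal U)=\strexp(K_\mathcal U)$.

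Since Propositions~\ref{unifconvex} and~\ref{ext_unifconvex} carry all the weight, there is no real obstacle here. The one detail worth stressing is that the subspace $Z$ in Proposition~\ref{ext_unifconvex} is arbitrary, so one is free to take it to be the entire dual $(X_\mathcal U)^*$; consequently the usual difficulty in this paper — that $(X^*)_\mathcal U$ may be strictly smaller than $(X_\mathcal U)^*$ — never enters, because the exposing functionals are permitted to range over all of $(X_\mathcal U)^*$.
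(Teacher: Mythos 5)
Your proposal is correct and follows exactly the paper's own route: Proposition~\ref{unifconvex} (with $\mathcal U$ CI) gives uniform convexity of $K_\mathcal U$, and then Proposition~\ref{ext_unifconvex} applied in $X_\mathcal U$ with $Z=(X_\mathcal U)^*$ yields both equalities. Your extra checks (symmetry, closedness via Proposition~\ref{fact:closed}, and the remark that taking $Z$ to be the full dual sidesteps the $(X^*)_\mathcal U$ versus $(X_\mathcal U)^*$ issue) are sound elaborations of what the paper leaves implicit.
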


\begin{proof}
By Proposition~\ref{unifconvex}, $K_\mathcal U$ is uniformly convex. The result follows from the previous proposition.
\end{proof}

\begin{coro}
Let $K$ be a uniformly convex subset of a Banach space $X$, $\mathcal U$ be a CI ultrafilter on an infinite set $I$ and $x\in K$. Then $x\in\ext(K)$ if and only if $\J(x)\in\ext(K_\mathcal U)$.
\end{coro}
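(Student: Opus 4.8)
The plan is to reduce the statement to two results already established, thereby avoiding any appeal to the Minkowski functional (and hence any interior-point hypothesis on $K$).

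The first step is to observe that for a uniformly convex set the notions of extreme and strongly extreme point coincide. Indeed, Proposition~\ref{ext_unifconvex}(a) gives $\ext(K)=\dent(K)$, while the general chain of inclusions $\dent(K)\subseteq\strext(K)\subseteq\ext(K)$ is valid for any bounded convex set. Combining the two yields
\[
\ext(K)=\dent(K)=\strext(K).
\]

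The second step is to invoke Theorem~\ref{th:jstrext}. Since $K$ is in particular a bounded convex subset of $X$ and $\mathcal U$ is CI, the equivalence (ii)$\iff$(iii) of that theorem, applied with $C=K$, reads
\[
\J(x)\in\ext(K_\mathcal U)\iff x\in\strext(K).
\]
Chaining the two displays then gives $x\in\ext(K)\iff x\in\strext(K)\iff\J(x)\in\ext(K_\mathcal U)$, which is precisely the desired equivalence.

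The only point requiring attention --- and really the crux of the argument --- is the observation in the first step that uniform convexity forces the whole hierarchy of extremal notions between $\dent$ and $\ext$ to collapse; once this is in hand, Theorem~\ref{th:jstrext} closes the loop immediately. An alternative route through Theorem~\ref{ext_ultrapower_unifconv} together with Lemma~\ref{minkovski} would instead compare $|x|_K$ with $|\J(x)|_{K_\mathcal U}$ via the characterisation $\ext(K)=|\cdot|_K^{-1}(\{1\})$ for strictly convex sets, but the equality in Lemma~\ref{minkovski} requires $0$ to be an interior point of $K$, a hypothesis we do not wish to assume; the argument above sidesteps this difficulty entirely.
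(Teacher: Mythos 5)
Your proof is correct and takes essentially the same route as the paper, whose own proof simply cites Proposition~\ref{ext_unifconvex} and Theorem~\ref{th:jstrext}: uniform convexity gives $\ext(K)=\dent(K)$, the general chain $\dent(K)\subseteq\strext(K)\subseteq\ext(K)$ then yields $\ext(K)=\strext(K)$, and the CI equivalence $\J(x)\in\ext(K_\mathcal U)\iff x\in\strext(K)$ closes the argument. You have merely spelled out the collapse of the extremal hierarchy that the paper leaves implicit.
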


\begin{proof}
It follows from Proposition~\ref{ext_unifconvex} and Theorem~\ref{th:jstrext}.
\end{proof}

The following diagram summarises the implications between the properties of $x\in K$ and the ones of $\J(x)\in K_{\mathcal U}$ for a uniformly convex set $K$. The situation is much simpler than in the general case:
\vspace{0.3cm}

\begin{center}
\adjustbox{scale=0.8,center}{%
\begin{tikzcd}[arrows=Rightarrow]
\text{$x\in\exp(K)$} \arrow[r,Leftrightarrow] \arrow[d,Leftrightarrow] & \text{$x\in\strexp(K)$} \arrow[r] \arrow[d,Leftrightarrow] & \text{$x\in\dent(K)$} \arrow[r,Leftrightarrow] \arrow[d,Leftrightarrow] & \text{$x\in\strext(K)$} \arrow[r,Leftrightarrow] \arrow[d,Leftrightarrow] & \text{$x\in\ext(K)$}  \\
\text{$\J(x)\in\exp_{(X^*)_\mathcal U}(K_\mathcal U)$} \arrow[r,Leftrightarrow] &
\text{$\J(x)\in\strexp_{(X^*)_\mathcal U}(K_\mathcal U)$} \arrow[r] & \text{$\J(x)\in\dent_{(X^*)_\mathcal U}(K_\mathcal U)$} \arrow[r,Leftrightarrow] & \text{$\J(x)\in\strext(K_\mathcal U)$} \arrow[r,Leftrightarrow] & \text{$\J(x)\in\ext(K_\mathcal U)$} \arrow[u,Leftrightarrow]\\
\end{tikzcd}}
\end{center}

\begin{rema}\label{remark:afinfuertementexpuesto}
In general we do not know whether every extreme point is a strongly exposed point in a uniformly convex set $K$. However, it turns out that every extreme point is strongly exposed in a sense which depends on the Minkowski functional $|\cdot|_K$. We say that a point is \textit{intrinsically strongly exposed} if there exists a linear functional $f\colon X\longrightarrow \mathbb R$ (not necessarily bounded) so that $f(x)=\sup_K f=1$ and that, for every $\varepsilon>0$, there exists $\delta>0$ so that $f(y)>1-\delta\Rightarrow \Vert x-y\Vert<\varepsilon$.

Let us prove that if $x\in K$ satisfies that $|x|_K=1$ then it is intrinsically strongly exposed. Define $Y=(\text{span}(K),|\cdot|_K)$. It is well known that $Y$ is a Banach space such that $B_Y=K$. By the Hahn-Banach theorem, there exists $f\in S_{Y^*}$ such that $f(x)=|x|_K=1$. Let us prove that $f$ strongly exposes $x$ in the above sense. To do so, pick $\varepsilon>0$ and take $\delta$ of the definition of uniformly convex set. Now if $y\in K$ satisfies that $f(y)>1-\delta$ we get
$$1-\frac{\delta}{2}<\frac{f(x+y)}{2}\leq \frac{|x+y|_K}{2}=\left|\frac{x+y}{2}\right|_K,$$
which means that $\frac{x+y}{2}\notin (1-\delta)K$. This implies that $\Vert x-y\Vert<\varepsilon$. The arbitrariness of $\varepsilon$ implies that every point of $\{x\in K: |x|_K=1\}$ is strongly exposed (actually, it is uniformly strongly exposed).

Note that $f$ is $|\cdot|_K$-continuous. However, in the general case, $f$ is not necessarily $\|\cdot\|$-continuous (unless $\text{span}(K)$ is closed since in that case the norm induced by $X$ on $Y$ and $|\cdot|_K$ are equivalent by the open mapping theorem).
\end{rema}

\begin{rema}
In spite of the previous remark, we can at least prove that $\strexp(K)$ is dense in $\ext(K)$ in a uniformly convex set $K$. Indeed, since $K$ is weakly compact, $K=\cconv(\strexp(K))$ and so $\dent(K)\subset \overline{\strexp(K)}$. By Proposition~\ref{ext_unifconvex}, $\ext(K)=\dent(K)$ provided that $K$ is uniformly convex.
\end{rema}

Note that, by applying Corollary \ref{cor:carauconvexdenting} and Theorem \ref{prop:caraextrefiltrofijo}, we can now give a characterisation of the denting points of a uniformly convex set.

\begin{theo}\label{theo:dentinguconvex}
Let $X$ be a Banach space, $K$ be a uniformly convex subset of $X$ and $\mathcal U$ be a free ultrafilter over $I$. Let $(x_i)_\mathcal U\in C_\mathcal U$. The following assertions are equivalent:
\begin{itemize}
    \item [(i)] $(x_i)_\mathcal U\in\dent(K_\mathcal U)$,
    \item [(ii)] for any $(y_i)_\mathcal U,(z_i)_\mathcal U\in C_\mathcal U$ so that $\lim_\mathcal U\Vert x_i-\frac{y_i+z_i}{2}\Vert=0$, it follows that $\lim_\mathcal U \Vert x_i-y_i\Vert=0$ and $\lim_\mathcal U\Vert x_i-z_i\Vert=0$.
\end{itemize}
\end{theo}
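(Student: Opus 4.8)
The plan is to read off both equivalences from results already established, so the proof collapses to chaining two ``if and only if'' statements. The crucial observation is that condition (ii) is, word for word, condition (ii) of Theorem~\ref{prop:caraextrefiltrofijo}. Since a uniformly convex set is by definition bounded, closed and convex, the hypotheses of that theorem are met with $C=K$, and therefore condition (ii) holds if and only if $(x_i)_\mathcal U\in\ext(K_\mathcal U)$. This first step is immediate and requires nothing beyond invoking Theorem~\ref{prop:caraextrefiltrofijo}.

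Next I would use that uniform convexity collapses the notions of extreme and denting point in the ultrapower. Concretely, Corollary~\ref{cor:carauconvexdenting} asserts $\ext(K_\mathcal U)=\dent(K_\mathcal U)$, so $(x_i)_\mathcal U\in\dent(K_\mathcal U)$ if and only if $(x_i)_\mathcal U\in\ext(K_\mathcal U)$; that is, (i) is equivalent to $(x_i)_\mathcal U\in\ext(K_\mathcal U)$. Combining the two equivalences yields
\[ \text{(i)}\iff (x_i)_\mathcal U\in\ext(K_\mathcal U)\iff\text{(ii)},\]
which is exactly the assertion of the theorem.

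The only genuine point to watch is the hypothesis on $\mathcal U$. Theorem~\ref{prop:caraextrefiltrofijo} is valid for every free ultrafilter, but Corollary~\ref{cor:carauconvexdenting} (and, behind it, Proposition~\ref{unifconvex}, which supplies the uniform convexity of $K_\mathcal U$, together with Proposition~\ref{fact:closed}, which gives that $K_\mathcal U$ is closed) rests on $\mathcal U$ being countably incomplete. Thus the chain above goes through once $\mathcal U$ is taken to be CI, the standing assumption of this section, and I would state the theorem accordingly. Apart from this bookkeeping there is no real obstacle: no new estimate is needed, since the substance of the argument has already been carried out in the uniform-convexity transfer of Proposition~\ref{unifconvex} and in Proposition~\ref{ext_unifconvex}, both of which feed into Corollary~\ref{cor:carauconvexdenting}.
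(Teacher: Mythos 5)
Your proof is correct and is exactly the paper's argument: the paper obtains this theorem precisely by combining Theorem~\ref{prop:caraextrefiltrofijo} (identifying condition (ii) with $(x_i)_\mathcal U\in\ext(K_\mathcal U)$) with Corollary~\ref{cor:carauconvexdenting} ($\ext(K_\mathcal U)=\dent(K_\mathcal U)$ for uniformly convex $K$). Your remark on the ultrafilter hypothesis is also a good catch: the statement says only ``free'', but Corollary~\ref{cor:carauconvexdenting} (resting on Propositions~\ref{fact:closed} and~\ref{unifconvex}) requires $\mathcal U$ to be countably incomplete, so the CI assumption should indeed be read into the statement.
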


Recall that $F$ is a face of a convex set $C$ if for any $x,y\in C$ such that $(x,y)\cap F\neq\emptyset$, one has that $[x,y]\subset F$. A face $F$ of $C$ is proper if $F\neq\emptyset$ and $F\neq C$.

\begin{lema}\label{lemma:face}
Let $C$ be a symmetric bounded closed convex subset of a Banach space $X$. If $F$ is a proper face of $C$ then $F\subset |\cdot|_C^{-1}(\{1\})$. If moreover $C$ is strictly convex, then $F$ is a singleton. 
\end{lema}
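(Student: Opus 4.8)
The plan is to prove the two assertions in turn, the first one being the heart of the matter; the second will then follow formally from it, together with strict convexity and the convexity of $F$.

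For the inclusion $F\subset|\cdot|_C^{-1}(\{1\})$ I would argue by contradiction, showing that a point of $F$ lying strictly inside $C$ forces $F=C$, against properness. So suppose $x\in F$ with $r:=|x|_C<1$. Recall that, $C$ being bounded, symmetric, closed and convex, $|\cdot|_C$ is a norm on $\operatorname{span}(C)$ which is even and satisfies $|u|_C\leq 1\iff u\in C$; all the values below are finite since $x,w\in C\subset\operatorname{span}(C)$. Fix any $w\in C$ with $w\neq x$ and push $x$ away from $w$ by setting $\varepsilon:=\frac{1-r}{1+r}>0$ and $x':=(1+\varepsilon)x-\varepsilon w$. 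By homogeneity and subadditivity,
$$|x'|_C\leq(1+\varepsilon)|x|_C+\varepsilon|w|_C\leq(1+\varepsilon)r+\varepsilon=r+\varepsilon(1+r)=1,$$
so $x'\in C$. Moreover $x'=w$ would force $x=w$, hence $x'\neq w$, and a direct computation gives $x=\frac{1}{1+\varepsilon}x'+\frac{\varepsilon}{1+\varepsilon}w$, a strict convex combination; thus $x\in(x',w)$. Since $x\in F$, the defining property of a face yields $[x',w]\subset F$, and in particular $w\in F$. As $w\in C$ was arbitrary (the case $w=x$ being trivial), we obtain $C\subset F$, i.e. $F=C$, contradicting that $F$ is proper. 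Hence $|x|_C=1$ for every $x\in F$.

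For the second assertion, assume in addition that $C$ is strictly convex and that $F$ contains two distinct points $x,y$. By the first part $|x|_C=|y|_C=1$, so strict convexity gives $\left|\frac{x+y}{2}\right|_C<1$. On the other hand $F$ is convex (being a face), so $\frac{x+y}{2}\in F$, and the first part forces $\left|\frac{x+y}{2}\right|_C=1$ -- a contradiction. Therefore $F$ has at most one point, and being a proper (hence nonempty) face it is a singleton. Equivalently, one may invoke the fact recalled before the statement that $\ext(C)=|\cdot|_C^{-1}(\{1\})$ when $C$ is strictly convex: the first part then reads $F\subset\ext(C)$, and a convex set contained in $\ext(C)$ cannot contain two distinct points.

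The only genuinely delicate step is the construction of $x'$: I expect the choice of $\varepsilon$ making $x'\in C$ to be where the hypotheses really enter, namely $|x|_C<1$ together with the boundedness of $C$ (so that $|\cdot|_C$ is a bona fide norm on $\operatorname{span}(C)$ and the triangle inequality above is both finite and effective). The remaining care is bookkeeping: ruling out the degenerate cases $w=x$ and $x'=w$, and using in the second assertion that a face is convex.
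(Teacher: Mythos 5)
Your proof is correct; the second assertion is handled exactly as in the paper (midpoint plus strict convexity), while the first assertion takes a genuinely different, slightly more direct route. The paper argues in two steps pivoting through the origin: first $0\notin F$ (otherwise $0\in(-w,w)$ for every $w\in C$, whence $F=C$), and then, if some $x\in F$ had $|x|_C<1$, the face property applied to the segment $(0,x/|x|_C)$ (whose endpoint lies in $C$ by closedness) would force $0\in F$. You instead absorb every $w\in C$ into $F$ in one stroke, via the auxiliary point $x'=(1+\varepsilon)x-\varepsilon w$ with $\varepsilon=\frac{1-r}{1+r}$, which your gauge computation keeps inside $C$ and which exhibits $x$ as an interior point of $[x',w]$. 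For $x=0$ your construction reduces to the paper's first step (then $x'=-w$), so your argument subsumes the paper's; what it buys is a uniform one-step proof with no special role for the origin, at the price of the small bookkeeping you already note ($w\neq x$, $x'\neq w$). One caveat common to both proofs: the claim $\frac{x+y}{2}\in F$ uses that a face is convex. That is part of the standard definition of a face, but it does not follow from the definition recalled in the paper --- for a strictly convex $C$, any two-point subset of $\ext(C)$ satisfies that definition vacuously --- so convexity of $F$ should be read as an implicit hypothesis; this is a defect of the paper's formulation, not of your argument relative to the paper's.
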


\begin{proof} Note first that $0\notin F$, otherwise it follows easily that $C=F$, a contradiction. 
Now, suppose there exists $x\in F$ such that $|x|_C<1$. Thus $x$ belongs to the non-trivial segment $(0, x/|x|_C)$ in $C$. It follows that $0\in F$, so we obtain again that $C=F$, a contradiction.

Finally, assume that $C$ is strictly convex and take $x,y\in F$. We have that $|x|_C=|y|_C=1$ and, since $\frac{x+y}{2}\in F$, we also have that $\left|\frac{x+y}{2}\right|_C=1$. Then $x=y$ by strict convexity, so $F$ is a singleton.
\end{proof}

\begin{thm}
Let $\mathcal U$ be a free ultrafilter on an infinite set $I$. Let $K\subset X$ be a uniformly convex set such that $K_\mathcal U$ separates points of $(X^*)_\mathcal U$. Then 
\[(\exp(K))_\mathcal U\subset\exp_{(X^*)_\mathcal U}(K_\mathcal U)=\strexp_{(X^*)_\mathcal U}(K_\mathcal U).\]
\end{thm}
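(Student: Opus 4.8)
The plan is to get the equality for free from Proposition~\ref{ext_unifconvex} and to reduce the inclusion to producing a single non-degenerate exposing functional in $(X^*)_\mathcal U$; the separation hypothesis will be used precisely to guarantee this non-degeneracy.

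First I would note that $K_\mathcal U$ is again uniformly convex by Proposition~\ref{unifconvex}, so that Proposition~\ref{ext_unifconvex}(b), applied to $K_\mathcal U$ with the subspace $Z=(X^*)_\mathcal U\subseteq (X_\mathcal U)^*$, yields at once the asserted equality
\[
\exp_{(X^*)_\mathcal U}(K_\mathcal U)=\strexp_{(X^*)_\mathcal U}(K_\mathcal U)
\]
together with the useful description
\[
\exp_{(X^*)_\mathcal U}(K_\mathcal U)=\Big\{(z_i)_\mathcal U\in K_\mathcal U:\ \exists\,(z_i^*)_\mathcal U\in (X^*)_\mathcal U,\ \textstyle\sup_{K_\mathcal U}(z_i^*)_\mathcal U=(z_i^*)_\mathcal U((z_i)_\mathcal U)>0\Big\}.
\]
Hence the whole theorem reduces to the inclusion, and by this description it is enough, for a given $(x_i)_\mathcal U\in(\exp(K))_\mathcal U$ with all $x_i\in\exp(K)$, to exhibit $(x_i^*)_\mathcal U\in (X^*)_\mathcal U$ with $\sup_{K_\mathcal U}(x_i^*)_\mathcal U=(x_i^*)_\mathcal U((x_i)_\mathcal U)>0$.

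To construct such a functional I would, for each $i$, invoke Proposition~\ref{ext_unifconvex}(b) for $K$ itself to choose $x_i^*\in X^*$ with $\sup_K x_i^*=x_i^*(x_i)>0$, rescaled so that $\|x_i^*\|=1$. The family is then bounded, so $(x_i^*)_\mathcal U\in(X^*)_\mathcal U$ is well defined with $\|(x_i^*)_\mathcal U\|=\lim_\mathcal U\|x_i^*\|=1$. By Lemma~\ref{sup},
\[
\sup_{K_\mathcal U}(x_i^*)_\mathcal U=\lim_\mathcal U\sup_K x_i^*=\lim_\mathcal U x_i^*(x_i)=(x_i^*)_\mathcal U((x_i)_\mathcal U),
\]
so this functional automatically attains its supremum over $K_\mathcal U$ at $(x_i)_\mathcal U$; the only thing left to verify is that the common value $t:=\lim_\mathcal U x_i^*(x_i)\ge 0$ is strictly positive.

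The hard part is exactly this non-degeneracy, and it is here that the separation hypothesis is essential. If $t=0$ then $\sup_{K_\mathcal U}(x_i^*)_\mathcal U=0$, and since $K$, and hence $K_\mathcal U$, is symmetric we also get $\inf_{K_\mathcal U}(x_i^*)_\mathcal U=-\sup_{K_\mathcal U}(x_i^*)_\mathcal U=0$; thus $(x_i^*)_\mathcal U$ vanishes identically on $K_\mathcal U$ while having norm $1$, which contradicts the assumption that $K_\mathcal U$ separates the points of $(X^*)_\mathcal U$. Therefore $t>0$, and $(x_i^*)_\mathcal U$ witnesses, through the description of the first step, that $(x_i)_\mathcal U\in\exp_{(X^*)_\mathcal U}(K_\mathcal U)$, which finishes the proof. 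I would stress that without separation the rescaled exposing functionals may satisfy $\lim_\mathcal U x_i^*(x_i)=0$ --- a genuine possibility, since a uniformly convex set can have empty interior and then $\sup_K x_i^*$ need not stay bounded away from $0$ as $i$ varies --- in which case $(x_i^*)_\mathcal U$ would fail to expose $(x_i)_\mathcal U$.
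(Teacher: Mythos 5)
Your proof is correct, and its core coincides with the paper's: both arguments take coordinatewise norm-one exposing functionals $x_i^*$, form $(x_i^*)_\mathcal U\in(X^*)_\mathcal U$, use Lemma~\ref{sup} to see that this functional attains its supremum over $K_\mathcal U$ at $(x_i)_\mathcal U$, and invoke the separation hypothesis together with $\|(x_i^*)_\mathcal U\|=1$ to exclude the degenerate case $\sup_{K_\mathcal U}(x_i^*)_\mathcal U=0$ (you phrase this via symmetry --- the supremum and infimum are both zero, so the functional vanishes on $K_\mathcal U$ --- which is exactly the paper's ``if $F=K_\mathcal U$ then $(x_i^*)_\mathcal U$ vanishes on $K_\mathcal U$'' step). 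Where you genuinely diverge is the conclusion. The paper forms the face $F=\{(y_i)_\mathcal U\in K_\mathcal U:\langle(x_i^*)_\mathcal U,(y_i)_\mathcal U\rangle=\sup_{K_\mathcal U}(x_i^*)_\mathcal U\}$ and applies Lemma~\ref{lemma:face} (a proper face of a strictly convex set is a singleton) to conclude that $(x_i)_\mathcal U$ is exposed, upgrading to strongly exposed only through the stated equality; you instead re-read Proposition~\ref{ext_unifconvex}(b) applied to the uniformly convex set $K_\mathcal U$ with $Z=(X^*)_\mathcal U$, whose third description says that any point at which some functional of $Z$ attains a strictly positive supremum is automatically $Z$-strongly exposed. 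Your route is slightly more economical: it reuses the proposition already needed for the equality, dispenses with Lemma~\ref{lemma:face} and with strict convexity of $K_\mathcal U$ altogether, and exhibits the strongly exposing functional explicitly; the paper's face argument, in exchange, isolates a clean geometric fact that holds under mere strict convexity. Your closing observation about why separation is genuinely needed (when $K$ has empty interior, $\sup_K x_i^*$ can tend to $0$ along the index set, making $(x_i^*)_\mathcal U$ degenerate on $K_\mathcal U$) is also accurate and matches the role the hypothesis plays in the paper.
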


\begin{proof}
First, note that the equality follows from Proposition~\ref{ext_unifconvex}. Let $(x_i)_\mathcal U\in (\exp(K))_\mathcal U$. We can obviously suppose that $x_i\in \exp(K)$ for all $i\in I$. Let $x_i^*\in S_{X^*}$ that exposes $x_i$. Define a face of $K_\mathcal U$ by $$F=\left\{(y_i)_\mathcal U\in K_\mathcal U\ |\ \langle (x_i^*)_\mathcal U, (y_i)_\mathcal U\rangle=\sup_{K_\mathcal U}(x_i^*)_\mathcal U\right\}.$$ It is clear that $(x_i)_\mathcal U\in F$ and, in particular, $F\neq\emptyset$. Since $K_\mathcal U$ is strictly convex, in order to apply the previous lemma, we need to show that $F$ is a proper face. If $F=K$, then ${(x^*_i)_\mathcal U}_{|K_\mathcal U}=0$ since $0\in K$. By hypothesis, it follows that $(x^*_i)_\mathcal U=0$, which is a contradiction since $\|(x^*_i)_\mathcal U\|=1$. By Lemma~\ref{lemma:face}, we conclude that $F=\{(x_i)_\mathcal U\}$, i.e. $(x_i)_\mathcal U$ is exposed by $(x_i^*)_\mathcal U$.
\end{proof}

\textbf{Acknowledgements.} The authors are very grateful to the referees for their comments and particularly for the argument given by one of the referees leading to an improvement of Corollary \ref{improvement} in the case that the ultrafilter is CI (see Theorem \ref{th:jstrexp}). Moreover it allowed us to obtain Theorem \ref{theo:expequistronglyCI}.

This work was produced during the PhD thesis of the second author which is very grateful to his supervisor Matías Raja for his support and for initiating him to the theory of super weakly compact sets. The second author is also very grateful to the University of Zaragoza where this work was initiated.

The authors also want to thank Antonio Avil\'es and Matías Raja for fruitful conversations.

The research of L. Garc\'ia-Lirola was supported by the grants MTM2017-83262-C2-2-P and Fundaci\'on S\'eneca Regi\'on de Murcia 20906/PI/18.

The research of G. Grelier was supported by the Grants of Ministerio de Economía, Industria y Competitividad MTM2017-83262-C2-2-P; Fundación Séneca Región de Murcia 20906/PI/18; and by MICINN 2018 FPI fellowship with reference PRE2018-083703, associated to grant MTM2017-83262-C2-2-P. 

The research of A. Rueda Zoca was supported by Juan de la Cierva-Formaci\'on fellowship FJC2019-039973, by MTM2017-86182-P (Government of Spain, AEI/FEDER, EU), by MICINN (Spain) Grant PGC2018-093794-B-I00 (MCIU, AEI, FEDER, UE), by Fundaci\'on S\'eneca, ACyT Regi\'on de Murcia grant 20797/PI/18, by Junta de Andaluc\'ia Grant A-FQM-484-UGR18 and by Junta de Andaluc\'ia Grant FQM-0185.


\begin{thebibliography}{999999}

\bibitem {All2016} T.~A.~Abrahamsen, J.~Langemets and V.~Lima, \textit{Almost square Banach spaces}, J. Math. Anal. Appl. \textbf{434}, 2 (2016), 1549--1565.

\bibitem{AlKa}  F.~Albiac and N.~J.~Kalton, \textit{Topics in Banach space theory}, Graduate Texts in
Mathematics 233, Springer, New York, 2006.

\bibitem {Beauzamy} B.~Beauzamy, \textit{Opérateurs uniformément convexifiants}, Stud. Math \textbf{57} (1977), 103--139.

\bibitem {bksw} D.~Bilik, V.~Kadets, R.~Shvidkoy and D.~Werner, \textit{Narrow operators and the Daugavet property for ultraproducts}, Positivity \textbf{9}, 1 (2005), 46--62. 

\bibitem {Bourgain} J.~Bourgain, \textit{On dentability and the Bishop-Phelps property}, Israel J. Math. \textbf{28}, 4 (1977), 265--271.

\bibitem {ccgmr} B. Cascales, R. Chiclana, L. Garc\'ia-Lirola, M. Mart\'in and A. Rueda Zoca, \textit{On strongly norm attaining Lipschitz maps}, J. Funct. Anal. \textbf{277} (2019), 1677--1717.

\bibitem{SWC1} L.~Cheng, Q.~Cheng, S.~Luo, K.~Tu, and J.~Zhang, \textit{On super weak compactness of subsets and its equivalences in Banach spaces}, J. Convex Anal., \textbf{25}, 3 (2018), 899--926.

\bibitem{ChengOtro} L.~Cheng, Q.~Cheng, J.~Wang, \textit{A note on absolute uniform retracts, uniform approximation
property and super weakly compact sets of Banach spaces}, Acta Math. Sin.
(Eng. Ser.) \textbf{37} (2021), 731-739.

\bibitem{BST} M.~Fabian, P.~Habala, P.~Hájek, V.~Montesinos, and V.~Zizler, \textit{Banach space theory: The basis for linear and nonlinear analysis}, CMS Books in Mathematics/Ouvrages de Mathématiques de la SMC, Springer, New York, 2011. 

\bibitem{tree} G.~Grelier, M.~Raja, \textit{On uniformly convex functions}, J. Math. Anal. Appl. \textbf{505}, 1 (2022), Paper No. 125442, 25 p.

\bibitem {hardtke} J.~D.~Hardtke, \textit{Summands in locally almost square and locally octahedral spaces}, Acta Comment. Univ. Tartu. Math. \textbf{22},  1 (2018), 149--162.

\bibitem {hj} K. Hrbacek and T. Jech, \textit{Introduction to set theory. 3rd, revised and expanded ed.}, Pure and Applied Mathematics, Marcel Dekker 220. New York, NY: Marcel Dekker (ISBN 0-8247-7915-0/hbk). ix, 291 p. (1999). 

\bibitem {HWW1993} P.~Harmard, D.~Werner and W.~Werner, \textit{$M$-ideals in Banach spaces and Banach algebras}, Lecture Notes in Math. 1547, Springer, Berlin 1993. 


\bibitem {Heinrich} S.~Heinrich, \textit{Finite representability and super-ideals of operators}, Dissert. Math. (Rozprawy Mat.) \textbf{172}, 37 (1980).

\bibitem {heinrichsurvey} S.~Heinrich, \textit{Ultraproducts in Banach space theory}, J. Reine Angew. Math. \textbf{313} (1980), 72--104.

\bibitem {kw} V.~Kadets and D.~Werner, \textit{A Banach space with the Schur and the Daugavet property}, Proc. Amer. Math. Soc. \textbf{132}, 6 (2004), 1765--1773.

\bibitem{LancienRaja} G.~Lancien and M.~Raja, \textit{Nonlinear aspects of super weakly compact sets}, to appear in \textit{Annales de l'Institut Fourier}. Available at arXiv.org with reference \href{https://arxiv.org/abs/2003.01030}{  arXiv:2003.01030v4}.


\bibitem {linds1963} J.~Lindenstrauss, \textit{On operators which attain their norm}, Israel J. Math. \textbf{1} (1963), 139--148.

\bibitem{dentable} M.~Raja, \textit{Finitely dentable functions, operators and sets}, J. Conv. Anal. \textbf{15}, 2 (2008), 219--233.

\bibitem{RajaSWCG} M.~Raja, \textit{Super WCG Banach spaces}, J. Math. Anal. Appl \textbf{439}, 1 (2016), 183-196.

\bibitem{stern} J.~Stern, \textit{Ultrapowers and local properties of Banach spaces}, Trans. Amer. Math. Soc. \textbf{240}, 6 (1978), 231--252.

\bibitem {Talponen} J.~Talponen, \textit{Uniform-to-proper duality of geometric properties of Banach spaces and their ultrapowers} Math. Scand. \textbf{121}, 1 (2017), 111–120. 

\bibitem{Tingley} D.~Tingley, \textit{Uniform dentability, uniform smoothability and approximations to convex sets}, Rocky Mountain J. Math. \textbf{22} (1992), no. 4, 1565–1587.

\bibitem{Tu} K.~Tu, \textit{Convexification of super weakly compact sets and measure of super weak noncompactness}, Proc. Amer. Math. Soc. \textbf{149}, 6 (2021), 2531--2538.











\end{thebibliography}
\end{document}